\documentclass[reqno,9pt]{amsart}

\usepackage{amsfonts,color}
\usepackage{amssymb,amsmath}
\usepackage[all]{xy}
\usepackage[dvips]{graphics}

\makeatletter
\newcommand{\leqnomode}{\tagsleft@true}
\makeatother

% THEOREM Environments ---------------------------------------------------

 \newtheorem{theorem}{Theorem}%[section]
 \newtheorem{corollary}[theorem]{Corollary}
 \newtheorem{lemma}[theorem]{Lemma}
 \newtheorem{proposition}[theorem]{Proposition}
 \theoremstyle{definition}
 
 \theoremstyle{remark}
 \newtheorem{remark}[theorem]{Remark}

 \numberwithin{equation}{section}
 \numberwithin{theorem}{section}

%%% ----------------------------------------------------------------------

\begin{document}

\title[$q$-concave operators and $q$-concave Banach lattices]
{Optimal domain of $q$-concave operators and vector measure
representation of $q$-concave Banach lattices}

\author[O.\ Delgado]{O.\ Delgado}
\address{Departamento de Matem\'atica Aplicada I, E.\ T.\ S.\ de Ingenier\'ia
de Edificaci\'on, Universidad de Sevilla, Avenida de Reina Mercedes,
4 A,  Sevilla 41012, Spain}
\email{\textcolor[rgb]{0.00,0.00,0.84}{olvido@us.es}}

\author[E.\ A.\ S\'{a}nchez P\'{e}rez]{E.\ A.\ S\'{a}nchez P\'{e}rez}
\address{Instituto Universitario de Matem\'atica Pura y Aplicada,
Universitat Polit\`ecnica de Val\`encia, Camino de Vera s/n, 46022
Valencia, Spain.}
\email{\textcolor[rgb]{0.00,0.00,0.84}{easancpe@mat.upv.es}}

\subjclass[2010]{Primary 47B38, 46G10. Secondary 46E30, 46B42.}

\keywords{Banach lattices, $q$-concave operators, quasi-Banach
function spaces, vector measures defined on a $\delta$-ring.}

\thanks{The first author gratefully acknowledges the support of the Ministerio de Econom\'{\i}a y Competitividad
(project \#MTM2012-36732-C03-03) and the Junta de Andaluc\'{\i}a
(projects FQM-262 and FQM-7276), Spain.}
\thanks{The second author acknowledges with thanks the support of the Ministerio de Econom\'{\i}a y Competitividad
(project \#MTM2012-36740-C02-02), Spain.}

\date{\today}

\maketitle

%%%%%%%%%%%%%%%%%%%%%%%%%%%%%%%%%%%%%%%%%%%%%%%%%%%%%%%%

\begin{abstract}
Given a Banach space valued $q$-concave linear operator $T$ defined
on a $\sigma$-order continuous quasi-Banach function space, we
provide a description of the optimal domain of $T$ preserving
$q$-concavity, that is, the largest $\sigma$-order continuous
quasi-Banach function space to which $T$ can be extended as a
$q$-concave operator. We show in this way the existence of maximal extensions for $q$-concave operators. As an application, we show a representation
theorem for $q$-concave Banach lattices through spaces of integrable
functions with respect to a vector measure. This result culminates a series of representation theorems for Banach lattices using vector measures that have been obtained in the last twenty years.
\end{abstract}

%%%%%%%%%%%%%%%%%%%%%%%%%%%%%%%%%%%%%%%%%%%%%%%%%%%%%%%%

%%% ----------------------------------------------------------------------

\section{Introduction}\label{SEC: Introduction}

Let $X(\mu)$ be a $\sigma$-order continuous quasi-Banach function
space related to a positive measure $\mu$ on a measurable space
$(\Omega,\Sigma)$ such that there exists $g\in X(\mu)$ with $g>0$
$\mu$-a.e.\ and let $T\colon X(\mu)\to E$ be a continuous linear
operator with values in a Banach space $E$. Considering the
$\delta$-ring $\Sigma_{X(\mu)}$ of all sets $A\in\Sigma$ satisfying
that $\chi_A\in X(\mu)$ and the vector measure
$m_T\colon\Sigma_{X(\mu)}\to E$ given by $m_T(A)=T(\chi_A)$, it
follows that the space $L^1(m_T)$ of integrable functions with
respect to $m_T$ is the optimal domain of $T$ preserving continuity.
That is, the largest $\sigma$-order continuous quasi-Banach function
space to which $T$ can be extended as a continuous operator still
with values in $E$. Moreover, the extension of $T$ to $L^1(m_T)$ is
given by the integration operator $I_{m_T}$. This fact was
originally proved in \cite[Corollary 3.3]{curbera-ricker1} for
Banach function spaces $X(\mu)$ with $\mu$ finite and
$\chi_\Omega\in X(\mu)\subset L^1(\mu)$, in which case
$\Sigma_{X(\mu)}$ coincides with the $\sigma$-algebra $\Sigma$. The
extension for Banach function spaces (without extra assumptions) is
deduced from \cite[Proposition 4]{calabuig-delgado-sanchezperez}.
The jump to quasi-Banach function spaces appears in \cite[Theorem
4.14]{okada-ricker-sanchezperez} for the case when $\mu$ is finite
and $\chi_\Omega\in X(\mu)$ and in \cite{delgado-sanchezperez} for
the general case.

Some effort has been made in recent years to solve several versions
of the following general \emph{problem:} Suppose that the operator
$T$ has a property P. Is there an optimal domain for $T$ preserving
P? that is, is there a function space $Z$ such that $T$ can be
extended to $Z$ as an operator with the property P in such a way
that $Z$ is the largest space for which this holds? And in this
case, which is the relation among $Z$ and $m_T$? The answer to the first question is in
general no. For example, in \cite{okada} it is proved that for
compacness or weak compacness $T$ has an optimal domain only in the
case when $I_{m_T}$ is compact or weakly compact, respectively. In
the same line it is shown in
\cite{calabuig-jimenezfernadez-juan-sanchezperez} that $T$ has an
optimal domain for AM-compacness if and only if $I_{m_T}$ is
AM-compact. However, other properties have got  positive answers to
our problem, see \cite{calabuig-jimenezfernadez-juan-sanchezperez}
for narrow operators, \cite{calabuig-delgado-sanchezperez} for
order-w continuous or $Y(\eta)$-extensible operators and
\cite{delgado2} for positive order continuous operators. Also in
\cite{calabuig-jimenezfernadez-juan-sanchezperez} the problem is
studied for Dunford-Pettis operators, but although some partial
results are shown there, the question of the existence of a maximal
extension is still open.

In this paper we analyze this problem for the case of $q$-concave
operators, obtaining a positive answer. Namely, if $T$ is $q$-concave
we show how to compute explicitly the largest quasi-Banach function
space to which $T$ can be extended preserving $q$-concavity
(Corollary \ref{COR: qConcaveOptimalDomain}). Even more, we prove
that this optimal domain is in fact the $q$-concave core of the
space $L^1(m_T)$ and the maximal extension is given by the
integration operator $I_{m_T}$. These results are obtained as a
particular case of the more general Theorem \ref{THM:
(p,q)PowerConcave-Factorization} which gives the optimal domain for
a class of operators (called $(p,q)$-power-concave) which contains
the $q$-concave operators.

As an application we obtain an improvement in some sense of the
Maurey-Rosenthal factorization of $q$-concave operators acting in
$q$-convex Banach function spaces (Corollary \ref{COR:
MaureyRosenthalFactorization}). The reader can find information
about this nowadays classical topic for example in \cite{defant},
\cite{defant-sanchezperez} and the references therein.

In the last section we provide a new representation theorem for
$q$-concave Banach lattices in terms of a vector measure. This type
of representation theorems has its origin in \cite[Theorem
8]{curbera}, where it is proved that every order continuous Banach
lattice $F$ with a weak unit is order isometric to a space
$L^1(\nu)$ of a vector measure $\nu$ defined on a $\sigma$-algebra.
Later in \cite[Proposition
2.4]{fernandez-mayoral-naranjo-saez-sanchezperez} it is shown that
if moreover $F$ is $p$-convex then it is order isometric to $L^p(m)$
for another vector measure $m$. Similar results work for $F$ without
weak unit but in this case the vector measures used in the
representations of $F$ are defined in a $\delta$-ring, see
\cite[Theorem 5]{delgado-juan} and \cite[Theorem
10]{calabuig-juan-sanchezperez}. Also there are representation
theorems for $F$ replacing $\sigma$-order continuity by the Fatou
property, in this case through spaces of weakly integrable
functions, see \cite{curbera-ricker2}, \cite{curbera-ricker3},
\cite{delgado-juan} and \cite{juan-sanchezperez}. For
$p,q\in[1,\infty)$, in Theorem \ref{THM:
BanachLattice-Representation} we obtain that every $q$-concave and
$p$-convex Banach lattice is order isometric to a space $L^p(m)$ of
a vector measure $m$ defined on a $\delta$-ring whose integration
operator $I_{m_T}$ is $\frac{q}{p}$-concave. The converse is also
true. In particular, every $q$-concave Banach lattice is order
isometric to a space $L^1(m)$ of a vector measure $m$ having a
$q$-concave integration operator.

%%% ----------------------------------------------------------------------

\section{Preliminaries}\label{SEC: Preliminaries}

In this section we establish the notation and present the basic
results on quasi-Banach function spaces (including the proof of some
of them for completeness) and on vector measure integration, which
will be used through the whole paper.

Let $(\Omega,\Sigma)$ be a fixed measurable space. For a measure
$\mu\colon\Sigma\to[0,\infty]$, we denote by $L^0(\mu)$ the space of
all $\Sigma$--measurable real valued functions on $\Omega$, where
functions which are equal $\mu$--a.e.\ are identified.

Given two set functions $\mu,\lambda\colon\Sigma\to[0,\infty]$ we
will write $\lambda\ll\mu$ if $\mu(A)=0$ implies $\lambda(A)=0$. If
$\lambda\ll\mu$ and $\mu\ll\lambda$ we will say that $\mu$ and
$\lambda$ are \emph{equivalent}. If
$\mu,\lambda\colon\Sigma\to[0,\infty]$ are two measures with
$\lambda\ll\mu$, then the map $[i]\colon L^0(\mu)\to L^0(\lambda)$
which takes a $\mu$--a.e.\ class in $L^0(\mu)$ represented by $f$
into the $\lambda$--a.e.\ class represented by the same $f$, is a
well defined linear map. In order to simplify notation we will write
$[i](f)=f$. Note that if $\lambda$ and $\mu$ are equivalent then
$L^0(\mu)=L^0(\lambda)$ and $[i]$ is the identity map $i$.

\subsection{Quasi-Banach function spaces}

Let $X$ be a real vector space and $\Vert\cdot\Vert_X$ a
\emph{quasi-norm} on $X$, that is a function
$\Vert\cdot\Vert_X\colon X\to [0,\infty)$ satisfying the following
conditions:
\begin{itemize}\setlength{\leftskip}{-2.5ex}\setlength{\itemsep}{1ex}
\item[(i)] $\Vert x\Vert_X=0$ if and only if $x=0$,

\item[(ii)] $\Vert \alpha x\Vert_X=\vert\alpha\vert\cdot\Vert x\Vert_X$
for all $\alpha\in\mathbb{R}$ and $x\in X$, and

\item[(iii)] there is a constant $K\ge1$ such that $\Vert
x+y\Vert_X\le K(\Vert x\Vert_X+\Vert y\Vert_X)$ for all $x,y\in X$.
\end{itemize}
For $0<r\le1$ being such that $K=2^{\frac{1}{r}-1}$, it follows that
\begin{equation}\label{EQ: r-sum}
\Big\Vert\sum_{j=1}^nx_j\Big\Vert_X\le
4^{\frac{1}{r}}\Big(\sum_{j=1}^n\Vert
x_j\Vert_X^r\Big)^{\frac{1}{r}}
\end{equation}
for every finite subset $(x_j)_{j=1}^n\subset X$, see \cite[Lemma
1.1]{kalton-peck-roberts}. The quasi-norm $\Vert\cdot\Vert_X$
induces a metrizable vector topology on $X$ where a base of
neighborhoods of $0$ is given by sets of the form $\{x\in X:\, \Vert
x\Vert_X\le \frac{1}{n}\}$. So, a sequence $(x_n)$ converges to $x$
in $X$ if and only if $\Vert x-x_n\Vert_X\to0$. If such topology is
complete then $X$ is said to be a \emph{quasi-Banach space}
(\emph{Banach space} if $K=1$).

Having in mind the inequality \eqref{EQ: r-sum}, standard arguments
show the next result.

\begin{proposition}\label{PROP: quasi-normCompleteness}
The following statements are equivalent:
\begin{itemize}\setlength{\leftskip}{-2.5ex}\setlength{\itemsep}{1ex}
\item[(a)] $X$ is complete.

\item[(b)] For every $0<r'\le r$ ($r$ as in \eqref{EQ: r-sum}) it follows that if
$(x_n)\subset X$ is such that $\sum\Vert x_n\Vert_X^{r'}<\infty$
then $\sum x_n$ converges in $X$.

\item[(c)] There exists $r'>0$ satisfying that if
$(x_n)\subset X$ is such that $\sum\Vert x_n\Vert_X^{r'}<\infty$
then $\sum x_n$ converges in $X$.
\end{itemize}
\end{proposition}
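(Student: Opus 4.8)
The plan is to establish the cyclic chain of implications $(a)\Rightarrow(b)\Rightarrow(c)\Rightarrow(a)$. The implication $(b)\Rightarrow(c)$ is immediate: since $0<r\le1$, the value $r'=r$ is admissible in $(b)$, and it witnesses the existential statement $(c)$.

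For $(a)\Rightarrow(b)$, fix $0<r'\le r$ and $(x_n)\subset X$ with $\sum\Vert x_n\Vert_X^{r'}<\infty$, and set $s_n=\sum_{j=1}^nx_j$. For $m<n$ I would apply \eqref{EQ: r-sum} to $s_n-s_m=\sum_{j=m+1}^nx_j$ and then use the elementary inequality $\big(\sum a_j^{r}\big)^{1/r}\le\big(\sum a_j^{r'}\big)^{1/r'}$, valid for $a_j\ge0$ because $r'\le r$, obtaining
\begin{equation*}
\Vert s_n-s_m\Vert_X\le 4^{\frac1r}\Big(\sum_{j=m+1}^n\Vert x_j\Vert_X^{r}\Big)^{\frac1r}\le 4^{\frac1r}\Big(\sum_{j=m+1}^n\Vert x_j\Vert_X^{r'}\Big)^{\frac1{r'}}.
\end{equation*}
Since $\sum\Vert x_j\Vert_X^{r'}<\infty$, the right-hand side tends to $0$ as $m,n\to\infty$, so $(s_n)$ is a Cauchy sequence and hence convergent by completeness; that is, $\sum x_n$ converges in $X$.

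For $(c)\Rightarrow(a)$, let $r'>0$ be as in $(c)$ and let $(x_n)$ be a Cauchy sequence in $X$. Since the quasi-norm metrizes the topology of $X$, I would extract a subsequence $(x_{n_k})$ with $\Vert x_{n_{k+1}}-x_{n_k}\Vert_X^{r'}\le 2^{-k}$ for every $k$. Then $\sum_k\Vert x_{n_{k+1}}-x_{n_k}\Vert_X^{r'}<\infty$, so by $(c)$ the series $\sum_k(x_{n_{k+1}}-x_{n_k})$ converges in $X$; as its $K$-th partial sum equals $x_{n_{K+1}}-x_{n_1}$, the subsequence $(x_{n_k})$ converges to some $x\in X$. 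A routine estimate using condition (iii) (or \eqref{EQ: r-sum}) then shows that a Cauchy sequence possessing a convergent subsequence converges to the same limit, so $x_n\to x$ and $X$ is complete.

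None of these steps is a serious obstacle, the argument being the standard one adapted to the quasi-normed setting. The two points that require a little attention are the passage from the exponent $r'$ to $r$ in $(a)\Rightarrow(b)$ — where one must recall that a quasi-norm only satisfies \eqref{EQ: r-sum} and not the triangle inequality — and, in $(c)\Rightarrow(a)$, the standard fact that for a quasi-norm a Cauchy sequence with a convergent subsequence is itself convergent, which again has to be derived from condition (iii) rather than taken for granted.
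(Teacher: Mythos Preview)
Your proof is correct and is precisely the ``standard argument'' the paper alludes to; the paper does not spell out a proof of this proposition at all, merely stating that it follows from inequality \eqref{EQ: r-sum} by standard arguments. Your chain $(a)\Rightarrow(b)\Rightarrow(c)\Rightarrow(a)$, with the $\ell^{r'}\hookrightarrow\ell^{r}$ inequality in the first step and the usual Cauchy-subsequence trick in the last, is exactly what one would expect here.
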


Note that if a series $\sum x_n$ converges in $X$ then
\begin{equation}\label{EQ: norm-sum}
\Big\Vert\sum x_n\Big\Vert_X\le 4^{\frac{1}{r}}K\Big(\sum\Vert
x_n\Vert_X^r\Big)^{\frac{1}{r}},
\end{equation}
where $r$ is as in \eqref{EQ: r-sum}. By using the map $|||\cdot|||$
given in \cite[Theorem 1.2]{kalton-peck-roberts}, it is routine to
check that if $x_n\to x$ in $X$ then
\begin{equation}\label{EQ: Limit-quasi-norm}
4^{-\frac{1}{r}}\limsup\Vert x_n\Vert_X\le\Vert
x\Vert_X\le4^{\frac{1}{r}}\liminf\Vert x_n\Vert_X.
\end{equation}

Also note that a linear map $T\colon X\to Y$ between quasi-Banach
spaces is continuous if and only if there exists a constant $M>0$
such that $\Vert Tx\Vert_Y\le M\Vert x\Vert_X$ for all $x\in X$, see
\cite[p.\,8]{kalton-peck-roberts}.

By a \emph{quasi-Banach function space} (briefly, quasi-B.f.s.)\ we
mean a quasi-Banach space $X(\mu)\subset L^0(\mu)$ satisfying that
if $f\in X(\mu)$ and $g\in L^0(\mu)$ with $|g|\le|f|$ $\mu$--a.e.\
then $g\in X(\mu)$ and $\Vert g\Vert_{X(\mu)}\le\Vert
f\Vert_{X(\mu)}$. If $X(\mu)$ is a Banach space we will refer it as
a \emph{Banach function space} (briefly, B.f.s.). In particular, a
quasi-B.f.s.\ is a quasi-Banach lattice for the $\mu$-a.e.\
pointwise order, in which the convergence in quasi-norm of a
sequence implies the convergence $\mu$-a.e.\ for some subsequence.
Let us prove this important fact.

\begin{proposition}\label{PROP: mu-a.e.ConvergenceSubsequence}
If $f_n\to f$ in a quasi-B.f.s.\ $X(\mu)$, then there exists a
subsequence $f_{n_j}\to f$ $\mu$--a.e.
\end{proposition}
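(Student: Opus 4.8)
The plan is to mimic the classical proof that norm convergence in $L^p$ yields a.e.-convergent subsequences, using the order structure of $X(\mu)$ together with the series criterion from Proposition~\ref{PROP: quasi-normCompleteness}. First I would pass to a rapidly converging subsequence: since $f_n\to f$ in $X(\mu)$, choose indices $n_1<n_2<\cdots$ so that $\Vert f_{n_{j+1}}-f_{n_j}\Vert_{X(\mu)}\le 2^{-j/r}$, where $r$ is the exponent from \eqref{EQ: r-sum}; then $\sum_j\Vert f_{n_{j+1}}-f_{n_j}\Vert_{X(\mu)}^{r}\le\sum_j 2^{-j}<\infty$.

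Next I would build a dominating function inside $X(\mu)$. Consider the partial sums $g_N=\sum_{j=1}^N|f_{n_{j+1}}-f_{n_j}|$. By the lattice ideal property of a quasi-B.f.s., each $g_N\in X(\mu)$, and by \eqref{EQ: norm-sum} (applied to the summands $|f_{n_{j+1}}-f_{n_j}|$, whose quasi-norms are controlled by the tail of a summable $r$-th-power series) the sequence $(g_N)$ is quasi-norm Cauchy; by completeness and Proposition~\ref{PROP: quasi-normCompleteness} the series $g:=\sum_{j\ge1}|f_{n_{j+1}}-f_{n_j}|$ converges in $X(\mu)$ to some $g\in X(\mu)$. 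Now $g_N\uparrow$ pointwise $\mu$-a.e.\ to $h:=\sup_N g_N\in[0,\infty]$, and since $g_N\to g$ in $X(\mu)$, a further subsequence of $(g_N)$ converges $\mu$-a.e.; because $(g_N)$ is monotone increasing, the whole sequence converges $\mu$-a.e.\ to $g$, so $h=g<\infty$ $\mu$-a.e. (Alternatively one invokes Fatou-type monotonicity: $\Vert g_N\Vert_{X(\mu)}$ is bounded, hence $g_N$ cannot increase to $+\infty$ on a set of positive measure without violating the ideal property combined with \eqref{EQ: Limit-quasi-norm}.) Consequently $\sum_{j\ge1}|f_{n_{j+1}}(\omega)-f_{n_j}(\omega)|<\infty$ for $\mu$-a.e.\ $\omega$, so the telescoping series $f_{n_1}+\sum_{j\ge1}(f_{n_{j+1}}-f_{n_j})$ converges absolutely $\mu$-a.e.; denote its pointwise sum by $\tilde f$. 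Then $f_{n_j}\to\tilde f$ $\mu$-a.e.

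Finally I would identify $\tilde f$ with $f$. The partial sums of the telescoping series are exactly $f_{n_j}$, and they converge to $\tilde f$ both in $X(\mu)$ (as a convergent series) and $\mu$-a.e.; on the other hand $f_{n_j}\to f$ in $X(\mu)$ by hypothesis. By uniqueness of limits in the quasi-norm topology, $\tilde f=f$ as elements of $X(\mu)\subset L^0(\mu)$, i.e.\ $\mu$-a.e. Hence $f_{n_j}\to f$ $\mu$-a.e., as desired.

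The main obstacle is the step extracting $\mu$-a.e.\ convergence of the monotone partial sums $g_N$ to an honest finite-valued function $g\in X(\mu)$: one must justify that quasi-norm convergence of $(g_N)$ forces its $\mu$-a.e.\ pointwise limit (which a priori exists only in $[0,\infty]$ by monotonicity) to be finite a.e.\ and to agree with the quasi-norm limit. This is where the ideal property of a quasi-B.f.s.\ and the two-sided estimate \eqref{EQ: Limit-quasi-norm} are essential — without them one cannot rule out the limit being $+\infty$ on a positive-measure set — and it is precisely the point where a genuinely nontrivial argument (rather than a routine $L^p$-style computation) is required. Everything else is bookkeeping with \eqref{EQ: r-sum}–\eqref{EQ: Limit-quasi-norm}.
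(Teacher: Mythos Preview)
Your telescoping approach is a legitimate alternative to the paper's Borel--Cantelli argument, but the step you yourself flag as the ``main obstacle'' is a genuine gap, and neither of your two proposed resolutions works. Your first argument --- ``since $g_N\to g$ in $X(\mu)$, a further subsequence of $(g_N)$ converges $\mu$-a.e.'' --- is precisely the proposition you are proving, so invoking it here is circular. Your alternative (``$\Vert g_N\Vert_{X(\mu)}$ is bounded, hence $g_N$ cannot increase to $+\infty$ on a positive-measure set by the ideal property combined with \eqref{EQ: Limit-quasi-norm}'') is too vague to count as a proof: a general quasi-B.f.s.\ need not contain the characteristic function of an arbitrary set of positive measure, so there is no direct way to turn ``$g_N\uparrow+\infty$ on $A$'' into a quasi-norm blow-up, and \eqref{EQ: Limit-quasi-norm} says nothing about pointwise limits.

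The clean fix is the following elementary observation, which uses only the ideal property: if $0\le g_N\uparrow$ and $g_N\to g$ in $X(\mu)$, then $g_N\le g$ $\mu$-a.e.\ for every $N$. Indeed, for $M\ge N$ one has $g_N-g\le g_M-g$, hence $(g_N-g)^+\le(g_M-g)^+\le|g_M-g|$, so $\Vert(g_N-g)^+\Vert_{X(\mu)}\le\Vert g_M-g\Vert_{X(\mu)}\to0$ and thus $(g_N-g)^+=0$ $\mu$-a.e. This immediately gives $h=\sup_N g_N\le g<\infty$ $\mu$-a.e., after which the rest of your argument goes through. The paper's proof takes a different route --- it chooses the subsequence so that $\Vert f-f_{n_j}\Vert_{X(\mu)}\le 2^{-j}$, forms the tail sums $g_m=\sum_{j\ge m}|f-f_{n_j}|$ in $X(\mu)$, and bounds $\chi_{\limsup_j\{|f-f_{n_j}|>1/N\}}$ by $Ng_m$ --- but that bound also implicitly requires the quasi-norm limit $g_m$ to dominate the pointwise partial sums, i.e.\ exactly the observation above; the paper simply does not isolate this step.
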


\begin{proof}
Let $r$ be as in \eqref{EQ: r-sum}. We can take a strictly
increasing sequence $(n_j)_{j\ge1}$ such that $\Vert
f-f_{n_j}\Vert_{X(\mu)}\le\frac{1}{2^j}$. For every $m\ge1$, since
$$
\sum_{j\ge m}\Vert f-f_{n_j}\Vert_{X(\mu)}^r\le \sum_{j\ge
m}\frac{1}{2^{jr}}<\infty,
$$
by Proposition \ref{PROP: quasi-normCompleteness} and \eqref{EQ:
norm-sum}, it follows that $g_m=\sum_{j\ge m}|f-f_{n_j}|$ converges
in $X(\mu)$ and $\Vert g_m\Vert_{X(\mu)}\le
4^{\frac{1}{r}}K(\sum_{j\ge m}\frac{1}{2^{jr}})^{\frac{1}{r}}$. Fix
$N\ge 1$ and let $A_j^N=\{\omega\in\Omega: \vert
f(\omega)-f_{n_j}(\omega)\vert>\frac{1}{N}\}$. Since
$$
\chi_{\cap_{m\ge1}\cup_{j\ge m}A_j^N}\le\chi_{\cup_{j\ge
m}A_j^N}\le\sum_{j\ge m}\chi_{A_j^N}\le N\sum_{j\ge m}\vert
f-f_{n_j}\vert=Ng_m,
$$
then
$$
\Vert\chi_{\cap_{m\ge1}\cup_{j\ge m}A_j^N}\Vert_{X(\mu)}\le N\Vert
g_m\Vert_{X(\mu)}\le4^{\frac{1}{r}}NK\Big(\sum_{j\ge
m}\frac{1}{2^{jr}}\Big)^{\frac{1}{r}}.
$$
Taking $m\to\infty$ we have that $\Vert\chi_{\cap_{m\ge1}\cup_{j\ge
m}A_j^N}\Vert_{X(\mu)}=0$ and so $\mu(\cap_{m\ge1}\cup_{j\ge
m}A_j^N)=0$. Then $\mu(\cup_{N\ge1}\cap_{m\ge1}\cup_{j\ge
m}A_j^N)=0$, from which $f_{n_j}\to f$ $\mu$-a.e.
\end{proof}

A quasi-B.f.s.\ $X(\mu)$ is \emph{$\sigma$-order continuous} if for
every $(f_n)\subset X(\mu)$ with $f_n\downarrow0$ $\mu$-a.e.\ it
follows that $\Vert f_n\Vert_X\downarrow0$. It has the
\emph{$\sigma$-Fatou property} if for every sequence $(f_n)\subset
X$ such that $0\le f_n\uparrow f$ $\mu$-a.e.\ and $\sup_n\Vert
f_n\Vert_X<\infty$ we have  that $f\in X$ and $\Vert
f_n\Vert_X\uparrow\Vert f\Vert_X$.

A similar argument to that given in
\cite[p.\,2]{lindenstrauss-tzafriri} for Banach lattices shows that
every positive linear operator between quasi-Banach lattices is
automatically continuous. In particular, all inclusions between
quasi-B.f.s.\ are continuous.

The intersection $X(\mu)\cap Y(\mu)$ and the sum $X(\mu)+Y(\mu)$ of
two quasi-B.f.s.'\ (B.f.s.')\ $X(\mu)$ and $Y(\mu)$ are
quasi-B.f.s.'\ (B.f.s.')\ endowed respectively with the quasi-norms
(norms)
$$
\Vert f\Vert_{X(\mu)\cap Y(\mu)}=\max\big\{\Vert
f\Vert_{X(\mu)},\Vert f\Vert_{Y(\mu)}\big\}
$$
and
$$
\Vert f\Vert_{X(\mu)+Y(\mu)}=\inf\big(\Vert f_1\Vert_{X(\mu)}+\Vert
f_2\Vert_{Y(\mu)}\big),
$$
where the infimum is taken over all possible representations
$f=f_1+f_2$ $\mu$-a.e.\ with $f_1\in X(\mu)$ and $f_2\in Y(\mu)$.
The $\sigma$-order continuity is also preserved by this operations:
if $X(\mu)$ and $Y(\mu)$ are $\sigma$-order continuous then
$X(\mu)\cap Y(\mu)$ and $X(\mu)+Y(\mu)$ are $\sigma$-order
continuous. Detailed proofs of these facts can be found in
\cite{delgado-sanchezperez}, see also \cite[\S\,3, Theorem
1.3]{bennett-sharpley} for the standard parts.

Let $p\in(0,\infty)$. The \emph{$p$-power} of a quasi-B.f.s.\
$X(\mu)$ is the quasi-B.f.s.
$$
X(\mu)^p=\big\{f\in L^0(\mu):\, |f|^p\in X(\mu)\big\}
$$
endowed with the quasi-norm
$$
\Vert
f\Vert_{X(\mu)^p}=\Vert\,|f|^p\,\Vert_{X(\mu)}^{\frac{1}{p}}.
$$
The reader can find a complete explanation of the space $X^p(\mu)$
for instance in \cite[\S\,2.2]{okada-ricker-sanchezperez} for the
case when $\mu$ is finite and $\chi_\Omega\in X(\mu)$. The proofs
given there, with the natural modifications, work in our general
case. However, note that the notation is different: our $p$-powers
here are the $\frac{1}{p}$-th powers there. This standard space can
be found in different sources, unfortunately, notation is not
exactly the same in all of them.

The following remark collects some results on the space $X(\mu)^p$
which will be used in the next sections. First, recall that a
quasi-B.f.s.\ $X(\mu)$ is \emph{$p$-convex} if there exists a
constant $C>0$ such that
$$
\Big\Vert\Big(\sum_{j=1}^n|f_j|^p\Big)^{\frac{1}{p}}\Big\Vert_{X(\mu)}\le
C\,\Big(\sum_{j=1}^n\Vert f_j\Vert_{X(\mu)}^p\Big)^{\frac{1}{p}}
$$
for every finite subset $(f_j)_{j=1}^n\subset X(\mu)$. The smallest
constant satisfying the previous inequality is called the
\emph{p-convexity constant} of $X(\mu)$ and is denoted by
$M^{(p)}[X(\mu)]$.

\begin{remark}\label{REM: XpResults}
Let $X(\mu)$ be a quasi-B.f.s. The following statements hold:
\begin{itemize}\setlength{\leftskip}{-2.5ex}\setlength{\itemsep}{1ex}
\item[(a)] $X(\mu)^p$ is
$\sigma$-order continuous if and only if $X(\mu)$ is $\sigma$-order
continuous.

\item[(b)] If $\chi_\Omega\in X(\mu)$ and $0<p\le q<\infty$ then $X(\mu)^q\subset X(\mu)^p$.

\item[(c)] If $X(\mu)$ is a B.f.s.\ then $X(\mu)^p$ is $p$-convex.

\item[(d)] If $X(\mu)$ is a B.f.s.\ and $p\ge1$ then $\Vert \cdot\Vert_{X(\mu)^p}$ is a norm
and so $X(\mu)^p$ is a B.f.s.

\item[(e)] If $X(\mu)$ is $\frac{1}{p}$-convex with $M^{(\frac{1}{p})}[X(\mu)]=1$ then
$\Vert \cdot\Vert_{X(\mu)^p}$ is a norm
and so $X(\mu)^p$ is a B.f.s.
\end{itemize}
\end{remark}

Let $T\colon X(\mu)\to E$ be a linear operator defined on a
quasi-B.f.s.\ $X(\mu)$ and with values in a quasi-Banach space $E$.
For $q\in(0,\infty)$, the operator $T$ is said to be
\emph{$q$-concave} if there exists a constant $C>0$ such that
$$
\Big(\sum_{j=1}^n\Vert T(f_j)\Vert_E^q\Big)^{\frac{1}{q}}\le
C\,\Big\Vert\Big(\sum_{j=1}^n|f_j|^q\Big)^{\frac{1}{q}}\Big\Vert_{X(\mu)}
$$
for every finite subset $(f_j)_{j=1}^n\subset X(\mu)$. A
quasi-B.f.s.\ $X(\mu)$ is \emph{q-concave} if the identity map
$i\colon X(\mu)\to X(\mu)$ is q-concave.

Note that if $T$ is $q$-concave then it is $p$-concave for all
$p>q$. A proof of this fact can be found in \cite[Proposition
2.54.(iv)]{okada-ricker-sanchezperez} for the case when $\mu$ is
finite and $\chi_\Omega\in X(\mu)$. An adaptation of this proof to
our context works.

\begin{proposition}\label{PROP: q-concaveImpliesSigmaoc}
If $X(\mu)$ is a $q$-concave quasi-B.f.s.\ then it is $\sigma$-order
continuous.
\end{proposition}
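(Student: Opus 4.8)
The plan is to prove the contrapositive. Suppose $X(\mu)$ is \emph{not} $\sigma$-order continuous and let $C>0$ be a constant witnessing the $q$-concavity of $X(\mu)$, i.e.\ of the identity map $i\colon X(\mu)\to X(\mu)$. The crux will be to manufacture, out of this failure of $\sigma$-order continuity, a disjoint sequence $(v_k)_{k\ge1}\subset X(\mu)$ with $0\le v_k\le z$ for some fixed $z\in X(\mu)$ and $\inf_k\|v_k\|_{X(\mu)}>0$. Such a sequence yields a contradiction at once: since the $v_k$ are pairwise disjoint, at $\mu$-a.e.\ point at most one of them is nonzero, so $\big(\sum_{k=1}^n|v_k|^q\big)^{1/q}=\bigvee_{k=1}^n v_k\le z$ $\mu$-a.e.\ for every $n$; applying $q$-concavity to $v_1,\dots,v_n$ gives $\big(\sum_{k=1}^n\|v_k\|_{X(\mu)}^q\big)^{1/q}\le C\,\|z\|_{X(\mu)}$, and letting $n\to\infty$ forces $\|v_k\|_{X(\mu)}\to0$, against $\inf_k\|v_k\|_{X(\mu)}>0$. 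Observe that this last step is valid for every $q\in(0,\infty)$, so no preliminary reduction on $q$ (via $q$-concave $\Rightarrow$ $p$-concave for $p>q$) is needed.

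It remains to produce such a sequence. By hypothesis there is $(f_n)\subset X(\mu)$ with $f_n\downarrow0$ $\mu$-a.e.\ and $\|f_n\|_{X(\mu)}\ge\alpha$ for all $n$, for some $\alpha>0$; since $(\|f_n\|_{X(\mu)})_n$ is non-increasing we may take $\alpha=\lim_n\|f_n\|_{X(\mu)}$ and, after replacing $(f_n)$ by a tail, also assume $\|f_n\|_{X(\mu)}\le2\alpha$. I would distinguish two cases according to whether or not $\|f_1\wedge\tfrac1k\|_{X(\mu)}\to0$ as $k\to\infty$. In the first case, fix $k_0$ with $\|f_1\wedge\tfrac1{k_0}\|_{X(\mu)}<\tfrac{\alpha}{2K}$ (where $K\ge1$ is the constant in the quasi-triangle inequality for $\|\cdot\|_{X(\mu)}$) and set $h_n:=f_n\chi_{S_n}$ with $S_n:=\{f_n>1/k_0\}$. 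Using $f_{n+1}\le f_n$ one checks that $h_n\downarrow0$ $\mu$-a.e., $0\le h_n\le f_1$, $\tfrac1{k_0}\chi_{S_n}\le h_n$ with $(S_n)$ decreasing and $\mu(\bigcap_n S_n)=0$, and, writing $f_n=h_n+f_n\chi_{\Omega\setminus S_n}$ with $f_n\chi_{\Omega\setminus S_n}\le f_1\wedge\tfrac1{k_0}$, that $\|h_n\|_{X(\mu)}>\tfrac{\alpha}{2K}$. Thus the failure of $\sigma$-order continuity is now witnessed by a sequence $(h_n)$ supported on a shrinking family $(S_n)$ with null intersection and bounded away from $0$ in quasi-norm; from here the disjoint sequence $(v_k)$, with $v_k\le f_1$ and supported off the supports of $v_1,\dots,v_{k-1}$, is built by a gliding-hump choice of indices $n_1<n_2<\cdots$ that captures a fixed positive fraction of $\|h_{n_k}\|_{X(\mu)}$ at each step. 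The second case, $\|f_1\wedge\tfrac1k\|_{X(\mu)}\not\to0$, is reduced to the first after replacing $(f_n)$ by the witness $(f_1\wedge\tfrac1k)_k$, which in addition has uniformly small amplitude, and arguing analogously.

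The only non-routine part, and the main obstacle, is precisely this extraction of a disjoint, order bounded, norm-non-null sequence from a failure of $\sigma$-order continuity. The obvious candidates — the consecutive differences $f_n-f_{n+1}$, the amplitude layers $f_1\wedge\tfrac1k-f_1\wedge\tfrac1{k+1}$, or the set differences $f_1\chi_{S_n\setminus S_{n+1}}$ — are all disjointifiable but useless, since these ``thin'' pieces are themselves norm null; the selection must instead seize a fixed fraction of the \emph{bulk} of $h_{n_k}$ at each step, and it is exactly there that the hypothesis of non-$\sigma$-order continuity is consumed. This is the (quasi-)Banach-lattice fact that a quasi-B.f.s.\ which is not $\sigma$-order continuous contains an order bounded disjoint positive sequence bounded away from $0$ in quasi-norm; it can be established by adapting the finite-measure treatment in \cite{okada-ricker-sanchezperez} (or the classical disjointification arguments for Banach lattices) to the present generality, the quasi-norm playing no essential role beyond affecting constants. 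With $(v_k)$ in hand and $z:=f_1$, the first paragraph closes the proof.
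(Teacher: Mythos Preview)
Your route is genuinely different from the paper's. The paper argues directly: after reducing to $q\ge1$ (using that $q$-concavity implies $p$-concavity for $p>q$), it takes any $(f_n)$ with $f_n\downarrow0$ $\mu$-a.e.\ and observes that for every increasing $(n_k)$ the nonnegative telescoping differences $f_{n_k}-f_{n_{k+1}}$ satisfy, by $q$-concavity together with the pointwise bound $(\sum a_k^q)^{1/q}\le\sum a_k$ valid for $q\ge1$,
\[
\Big(\sum_{k=1}^m\|f_{n_k}-f_{n_{k+1}}\|_{X(\mu)}^q\Big)^{1/q}
\le C\,\Big\|\sum_{k=1}^m(f_{n_k}-f_{n_{k+1}})\Big\|_{X(\mu)}
=C\,\|f_{n_1}-f_{n_{m+1}}\|_{X(\mu)}\le C\,\|f_{n_1}\|_{X(\mu)}.
\]
This uniform bound forces $(f_n)$ to be Cauchy (a non-Cauchy sequence would yield a subsequence along which the left-hand side diverges), and the limit is identified as $0$ via Proposition~\ref{PROP: mu-a.e.ConvergenceSubsequence}. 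No disjointification, no auxiliary structural lemma.

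Your contrapositive strategy is sound in principle, and the contradiction you draw from an order bounded disjoint sequence with norms bounded away from $0$ is clean (and, as you note, uniform in $q>0$). The gap is in the extraction of that sequence, which you correctly flag as the main obstacle but do not actually carry out. Your Case~1 gliding hump requires, for each chosen $n_k$, an index $n_{k+1}$ making $\|h_{n_k}\chi_{S_{n_{k+1}}}\|_{X(\mu)}$ small; but $h_{n_k}\chi_{S_m}\downarrow0$ $\mu$-a.e.\ only yields norm convergence under precisely the $\sigma$-order continuity you are assuming to fail, so there is no reason the hump glides. Your Case~2 reduction is circular: after replacing $(f_n)$ by $(f_1\wedge\tfrac1k)_k$, checking the Case~1 hypothesis for the new sequence asks whether $\|(f_1\wedge1)\wedge\tfrac1j\|_{X(\mu)}=\|f_1\wedge\tfrac1j\|_{X(\mu)}\to0$, which Case~2 explicitly denies. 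The lemma you ultimately invoke---that a $\sigma$-Dedekind complete (quasi-)Banach lattice failing $\sigma$-order continuity contains such a disjoint sequence---is true, but its proof is not the two-case split you sketch and is more delicate than you suggest; deferring it to an adaptation of \cite{okada-ricker-sanchezperez} leaves the argument incomplete. The paper's telescoping proof bypasses all of this in a few lines.
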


\begin{proof}
Since $q$-concavity implies $p$-concavity for every $q<p$, we only
have to consider the case $q\ge1$. Denote by $C$ the $q$-concavity
constant of $X(\mu)$ and consider $(f_n)\subset X(\mu)$ such that
$f_n\downarrow0$ $\mu$-a.e. For every strictly increasing
subsequence $(n_k)$ we have that
\begin{eqnarray*}
\Big(\sum_{k=1}^m\Vert
f_{n_k}-f_{n_{k+1}}\Vert_{X(\mu)}^q\Big)^{\frac{1}{q}} & \le &
C\,\Big\Vert\Big(\sum_{k=1}^m|f_{n_k}-f_{n_{k+1}}|^q\Big)^{\frac{1}{q}}\Big\Vert_{X(\mu)}
\\ & \le & C\,\Big\Vert\sum_{k=1}^m|f_{n_k}-f_{n_{k+1}}|\,\Big\Vert_{X(\mu)} \\ & = &
C\,\Vert f_{n_1}-f_{n_{m+1}}\Vert_{X(\mu)} \\ & \le & C\,\Vert
f_{n_1}\Vert_{X(\mu)}
\end{eqnarray*}
for all $m\ge1$. Then, $(f_n)$ is a Cauchy sequence in $X(\mu)$, as
in other case we can find $\delta>0$ and two subsequences $(n_k)$,
$(m_k)$ such that $n_k<m_k<n_{k+1}<m_{k+1}$ and $\delta<\Vert
f_{n_k}-f_{m_k}\Vert_{X(\mu)}\le\Vert
f_{n_k}-f_{n_{k+1}}\Vert_{X(\mu)}$ for all $k$, which is a
contradiction. Let $h\in X(\mu)$ be such that $f_n\to h$ in
$X(\mu)$. From Proposition \ref{PROP:
mu-a.e.ConvergenceSubsequence}, there exists a subsequence
$f_{n_j}\to h$ $\mu$--a.e.\ and so $h=0$ $\mu$-a.e. Hence, $\Vert
f_n\Vert_{X(\mu)}\downarrow0$.
\end{proof}

\begin{lemma}\label{LEM: q-concaveOperatorOnX+Y}
Let $X(\mu)$ and $Y(\mu)$ be two quasi-B.f.s.'\ and consider a
linear operator $T\colon X(\mu)+Y(\mu)\to E$ with values in a
quasi-Banach space $E$. The operator $T$ is $q$-concave if and only
if both $T\colon X(\mu)\to E$ and $T\colon Y(\mu)\to E$ are
$q$-concave.
\end{lemma}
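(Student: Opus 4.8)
The plan is to prove the two implications separately, noting first that the forward direction is essentially trivial. If $T\colon X(\mu)+Y(\mu)\to E$ is $q$-concave with constant $C$, then since the inclusions $X(\mu)\hookrightarrow X(\mu)+Y(\mu)$ and $Y(\mu)\hookrightarrow X(\mu)+Y(\mu)$ are norm-decreasing (by definition of the sum quasi-norm, $\Vert f\Vert_{X(\mu)+Y(\mu)}\le\Vert f\Vert_{X(\mu)}$ for $f\in X(\mu)$, and similarly for $Y$), the same constant $C$ witnesses $q$-concavity of the restrictions $T\colon X(\mu)\to E$ and $T\colon Y(\mu)\to E$: for a finite family $(f_j)\subset X(\mu)$ one has $(\sum_j\Vert Tf_j\Vert_E^q)^{1/q}\le C\,\Vert(\sum_j|f_j|^q)^{1/q}\Vert_{X(\mu)+Y(\mu)}\le C\,\Vert(\sum_j|f_j|^q)^{1/q}\Vert_{X(\mu)}$, using that $(\sum_j|f_j|^q)^{1/q}\in X(\mu)$ because $X(\mu)$ is a quasi-B.f.s.\ closed under the lattice operations involved. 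The same works verbatim for $Y(\mu)$.

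The substantive direction is the converse: assume $T\colon X(\mu)\to E$ and $T\colon Y(\mu)\to E$ are $q$-concave, with constants $C_X$ and $C_Y$ respectively, and deduce $q$-concavity of $T$ on $X(\mu)+Y(\mu)$. The key idea is a splitting argument. Take a finite family $(f_j)_{j=1}^n\subset X(\mu)+Y(\mu)$ and, for each $\varepsilon>0$, choose decompositions $f_j=g_j+h_j$ with $g_j\in X(\mu)$, $h_j\in Y(\mu)$, and $\Vert g_j\Vert_{X(\mu)}+\Vert h_j\Vert_{Y(\mu)}\le\Vert f_j\Vert_{X(\mu)+Y(\mu)}+\varepsilon$. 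Then $\Vert Tf_j\Vert_E\le K(\Vert Tg_j\Vert_E+\Vert Th_j\Vert_E)$, where $K$ is the quasi-norm constant of $E$, so using the $r$-inequality \eqref{EQ: r-sum} in $\ell^q$-type form (or, if $q\ge1$, just the triangle inequality in $\ell^q$) one bounds $(\sum_j\Vert Tf_j\Vert_E^q)^{1/q}$ by a constant times $(\sum_j\Vert Tg_j\Vert_E^q)^{1/q}+(\sum_j\Vert Th_j\Vert_E^q)^{1/q}$, hence by a constant times $C_X\Vert(\sum_j|g_j|^q)^{1/q}\Vert_{X(\mu)}+C_Y\Vert(\sum_j|h_j|^q)^{1/q}\Vert_{Y(\mu)}$.

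The remaining point is to dominate each of $\Vert(\sum_j|g_j|^q)^{1/q}\Vert_{X(\mu)}$ and $\Vert(\sum_j|h_j|^q)^{1/q}\Vert_{Y(\mu)}$ by (a constant times) $\Vert(\sum_j|f_j|^q)^{1/q}\Vert_{X(\mu)+Y(\mu)}+\varepsilon'$. For this I would use the lattice inequality $(\sum_j|g_j|^q)^{1/q}\le(\sum_j\Vert g_j\Vert_{X(\mu)}^q)^{1/q}\cdot$(unit-type bound)—more precisely, one should not pick the $g_j$ arbitrarily but choose a single decomposition adapted to the whole family: set $F=(\sum_j|f_j|^q)^{1/q}$, pick $F=G+H$ with $G\in X(\mu)$, $H\in Y(\mu)$, $\Vert G\Vert_{X(\mu)}+\Vert H\Vert_{Y(\mu)}\le\Vert F\Vert_{X(\mu)+Y(\mu)}+\varepsilon$, and then put $g_j=f_j\cdot\frac{G}{F}$ and $h_j=f_j\cdot\frac{H}{F}$ (on $\{F>0\}$, zero elsewhere). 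Then $|g_j|\le|f_j|$ with $\frac{G}{F}\le 1$ gives $(\sum_j|g_j|^q)^{1/q}=\frac{G}{F}\cdot F=G$, so $\Vert(\sum_j|g_j|^q)^{1/q}\Vert_{X(\mu)}=\Vert G\Vert_{X(\mu)}$, and likewise $\Vert(\sum_j|h_j|^q)^{1/q}\Vert_{Y(\mu)}=\Vert H\Vert_{Y(\mu)}$; also $g_j\in X(\mu)$ since $|g_j|\le G\in X(\mu)$ and $X(\mu)$ is a quasi-B.f.s., and similarly $h_j\in Y(\mu)$, while $g_j+h_j=f_j$. Combining, $(\sum_j\Vert Tf_j\Vert_E^q)^{1/q}\le C'(\Vert G\Vert_{X(\mu)}+\Vert H\Vert_{Y(\mu)})\le C'(\Vert F\Vert_{X(\mu)+Y(\mu)}+\varepsilon)$, and letting $\varepsilon\to0$ yields $q$-concavity of $T$ on $X(\mu)+Y(\mu)$ with constant $C'=C'(K,q,C_X,C_Y)$. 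The main obstacle, and the step deserving care, is precisely this construction of a \emph{coherent} decomposition $f_j=g_j+h_j$ of the whole family driven by a single splitting of $F=(\sum_j|f_j|^q)^{1/q}$, together with checking that the pointwise identities $(\sum_j|g_j|^q)^{1/q}=G$ and $(\sum_j|h_j|^q)^{1/q}=H$ hold $\mu$-a.e.\ and that everything stays inside the respective spaces; the quasi-norm constant $K$ of $E$ and the use of \eqref{EQ: r-sum} when $q<1$ are only mild bookkeeping.
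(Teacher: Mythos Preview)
Your argument is correct, and the route you take in the substantive direction is genuinely different from the paper's. Both proofs start from a near-optimal decomposition $F=(\sum_j|f_j|^q)^{1/q}=G+H$ with $G\in X(\mu)$, $H\in Y(\mu)$, and must then split each $f_j$ compatibly. The paper does this with a \emph{set-based} split: it takes $A=\{\omega:F(\omega)\le 2|G(\omega)|\}$ and uses $f_j\chi_A\in X(\mu)$, $f_j\chi_{\Omega\setminus A}\in Y(\mu)$, which introduces a factor $2$ and the constant $\alpha_q=\max\{1,2^{q-1}\}$. You instead use a \emph{multiplicative} split $g_j=f_j\,G/F$, $h_j=f_j\,H/F$, which gives the exact identities $(\sum_j|g_j|^q)^{1/q}=|G|$ and $(\sum_j|h_j|^q)^{1/q}=|H|$ and hence slightly cleaner constants; the price is the (routine) care about defining $G/F$ on $\{F=0\}$. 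One small imprecision: you write $G/F\le 1$ and $(\sum_j|g_j|^q)^{1/q}=G$, but the decomposition $F=G+H$ need not have $G,H\ge0$, so these should read $|G|/F\le 1$ and $(\sum_j|g_j|^q)^{1/q}=|G|$; this is harmless because $|g_j|\le|G|$ still places $g_j\in X(\mu)$ and $\Vert\,|G|\,\Vert_{X(\mu)}=\Vert G\Vert_{X(\mu)}$. (Alternatively, one can first replace $G$ by $G^\ast=\min(F,|G|)\ge0$ and $H$ by $F-G^\ast$, checking that $0\le F-G^\ast\le|H|$, to reduce to the nonnegative case without loss in the norms.)
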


\begin{proof}
If $T\colon X(\mu)+Y(\mu)\to E$ is $q$-concave, since $X(\mu)\subset
X(\mu)+Y(\mu)$ continuously, it follows that $T\colon X(\mu)\to E$
is $q$-concave. Similarly, $T\colon Y(\mu)\to E$ is $q$-concave.

Suppose that $T\colon X(\mu)\to E$ and $T\colon Y(\mu)\to E$ are
$q$-concave and denote by $C_X$ and $C_Y$ their respective
$q$-concavity constants. Write $K$ for the constant satisfying the
property (iii) of the quasi-norm $\Vert\cdot\Vert_E$. We will use
the inequality:
\begin{equation}\label{EQ: t-inequality}
(a+b)^t\le \max\{1,2^{t-1}\}(a^t+b^t)
\end{equation}
where $0\le a,b<\infty$ and $0<t<\infty$. Let $(f_j)_{j=1}^n\subset
X(\mu)+Y(\mu)$. For
$h=\big(\sum_{j=1}^n|f_j|^q\big)^{\frac{1}{q}}\in X(\mu)+Y(\mu)$,
consider $h_1\in X(\mu)$ and $h_2\in Y(\mu)$ such that $h=h_1+h_2$
$\mu$-a.e. Taking the set $A=\big\{\omega\in\Omega:\, h(\omega)\le
2|h_1(\omega)|\big\}$, $\alpha_q=\max\{1,2^{q-1}\}$ and using
\eqref{EQ: t-inequality}, we have that
\begin{eqnarray*}
\sum_{j=1}^n\Vert T(f_j)\Vert_E^q & \le & K^q\,
\sum_{j=1}^n\Big(\Vert T(f_j\chi_A)\Vert_E+\Vert
T(f_j\chi_{\Omega\backslash A})\Vert_E\Big)^q \\
& \le & K^q\,\alpha_q\,\left(\sum_{j=1}^n\Vert
T(f_j\chi_A)\Vert_E^q+\sum_{j=1}^n\Vert T(f_j\chi_{\Omega\backslash
A})\Vert_E^q\right).
\end{eqnarray*}
Note that $(f_j\chi_A)_{j=1}^n\subset X(\mu)$ as $|f_j|\chi_A\le
h\chi_A\le 2|h_1|$ for all $j$. Then,
\begin{eqnarray*}
\sum_{j=1}^n\Vert T(f_j\chi_A)\Vert_E^q & \le &
C_X^{\,q}\,\Big\Vert\Big(\sum_{j=1}^n|f_j|^q\Big)^{\frac{1}{q}}\chi_A\Big\Vert_{X(\mu)}^q
\\ & = & C_X^{\,q}\,\Vert h\chi_A\Vert_{X(\mu)}^q\le 2^qC_X^{\,q}\,\Vert
h_1\Vert_{X(\mu)}^q.
\end{eqnarray*}
Similarly, $(f_j\chi_{\Omega\backslash A})_{j=1}^n\subset Y(\mu)$ as
$|f_j|\chi_{\Omega\backslash A}\le h\chi_{\Omega\backslash A}\le
2|h_2|$ $\mu$-a.e.\ for all $j$ and so
$$
\sum_{j=1}^n\Vert T(f_j\chi_{\Omega\backslash A})\Vert_E^q \le
2^qC_Y^{\,q}\,\Vert h_2\Vert_{Y(\mu)}^q.
$$
Denoting $C=\max\{C_X,C_Y\}$ and using again \eqref{EQ:
t-inequality}, it follows that
\begin{eqnarray*}
\Big(\sum_{j=1}^n\Vert T(f_j)\Vert_E^q\Big)^{\frac{1}{q}} & \le &
2KC\alpha_q^{\frac{1}{q}}\,\big(\Vert h_1\Vert_{X(\mu)}^q+\Vert
h_2\Vert_{Y(\mu)}^q\big)^{\frac{1}{q}}  \\ & \le &
2^{1+|1-\frac{1}{q}|}KC\,\big(\Vert h_1\Vert_{X(\mu)}+\Vert
h_2\Vert_{Y(\mu)}\big).
\end{eqnarray*}
Taking infimum over all representations
$\big(\sum_{j=1}^n|f_j|^q\big)^{\frac{1}{q}}=h_1+h_2$ $\mu$-a.e.\
with $h_1\in X(\mu)$ and $h_2\in Y(\mu)$, we have that
$$
\Big(\sum_{j=1}^n\Vert
T(f_j)\Vert_E^q\Big)^{\frac{1}{q}}\le2^{1+|1-\frac{1}{q}|}KC\,
\Big\Vert
\Big(\sum_{j=1}^n|f_j|^q\Big)^{\frac{1}{q}}\Big\Vert_{X(\mu)+Y(\mu)}.
$$
\end{proof}

Further information on Banach lattices and function spaces can be
found for instance in \cite{bennett-sharpley,kalton-peck-roberts,lindenstrauss-tzafriri,luxemburg-zaanen,okada-ricker-sanchezperez} and
\cite{zaanen}.

\subsection{Integration with respect to a vector measure defined on a
$\delta$-ring}

Let $\mathcal{R}$ be a \emph{$\delta$--ring} of subsets of $\Omega$
(i.e.\ a ring closed under countable intersections) and let
$\mathcal{R}^{loc}$ be the $\sigma$--algebra of all subsets $A$ of
$\Omega$ such that $A\cap B\in\mathcal{R}$ for all
$B\in\mathcal{R}$. Note that $\mathcal{R}^{loc}=\mathcal{R}$
whenever $\mathcal{R}$ is a $\sigma$-algebra. Write
$\mathcal{S}(\mathcal{R})$ for the space of all
\emph{$\mathcal{R}$--simple functions} (i.e.\ simple functions
supported in $\mathcal{R}$).

A Banach space valued set function $m\colon\mathcal{R}\to E$ is  a
\emph{vector measure} (\emph{real measure} when $E=\mathbb{R}$) if
$\sum m(A_n)$ converges to $m(\cup A_n)$ in $E$ for each sequence
$(A_n)\subset\mathcal{R}$ of pairwise disjoint sets with $\cup
A_n\in\mathcal{R}$.

The \emph{variation} of a real measure $\lambda\colon\mathcal{R}\to
\mathbb{R}$ is the measure
$|\lambda|\colon\mathcal{R}^{loc}\to[0,\infty]$ given by
$$
|\lambda|(A)=\sup\Big\{\sum|\lambda(A_j)|:\, (A_j) \textnormal{
finite disjoint sequence in } \mathcal{R}\cap 2^A\Big\}.
$$
The variation $|\lambda|$ is finite on $\mathcal{R}$. The space
$L^1(\lambda)$ of \emph{integrable functions with respect to
$\lambda$} is defined as the classical space $L^1(|\lambda|)$ with
the usual norm $|f|_\lambda=\int_\Omega|f|\,d|\lambda|$. The
integral of an $\mathcal{R}$--simple function
$\varphi=\sum_{j=1}^na_j\chi_{A_j}$ over $A\in\mathcal{R}^{loc}$ is
defined in the natural way by
$\int_A\varphi\,d\lambda=\sum_{j=1}^na_j\lambda(A_j\cap A)$. The
space $\mathcal{S}(\mathcal{R})$ is dense in $L^1(\lambda)$. This
allows to define the integral of a function $f\in L^1(\lambda)$ over
$A\in\mathcal{R}^{loc}$ as $\int_A
f\,d\lambda=\lim\int_A\varphi_n\,d\lambda$ for any sequence
$(\varphi_n)\subset\mathcal{S}(\mathcal{R})$ converging to $f$ in
$L^1(\lambda)$.

The \emph{semivariation} of a vector measure $m\colon\mathcal{R}\to
E$ is the function $\Vert
m\Vert\colon\mathcal{R}^{loc}\to[0,\infty]$ defined by
$$
\Vert m\Vert(A)=\sup_{x^*\in B_{E^*}}|x^*m|(A),
$$
where $B_{E^*}$ is the closed unit ball of the topological dual
$E^*$ of $E$ and $|x^*m|$ is the variation of the real measure
$x^*m$ given by the composition of $m$ with $x^*$. The semivariation
$\Vert m\Vert$ is finite on $\mathcal{R}$.

A set $A\in\mathcal{R}^{loc}$ is said to be \emph{$m$--null} if
$m(B)=0$ for every $B\in\mathcal{R}\cap 2^A$. This is equivalent to
$\Vert m\Vert(A)=0$. It is known that there exists a measure
$\eta\colon\mathcal{R}^{loc}\to[0,\infty]$ equivalent to $\Vert
m\Vert $ (see \cite[Theorem 3.2]{brooks-dinculeanu}). Denote
$L^0(m)=L^0(\eta)$.

The space $L_w^1(m)$ of \emph{weakly integrable} functions with
respect to $m$ is defined as the space of ($m$-a.e.\ equal)
functions $f\in L^0(m)$ such that $f\in L^1(x^*m)$ for every $x^*\in
E^*$. The space $L^1(m)$ of \emph{integrable} functions with respect
to $m$ consists in all functions $f\in L_w^1(m)$ satisfying that for
each $A\in \mathcal{R}^{loc}$ there exists $x_A \in E$, which is
denoted by $\int_A f\,dm$, such that
$$
x^*(x_A)=\int_A f\,dx^*m,\ \ \textnormal{ for all } x^*\in E^*.
$$
The spaces $L^1(m)$ and $L_w^1(m)$ are B.f.s.'\ related to the
measure space $(\Omega,\mathcal{R}^{loc},\eta)$, and the expression
$$
\Vert f\Vert_m=\sup_{x^*\in B_{E^*}}\int_\Omega |f|\,d|x^*m|
$$
gives a norm for both spaces. The norm of $f\in L^1(m)$ can also be
computed by means of the formula
\begin{equation}\label{EQ: L1m-intnorm}
\Vert f\Vert_m=\sup \left\{ \left\Vert\int_\Omega f\varphi \, d
m\right\Vert_E: \ \varphi \in\mathcal{S}(\mathcal{R}),\,
|\varphi|\le1\right\}.
\end{equation}
Moreover, $L^1(m)$ is $\sigma$-order continuous and contains
$\mathcal{S}(\mathcal{R})$ as a dense subset and $L_w^1(m)$ has the
$\sigma$-Fatou property. For every $\mathcal{R}$-simple function
$\varphi=\sum_{j=1}^n\alpha_j\chi_{A_i}$ it follows that
$\int_A\varphi\,dm=\sum_{j=1}^n\alpha_im(A_j\cap A)$ for all $A\in
\mathcal{R}^{loc}$.

The \emph{integration operator} $I_m\colon L^1(m)\to E$ given by
$I_m(f)=\int_\Omega f\,dm$, is a continuous linear operator with
$\Vert I_m(f)\Vert_E\le\Vert f\Vert_m$. If $m$ is \emph{positive},
that is $m(A)\ge0$ for all $A\in\mathcal{R}$, then $\Vert
f\Vert_m=\Vert I_m(|f|)\Vert_E$ for all $f\in L^1(m)$.

For every $g\in L^1(m)$, the set function
$m_g\colon\mathcal{R}^{loc}\to E$ given by $m_g(A)=I_m(g\chi_A)$ is
a vector measure. Moreover, $f\in L^1(m_g)$ if and only if $fg\in
L^1(m)$, and in this case $\Vert f\Vert_{L^1(m_g)}=\Vert
fg\Vert_{L^1(m)}$.

For definitions and general results regarding integration with
respect to a vector measure defined on a $\delta$-ring we refer to
\cite{calabuig-delgado-juan-sanchezperez,delgado1,lewis,masani-niemi1,masani-niemi2}.

Let $p\in(0,\infty)$. We denote by $L^p(m)$ the $p$-power of
$L^1(m)$, that is,
$$
L^p(m)=\big\{f\in L^0(m):\, |f|^p\in L^1(m)\big\}.
$$
As noted in Remark \ref{REM: XpResults}, the space $L^p(m)$ is a
$\sigma$-order continuous quasi-B.f.s.\ with the quasi-norm $\Vert
f\Vert_{L^p(m)}=\Vert\,|f|^p\,\Vert_{L^1(m)}^{1/p}$. Moreover, if
$p\ge1$ then $\Vert\cdot\Vert_{L^p(m)}$ is a norm and so $L^p(m)$ is
a B.f.s. Direct proofs of these facts and some general results on
the spaces $L^p(m)$ can be found in
\cite{calabuig-juan-sanchezperez}.

%%% ----------------------------------------------------------------------

\section{The $q$-concave core of a $\sigma$-order continuous
quasi-B.f.s}\label{SEC: q-concaveCore}

Let $X(\mu)$ be a $\sigma$-order continuous quasi-B.f.s.\ and
$q\in(0,\infty)$. We define the space $qX(\mu)$ to be the set of
functions $f\in X(\mu)$ such that
$$
\Vert f\Vert_{qX(\mu)}=\sup\Big(\sum_{j=1}^n\Vert
f_j\Vert_{X(\mu)}^q\Big)^{\frac{1}{q}}<\infty,
$$
where the supremum is taken over all finite set
$(f_j)_{j=1}^n\subset X(\mu)$ satisfying
$|f|=\big(\sum_{j=1}^n|f_j|^q\big)^{\frac{1}{q}}$ $\mu$-a.e. Note
that $\Vert f\Vert_{X(\mu)}\le \Vert f\Vert_{qX(\mu)}$.

\begin{proposition}
The space $qX(\mu)$ is a quasi-B.f.s.\ with quasi-norm $\Vert
\cdot\Vert_{qX(\mu)}$.
\end{proposition}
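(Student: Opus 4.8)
The plan is to verify, in turn, that $\Vert\cdot\Vert_{qX(\mu)}$ is a quasi-norm on $qX(\mu)$, that $qX(\mu)$ has the ideal (lattice) property required of a quasi-B.f.s., and that $qX(\mu)$ is complete; completeness will be the main obstacle. Definiteness and homogeneity are essentially free: $\Vert f\Vert_{X(\mu)}\le\Vert f\Vert_{qX(\mu)}$ forces $f=0$ when the right-hand side vanishes, and for $\alpha\neq0$ the map $(f_j)_j\mapsto(\alpha f_j)_j$ is a bijection between the finite families representing $|f|$ and those representing $|\alpha f|$, so $\Vert\alpha f\Vert_{qX(\mu)}=|\alpha|\,\Vert f\Vert_{qX(\mu)}$. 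The quasi-triangle inequality and completeness will both rest on the ideal property, which I would establish first.

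\textbf{Ideal property.} Suppose $f\in qX(\mu)$, $g\in L^0(\mu)$ and $|g|\le|f|$ $\mu$-a.e. Given a representation $|g|=\big(\sum_{j=1}^{n}|g_j|^{q}\big)^{1/q}$ with $g_j\in X(\mu)$, I would set $f_j=\tfrac{|f|}{|g|}\,g_j$ on $\{|g|>0\}$, $f_j=0$ on $\{|g|=0\}$, and add the remainder term $f_{n+1}=|f|\,\chi_{\{|g|=0\}}$. One checks $|f_j|\le|f|$ for all $j$ (so $f_1,\dots,f_{n+1}\in X(\mu)$), $|g_j|\le|f_j|$ for $j\le n$, and $|f|=\big(\sum_{j=1}^{n+1}|f_j|^{q}\big)^{1/q}$ $\mu$-a.e.; hence $\sum_{j=1}^{n}\Vert g_j\Vert_{X(\mu)}^{q}\le\sum_{j=1}^{n+1}\Vert f_j\Vert_{X(\mu)}^{q}\le\Vert f\Vert_{qX(\mu)}^{q}$, and taking the supremum over all representations of $|g|$ gives $g\in qX(\mu)$ with $\Vert g\Vert_{qX(\mu)}\le\Vert f\Vert_{qX(\mu)}$.

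\textbf{Quasi-triangle inequality} (which also shows $qX(\mu)$ is a vector space). Let $f,g\in qX(\mu)$ and let $|f+g|=\big(\sum_{j=1}^{n}|h_j|^{q}\big)^{1/q}$ be a representation; since $|h_j|\le|f+g|\le|f|+|g|\in X(\mu)$, all functions appearing below lie in $X(\mu)$. Split $h_j=f_j+g_j$ with $f_j=h_j\,\tfrac{|f|}{|f|+|g|}$ and $g_j=h_j\,\tfrac{|g|}{|f|+|g|}$ on $\{|f|+|g|>0\}$ (and $0$ elsewhere). Then $\big(\sum_j|f_j|^{q}\big)^{1/q}=\tfrac{|f|}{|f|+|g|}\,|f+g|\le|f|$, so this function belongs to $qX(\mu)$ with $qX(\mu)$-quasi-norm $\le\Vert f\Vert_{qX(\mu)}$ by the ideal property; since the $f_j\in X(\mu)$ represent it, $\sum_j\Vert f_j\Vert_{X(\mu)}^{q}\le\Vert f\Vert_{qX(\mu)}^{q}$, and likewise $\sum_j\Vert g_j\Vert_{X(\mu)}^{q}\le\Vert g\Vert_{qX(\mu)}^{q}$. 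From $\Vert h_j\Vert_{X(\mu)}^{q}\le K^{q}\big(\Vert f_j\Vert_{X(\mu)}+\Vert g_j\Vert_{X(\mu)}\big)^{q}$ (with $K$ the constant in property (iii) of $\Vert\cdot\Vert_{X(\mu)}$), two applications of \eqref{EQ: t-inequality}, and summation over $j$, one obtains $\big(\sum_j\Vert h_j\Vert_{X(\mu)}^{q}\big)^{1/q}\le C_0\big(\Vert f\Vert_{qX(\mu)}+\Vert g\Vert_{qX(\mu)}\big)$ with $C_0$ depending only on $K$ and $q$; the supremum over representations of $|f+g|$ finishes this step.

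\textbf{Completeness.} This I expect to be the hard part, owing to the mismatch between the exponent $r$ of \eqref{EQ: r-sum} for $X(\mu)$ and the exponent $q$. I would verify condition (c) of Proposition \ref{PROP: quasi-normCompleteness} with $r'=\rho:=\min\{r,q\}$. Let $(f_n)\subset qX(\mu)$ with $\sum_n\Vert f_n\Vert_{qX(\mu)}^{\rho}<\infty$. Since $\rho\le r$ and $\Vert f_n\Vert_{X(\mu)}\le\Vert f_n\Vert_{qX(\mu)}$, Proposition \ref{PROP: quasi-normCompleteness}(b) gives that $f:=\sum_n f_n$ and, for each $N$, $g_N:=\sum_{n>N}f_n$ and $S_N:=\sum_{n>N}|f_n|$ converge in $X(\mu)$, with $|g_N|\le S_N=\sum_{n>N}|f_n|$ $\mu$-a.e.\ (the identity by Proposition \ref{PROP: mu-a.e.ConvergenceSubsequence} together with monotonicity of the partial sums). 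It suffices to show $\Vert g_N\Vert_{qX(\mu)}\to0$, for then $f=\sum_{n\le N}f_n+g_N\in qX(\mu)$ and $\sum_n f_n\to f$ in $qX(\mu)$. To estimate $\Vert g_N\Vert_{qX(\mu)}$, fix a representation $|g_N|=\big(\sum_{j=1}^{m}|h_j|^{q}\big)^{1/q}$ and, for $n>N$, set $h_{j,n}=h_j\,\tfrac{|f_n|}{S_N}$ on $\{S_N>0\}$ ($0$ elsewhere), so that $\sum_{n>N}|h_{j,n}|=|h_j|$ $\mu$-a.e.\ and all $h_{j,n}$ share the sign of $h_j$. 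For fixed $n$, $\big(\sum_j|h_{j,n}|^{q}\big)^{1/q}=\tfrac{|f_n|}{S_N}\,|g_N|\le|f_n|$, so as above $\sum_j\Vert h_{j,n}\Vert_{X(\mu)}^{q}\le\Vert f_n\Vert_{qX(\mu)}^{q}$. For fixed $j$, $\sum_{N<n\le M}|h_{j,n}|\uparrow|h_j|\in X(\mu)$, so $\sigma$-order continuity of $X(\mu)$ yields $h_j=\sum_{n>N}h_{j,n}$ in $X(\mu)$, whence \eqref{EQ: norm-sum} (used with the exponent $\rho\le r$) gives $\Vert h_j\Vert_{X(\mu)}\le4^{1/r}K\big(\sum_{n>N}\Vert h_{j,n}\Vert_{X(\mu)}^{\rho}\big)^{1/\rho}$. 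Since $q/\rho\ge1$, raising this to the power $q$, summing over $j$, and applying Minkowski's inequality in $\ell^{q/\rho}$ to the double sum yields $\sum_j\Vert h_j\Vert_{X(\mu)}^{q}\le(4^{1/r}K)^{q}\big(\sum_{n>N}\Vert f_n\Vert_{qX(\mu)}^{\rho}\big)^{q/\rho}$, hence $\Vert g_N\Vert_{qX(\mu)}\le4^{1/r}K\big(\sum_{n>N}\Vert f_n\Vert_{qX(\mu)}^{\rho}\big)^{1/\rho}\to0$. Apart from the exponent bookkeeping, the delicate points are the well-definedness of the proportional splittings and their behaviour on the null sets $\{|g|=0\}$ and $\{S_N=0\}$.
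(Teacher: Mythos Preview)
Your proof is correct and follows a genuinely different route from the paper's. The paper treats the three parts with separate devices: for the ideal property it appends the single remainder $h=\big||f|^q-|g|^q\big|^{1/q}$ to a representation of $|g|$; for the quasi-triangle inequality it uses the set-splitting $A=\{\,|f+g|\le2|f|\,\}$ (so $h_j\chi_A$ is controlled by $2|f|$ and $h_j\chi_{\Omega\setminus A}$ by $2|g|$); and for completeness it shows $g=\sum f_n\in qX(\mu)$ by approximating through the sets $A_k=\{\,|g|\le\gamma\sum_{j\le k}|f_j|\,\}$, invoking $\sigma$-order continuity together with \eqref{EQ: Limit-quasi-norm}, and then sending $\gamma\to1$. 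You instead use one idea throughout, the \emph{proportional} splitting ($f_j=\tfrac{|f|}{|g|}\,g_j$, $f_j=h_j\tfrac{|f|}{|f|+|g|}$, $h_{j,n}=h_j\tfrac{|f_n|}{S_N}$), and in the completeness step you combine it with Minkowski's inequality in $\ell^{q/\rho}$ to bound $\Vert g_N\Vert_{qX(\mu)}$ directly. This is cleaner: it avoids the auxiliary parameter $\gamma$ and the $\liminf$-type estimate \eqref{EQ: Limit-quasi-norm}, and it yields the tail estimate and membership of $g$ in $qX(\mu)$ in a single stroke. Both arguments use $\sigma$-order continuity of $X(\mu)$ at essentially the same place (turning monotone pointwise convergence into norm convergence), and both rely on \eqref{EQ: norm-sum}; the constants differ, but that is immaterial for the statement.
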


\begin{proof}
First let us see that if $f\in qX(\mu)$ and $g\in L^0(\mu)$ with
$|g|\le|f|$ $\mu$--a.e.\ then $g\in qX(\mu)$ and $\Vert
g\Vert_{qX(\mu)}\le\Vert f\Vert_{qX(\mu)}$. Note that $g\in X(\mu)$
as $f\in X(\mu)$. Let $(g_j)_{j=1}^n\subset X(\mu)$ be such that
$|g|=\big(\sum_{j=1}^n|g_j|^q\big)^{\frac{1}{q}}$ $\mu$-a.e.\ and
take $h=\big|\,|f|^q-|g|^q\,\big|^{\frac{1}{q}}\in X(\mu)$.  Since
$|f|=\big(\sum_{j=1}^n|g_j|^q+|h|^q\big)^{\frac{1}{q}}$ $\mu$-a.e.,
we have that
$$
\Big(\sum_{j=1}^n\Vert
g_j\Vert_{X(\mu)}^q\Big)^{\frac{1}{q}}\le\Big(\sum_{j=1}^n\Vert
g_j\Vert_{X(\mu)}^q+\Vert
h\Vert_{X(\mu)}^q\Big)^{\frac{1}{q}}\le\Vert f\Vert_{qX(\mu)}.
$$
Taking supremum over all $(g_j)_{j=1}^n\subset X$ with
$|g|=\big(\sum_{j=1}^n|g_j|^q\big)^{\frac{1}{q}}$ $\mu$-a.e., we
have that $g\in qX(\mu)$ with $\Vert g\Vert_{qX(\mu)}\le\Vert
f\Vert_{qX(\mu)}$.

It is direct to check that $\Vert \cdot\Vert_{qX(\mu)}$ satisfies
the properties (i) and (ii) of a quasi-norm. Let $K$ be the constant
satisfying the property (iii) of a quasi-norm for $\Vert
\cdot\Vert_{X(\mu)}$. Given $f,g\in qX(\mu)$ and
$(h_j)_{j=1}^n\subset X$ such that
$|f+g|=\big(\sum_{j=1}^n|h_j|^q\big)^{\frac{1}{q}}$ $\mu$-a.e., by
taking
$A=\big\{\omega\in\Omega:\,|f(\omega)+g(\omega)|\le2|f(\omega)|\big\}$,
$\alpha_q=\max\{1,2^{q-1}\}$ and using \eqref{EQ: t-inequality}, we
have that
\begin{eqnarray*}
\sum_{j=1}^n\Vert h_j\Vert_{X(\mu)}^q & \le &
K^q\sum_{j=1}^n\big(\Vert h_j\chi_A\Vert_{X(\mu)}+\Vert
h_j\chi_{\Omega\backslash A}\Vert_{X(\mu)}\big)^q \\ & \le &
K^q\alpha_q\Big(\sum_{j=1}^n\Vert
h_j\chi_A\Vert_{X(\mu)}^q+\sum_{j=1}^n\Vert
h_j\chi_{\Omega\backslash A}\Vert_{X(\mu)}^q\Big).
\end{eqnarray*}
Note that $|f+g|\chi_A, |f+g|\chi_{\Omega\backslash A}\in qX(\mu)$
as $|f+g|\chi_A\le2|f|$ and $|f+g|\chi_{\Omega\backslash A}\le2|g|$.
Then,
\begin{eqnarray*}
\sum_{j=1}^n\Vert h_j\Vert_{X(\mu)}^q & \le &
K^q\alpha_q\Big(\big\Vert |f+g|\chi_A\big\Vert_{qX(\mu)}^q+\big\Vert
|f+g|\chi_{\Omega\backslash A}\big\Vert_{qX(\mu)}^q\Big)
\\ & \le &
2^qK^q\alpha_q\big(\Vert f\Vert_{qX(\mu)}^q+\Vert
g\Vert_{qX(\mu)}^q\big).
\end{eqnarray*}
By using again \eqref{EQ: t-inequality}, we have that
\begin{eqnarray*}
\Big(\sum_{j=1}^n\Vert h_j\Vert_{X(\mu)}^q\Big)^{\frac{1}{q}} & \le
& 2K\alpha_q^{\frac{1}{q}}\big(\Vert f\Vert_{qX(\mu)}^q+\Vert
g\Vert_{qX(\mu)}^q\big)^{\frac{1}{q}} \\ & \le &
2^{1+|1-\frac{1}{q}|}K\big(\Vert f\Vert_{qX(\mu)}+\Vert
g\Vert_{qX(\mu)}\big).
\end{eqnarray*}
Taking supremum over all $(h_j)_{j=1}^n\subset X$ with
$|f+g|=\big(\sum_{j=1}^n|h_j|^q\big)^{\frac{1}{q}}$ $\mu$-a.e., we
have that
\begin{equation}\label{EQ: pX-Kconstant}
\Vert f+g\Vert_{qX(\mu)}\le2^{1+|1-\frac{1}{q}|}K\big(\Vert
f\Vert_{qX(\mu)}+\Vert g\Vert_{qX(\mu)}\big).
\end{equation}

Finally, let us prove that $qX(\mu)$ is complete. Denote by $r$ and
$r'$ the constants satisfying \eqref{EQ: r-sum} for $X(\mu)$ and
$qX(\mu)$ respectively. Note that $r'<r$ as
$2^{1+|1-\frac{1}{q}|}K>K$. Let $(f_n)\subset qX(\mu)$ be such that
$\sum\Vert f_n\Vert_{qX(\mu)}^{r'}<\infty$. Since
$\Vert\cdot\Vert_{X(\mu)}\le\Vert\cdot\Vert_{qX(\mu)}$, from
Proposition \ref{PROP: quasi-normCompleteness},
 we have that
$\sum_{j=1}^kf_j\to g$ and $\sum_{j=1}^k|f_j|\to \tilde{g}$ in
$X(\mu)$. From Proposition \ref{PROP:
mu-a.e.ConvergenceSubsequence}, it follows that $\sum_{j=1}^kf_j\to
g$ and $\sum_{j=1}^k|f_j|\to \tilde{g}$ pointwise except on a
$\mu$-null set $Z$. Fix any $\gamma>1$ and consider the sets
$A_k=\big\{\omega\in\Omega:\,|g(\omega)|\le
\gamma\sum_{j=1}^k|f_j(\omega)|\big\}$. Note that $\Omega\backslash
\cup A_k\subset Z$ and so it is $\mu$-null. Indeed, if
$\omega\not\in Z$ and $|g(\omega)|>\gamma\sum_{j=1}^k|f_j(\omega)|$
for all $k$ (in particular $\sum|f_n(\omega)|\not=0$), then
$\gamma\sum|f_n(\omega)|\le|g(\omega)|\le\sum|f_n(\omega)|<\infty$,
which is a contradiction. Also note that $g\chi_{A_k}\in qX(\mu)$ as
$|g|\chi_{A_k}\le \gamma\sum_{j=1}^k|f_j|$. Given
$(h_j)_{j=1}^n\subset X$ with
$|g|=\big(\sum_{j=1}^n|h_j|^q\big)^{\frac{1}{q}}$ $\mu$-a.e., we
have that
\begin{eqnarray*}
\Big(\sum_{j=1}^n\Vert
h_j\chi_{A_k}\Vert_{X(\mu)}^q\Big)^{\frac{1}{q}} & \le & \Vert
g\chi_{A_k}\Vert_{qX(\mu)}\le \gamma\Big\Vert
\sum_{j=1}^k|f_j|\Big\Vert_{qX(\mu)} \\ & \le &
4^{\frac{1}{r'}}\gamma\Big(\sum_{j=1}^k\Vert
f_j\Vert_{qX(\mu)}^{r'}\Big)^{\frac{1}{r'}}
\\ & \le &
4^{\frac{1}{r'}}\gamma\Big(\sum\Vert
f_n\Vert_{qX(\mu)}^{r'}\Big)^{\frac{1}{r'}}.
\end{eqnarray*}
On other hand, since $X(\mu)$ is $\sigma$-order continuous and
$|h_j|\chi_{A_k}\uparrow |h_j|$ $\mu$-a.e.\ as $k\to\infty$, we have
that $h_j\chi_{A_k}\to h_j$ in $X(\mu)$ as $k\to\infty$. Taking
limit as $k\to\infty$ in the above inequality and applying
\eqref{EQ: Limit-quasi-norm}, we obtain that
$$
\Big(\sum_{j=1}^n\Vert h_j\Vert_{X(\mu)}^q\Big)^{\frac{1}{q}} \le
4^{\frac{1}{r}+\frac{1}{r'}}\gamma\Big(\sum\Vert
f_n\Vert_{qX(\mu)}^{r'}\Big)^{\frac{1}{r'}}.
$$
Now, taking supremum over all $(h_j)_{j=1}^n\subset X$ with
$|g|=\big(\sum_{j=1}^n|h_j|^q\big)^{\frac{1}{q}}$ $\mu$-a.e., it
follows that $g\in qX(\mu)$ with $\Vert
g\Vert_{qX(\mu)}\le4^{\frac{1}{r}+\frac{1}{r'}}\gamma\big(\sum\Vert
f_n\Vert_{qX(\mu)}^{r'}\big)^{\frac{1}{r'}}$. Even more, since
$\gamma$ is arbitrary, taking $\gamma\to 1$ we have that
$$
\Big\Vert \sum
f_n\Big\Vert_{qX(\mu)}\le4^{\frac{1}{r}+\frac{1}{r'}}\Big(\sum\Vert
f_n\Vert_{qX(\mu)}^{r'}\Big)^{\frac{1}{r'}}.
$$
Of course $\sum_{j=1}^nf_j\to g$ in $qX(\mu)$ as
$$
\Big\Vert g-\sum_{j=1}^nf_j\Big\Vert_{qX(\mu)}=\Big\Vert
\sum_{j>n}f_j\Big\Vert_{qX(\mu)}\le4^{\frac{1}{r}+\frac{1}{r'}}\Big(\sum_{j>n}\Vert
f_j\Vert_{qX(\mu)}^{r'}\Big)^{\frac{1}{r'}}\to0.
$$
Therefore, from Proposition \ref{PROP: quasi-normCompleteness} it
follows that $qX(\mu)$ is complete.
\end{proof}

\begin{proposition}
The space $qX(\mu)$ is $q$-concave. In consequence, it is also $\sigma$-order continuous.
\end{proposition}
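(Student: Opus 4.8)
The plan is to verify the $q$-concavity inequality for the identity map on $qX(\mu)$ directly from the definition of the quasi-norm $\Vert\cdot\Vert_{qX(\mu)}$, with constant $1$. So let $(g_1,\dots,g_m)\subset qX(\mu)$ be a finite family and set $g=\big(\sum_{i=1}^m|g_i|^q\big)^{1/q}$; I need to show $g\in qX(\mu)$ and $\big(\sum_{i=1}^m\Vert g_i\Vert_{qX(\mu)}^q\big)^{1/q}\le \Vert g\Vert_{qX(\mu)}$. First I would observe that each $|g_i|\le g$ pointwise $\mu$-a.e., so that the lattice property of $qX(\mu)$ (already proved in the previous proposition) gives $g_i\in qX(\mu)$; also $g\in X(\mu)$ since $qX(\mu)\subset X(\mu)$ is a lattice ideal and each $|g_i|^q\in X(\mu)$ — actually the cleanest route is to note that $g\in qX(\mu)$ will follow from the decomposition exhibited below once we bound the relevant sums.

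The key step is a bookkeeping argument on nested decompositions. Fix, for each $i$, an arbitrary finite family $(g_{i,1},\dots,g_{i,n_i})\subset X(\mu)$ with $|g_i|=\big(\sum_{k=1}^{n_i}|g_{i,k}|^q\big)^{1/q}$ $\mu$-a.e. Then the concatenated family $(g_{i,k})_{1\le i\le m,\,1\le k\le n_i}$ satisfies
\[
\Big(\sum_{i=1}^m\sum_{k=1}^{n_i}|g_{i,k}|^q\Big)^{1/q}=\Big(\sum_{i=1}^m|g_i|^q\Big)^{1/q}=|g|\quad\mu\text{-a.e.},
\]
so by definition of $\Vert g\Vert_{qX(\mu)}$ we get $\big(\sum_{i=1}^m\sum_{k=1}^{n_i}\Vert g_{i,k}\Vert_{X(\mu)}^q\big)^{1/q}\le\Vert g\Vert_{qX(\mu)}$; in particular this shows $g\in qX(\mu)$ as soon as we know $g\in X(\mu)$, which holds because $|g|^q=\sum_i|g_i|^q\in X(\mu)$ (a finite sum of elements of $X(\mu)$). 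Now for each fixed $i$ take the supremum over all admissible decompositions $(g_{i,k})_k$ of $g_i$: since the terms with different $i$ only add, we obtain
\[
\sum_{i=1}^m\Vert g_i\Vert_{qX(\mu)}^q=\sum_{i=1}^m\sup\sum_{k=1}^{n_i}\Vert g_{i,k}\Vert_{X(\mu)}^q\le\Vert g\Vert_{qX(\mu)}^q,
\]
where the interchange of the finite sum over $i$ with the suprema is justified by choosing, for each $\varepsilon>0$ and each $i$, a decomposition nearly attaining $\Vert g_i\Vert_{qX(\mu)}^q$ and concatenating. Taking $q$-th roots gives the $q$-concavity inequality with constant $1$, so $qX(\mu)$ is $q$-concave.

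Finally, $\sigma$-order continuity is immediate: $qX(\mu)$ is a $q$-concave quasi-B.f.s., so Proposition \ref{PROP: q-concaveImpliesSigmaoc} applies directly and yields the second assertion. The only mild subtlety — the ``main obstacle'' such as it is — is the interchange of the finite sum over $i$ with the defining suprema; this is handled by the standard $\varepsilon$-approximation and concatenation argument sketched above, and there is no real analytic difficulty since everything reduces to the single defining inequality of $\Vert\cdot\Vert_{qX(\mu)}$.
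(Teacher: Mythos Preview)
Your argument is essentially identical to the paper's: both proofs concatenate the individual decompositions $(g_{i,k})_k$ of each $g_i$ into a single decomposition of $g=(\sum_i|g_i|^q)^{1/q}$, apply the definition of $\Vert g\Vert_{qX(\mu)}$, and then take the supremum over decompositions of each $g_i$ separately; the $\sigma$-order continuity is then read off from Proposition~\ref{PROP: q-concaveImpliesSigmaoc}. One small slip: your justification that $g\in X(\mu)$ via ``$|g|^q=\sum_i|g_i|^q\in X(\mu)$ (a finite sum of elements of $X(\mu)$)'' is not valid, since $|g_i|^q$ need not belong to $X(\mu)$; the correct (and immediate) argument is the pointwise bound $g\le m^{1/q}\max_i|g_i|$, which the paper leaves implicit.
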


\begin{proof}
Let $(f_j)_{j=1}^n\subset qX(\mu)$ and consider
$(h_k^j)_{k=1}^{m_j}\subset X(\mu)$ with
$|f_j|=\big(\sum_{k=1}^{m_j}|h_k^j|^q\big)^{\frac{1}{q}}$
$\mu$-a.e.\ for each $j$. Since
$\big(\sum_{j=1}^n|f_j|^q\big)^{\frac{1}{q}}=\big(\sum_{j=1}^n\sum_{k=1}^{m_j}|h_k^j|^q\big)^{\frac{1}{q}}$
$\mu$-a.e., it follows that
$$
\sum_{j=1}^n\sum_{k=1}^{m_j}\Vert h_k^j\Vert_{X(\mu)}^q\le
\Big\Vert\Big(\sum_{j=1}^n|f_j|^q\Big)^{\frac{1}{q}}\Big\Vert_{qX(\mu)}^q.
$$
Taking supremum for each $j=1,...,n$ over all
$(h_k^j)_{k=1}^{m_j}\subset X(\mu)$ with
$|f_j|=\big(\sum_{k=1}^{m_j}|h_k^j|^q\big)^{\frac{1}{q}}$
$\mu$-a.e., we have that
$$
\sum_{j=1}^n\Vert f_j\Vert_{qX(\mu)}^q\le
\Big\Vert\Big(\sum_{j=1}^n|f_j|^q\Big)^{\frac{1}{q}}\Big\Vert_{qX(\mu)}^q
$$
and so $qX(\mu)$ is $q$-concave. The $\sigma$-order continuity is
given by Proposition \ref{PROP: q-concaveImpliesSigmaoc}.
\end{proof}

Even more, the following proposition shows that $qX(\mu)$ is in fact
the \emph{$q$-concave core} of $X(\mu)$, that is, the largest
$q$-concave quasi-B.f.s.\ related to $\mu$ contained in $X(\mu)$. In
particular, $qX(\mu)=X(\mu)$ whenever $X(\mu)$ is $q$-concave.

\begin{proposition}\label{PROP: q-concaveCore}
Let $Z(\xi)$ be a quasi-B.f.s.\ with $\mu\ll\xi$. The following
statements are equivalent:
\begin{itemize}\setlength{\leftskip}{-3ex}\setlength{\itemsep}{.5ex}
\item[(a)] $[i]\colon Z(\xi)\to X(\mu)$ is
well defined and $q$-concave.

\item[(b)] $[i]\colon Z(\xi)\to qX(\mu)$ is well defined.
\end{itemize}
In particular, $qX(\mu)$ is the $q$-concave core of $X(\mu)$.
\end{proposition}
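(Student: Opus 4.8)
The plan is to prove the two implications of the equivalence and then deduce the ``in particular'' clause. For $(b)\Rightarrow(a)$: assuming $[i]\colon Z(\xi)\to qX(\mu)$ is well defined, it is automatically continuous (it is a positive linear operator between quasi-Banach lattices, or more directly an inclusion between quasi-B.f.s.'), so there is $M>0$ with $\Vert f\Vert_{qX(\mu)}\le M\Vert f\Vert_{Z(\xi)}$. Composing with the $q$-concave identity $i\colon qX(\mu)\to qX(\mu)$ (which has $q$-concavity constant $1$ by the previous proposition) and then with the norm-one inclusion $qX(\mu)\hookrightarrow X(\mu)$, one gets for any finite $(f_j)_{j=1}^n\subset Z(\xi)$,
\begin{eqnarray*}
\Big(\sum_{j=1}^n\Vert [i](f_j)\Vert_{X(\mu)}^q\Big)^{\frac{1}{q}}
&\le&\Big(\sum_{j=1}^n\Vert f_j\Vert_{qX(\mu)}^q\Big)^{\frac{1}{q}}
\le\Big\Vert\Big(\sum_{j=1}^n|f_j|^q\Big)^{\frac{1}{q}}\Big\Vert_{qX(\mu)}\\
&\le& M\,\Big\Vert\Big(\sum_{j=1}^n|f_j|^q\Big)^{\frac{1}{q}}\Big\Vert_{Z(\xi)},
\end{eqnarray*}
so $[i]\colon Z(\xi)\to X(\mu)$ is $q$-concave (it is well defined as a map into $X(\mu)$ since it lands in $qX(\mu)\subset X(\mu)$).

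For $(a)\Rightarrow(b)$: suppose $[i]\colon Z(\xi)\to X(\mu)$ is well defined and $q$-concave with constant $C$. Fix $f\in Z(\xi)$; then $f\in X(\mu)$, and I must show $f\in qX(\mu)$ with a quasi-norm bound. Take any finite $(f_j)_{j=1}^n\subset X(\mu)$ with $|f|=\big(\sum_{j=1}^n|f_j|^q\big)^{1/q}$ $\mu$-a.e. The key point is that each $|f_j|\le|f|$ $\mu$-a.e., and since $Z(\xi)$ is an ideal in $L^0(\xi)$ and $f\in Z(\xi)$, we get $f_j\in Z(\xi)$ with $\Vert f_j\Vert_{Z(\xi)}\le\Vert f\Vert_{Z(\xi)}$. (Here one uses $\mu\ll\xi$ so that the $\mu$-a.e.\ relation $|f_j|\le|f|$ lifts to a $\xi$-a.e.\ relation, via the map $[i]$; this is the one spot requiring a little care.) Now apply $q$-concavity of $[i]\colon Z(\xi)\to X(\mu)$ to the family $(f_j)_{j=1}^n$, using that $[i](f_j)=f_j$ in $X(\mu)$:
$$
\Big(\sum_{j=1}^n\Vert f_j\Vert_{X(\mu)}^q\Big)^{\frac{1}{q}}
\le C\,\Big\Vert\Big(\sum_{j=1}^n|f_j|^q\Big)^{\frac{1}{q}}\Big\Vert_{Z(\xi)}
= C\,\Vert f\Vert_{Z(\xi)}.
$$
Taking the supremum over all such representations of $|f|$ gives $f\in qX(\mu)$ with $\Vert f\Vert_{qX(\mu)}\le C\Vert f\Vert_{Z(\xi)}$, so $[i]\colon Z(\xi)\to qX(\mu)$ is well defined (and continuous).

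Finally, for the ``in particular'' statement: a quasi-B.f.s.\ $Z(\xi)$ with $\mu\ll\xi$ ``contained in $X(\mu)$'' means precisely that $[i]\colon Z(\xi)\to X(\mu)$ is well defined, and $Z(\xi)$ is $q$-concave exactly when this inclusion, composed with the $q$-concave identity of $Z(\xi)$, is $q$-concave — equivalently (arguing as above, since the inclusion is norm-decreasing after renorming, or simply because $q$-concavity of $i_{Z}$ transfers along the bounded inclusion) when $[i]\colon Z(\xi)\to X(\mu)$ is $q$-concave. By the equivalence just proved, this happens iff $Z(\xi)$ embeds into $qX(\mu)$; since $qX(\mu)$ is itself a $q$-concave quasi-B.f.s.\ contained in $X(\mu)$ (previous two propositions), it is the largest such, i.e.\ the $q$-concave core. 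The case $qX(\mu)=X(\mu)$ when $X(\mu)$ is $q$-concave follows by taking $Z(\xi)=X(\mu)$ in $(a)$.

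The main obstacle I expect is the bookkeeping around the two different measures $\mu$ and $\xi$ and the map $[i]$: one must make sure that ``$|f_j|\le|f|$ $\mu$-a.e.\ with $f\in Z(\xi)$'' really does force $f_j\in Z(\xi)$, which rests on $\mu\ll\xi$ and the ideal property of $Z(\xi)$, and similarly that the $q$-concavity inequality for $[i]$ may be applied with the $X(\mu)$-representation of $|f|$ rather than a $\xi$-representation. Everything else is a routine chaining of the (automatic) continuity of inclusions between quasi-B.f.s.' with the $q$-concavity estimates and the definition of $\Vert\cdot\Vert_{qX(\mu)}$ as a supremum.
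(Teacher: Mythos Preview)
Your overall strategy matches the paper's proof, and the (b)$\Rightarrow$(a) direction and the ``in particular'' clause are fine. However, in (a)$\Rightarrow$(b) the justification you give at the very spot you flag as delicate is backwards. You write that one ``uses $\mu\ll\xi$ so that the $\mu$-a.e.\ relation $|f_j|\le|f|$ lifts to a $\xi$-a.e.\ relation.'' But $\mu\ll\xi$ means precisely that $\xi$-null sets are $\mu$-null, so it is $\xi$-a.e.\ relations that descend to $\mu$-a.e.\ ones, not the other way around. A $\mu$-null exceptional set may well have positive $\xi$-measure, and the $f_j$ are only given as $\mu$-classes in $X(\mu)$, so ``$f_j\in Z(\xi)$'' is not even well posed as stated.

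The paper's fix is exactly what is needed and is very close to what you intend: let $N$ be the $\mu$-null set on which $|f|\ne\big(\sum_{j=1}^n|f_j|^q\big)^{1/q}$ and replace each $f_j$ by the concrete function $f_j\chi_{\Omega\setminus N}$. Then $|f_j\chi_{\Omega\setminus N}|\le|f|$ holds \emph{pointwise}, hence certainly $\xi$-a.e., so $f_j\chi_{\Omega\setminus N}\in Z(\xi)$ by the ideal property of $Z(\xi)$. Since $N$ is $\mu$-null, $f_j\chi_{\Omega\setminus N}=f_j$ in $X(\mu)$, and $\big(\sum_{j}|f_j|^q\big)^{1/q}\chi_{\Omega\setminus N}\le|f|$ pointwise as well; applying the $q$-concavity of $[i]\colon Z(\xi)\to X(\mu)$ to the family $(f_j\chi_{\Omega\setminus N})_{j=1}^n\subset Z(\xi)$ then yields
\[
\Big(\sum_{j=1}^n\Vert f_j\Vert_{X(\mu)}^q\Big)^{1/q}
=\Big(\sum_{j=1}^n\Vert f_j\chi_{\Omega\setminus N}\Vert_{X(\mu)}^q\Big)^{1/q}
\le C\,\Big\Vert\Big(\sum_{j=1}^n|f_j|^q\Big)^{1/q}\chi_{\Omega\setminus N}\Big\Vert_{Z(\xi)}
\le C\,\Vert f\Vert_{Z(\xi)}.
\]
Note that $\mu\ll\xi$ plays no role at this step beyond making $[i]$ well defined in the first place; the work is done by choosing a good representative, not by lifting a.e.\ relations.
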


\begin{proof}
(a) $\Rightarrow$ (b) Denote by $C$ the $q$-concavity constant of
the operator $[i]\colon Z(\xi)\to X(\mu)$. Let $f\in Z(\xi)$ (so
$f\in X(\mu)$) and $(f_j)_{j=1}^{n}\subset X(\mu)$ with
$|f|=\big(\sum_{j=1}^n|f_j|^q\big)^{\frac{1}{q}}$ except on a
$\mu$-null set $N$. Since $|f_j|\chi_{\Omega\backslash N}\le |f|$
pointwise (so $\xi$-a.e.), then $f_j\chi_{\Omega\backslash N}\in
Z(\xi)$. Noting that $f_j=f_j\chi_{\Omega\backslash N}$ $\mu$-a.e.,
it follows that
\begin{eqnarray*}
\Big(\sum_{j=1}^n\Vert f_j\Vert_{X(\mu)}^q\Big)^{\frac{1}{q}} & = &
\Big(\sum_{j=1}^n\Vert f_j\chi_{\Omega\backslash
N}\Vert_{X(\mu)}^q\Big)^{\frac{1}{q}} \\ & \le & C\,\Big\Vert
\Big(\sum_{j=1}^n|f_j|^q\Big)^{\frac{1}{q}}\chi_{\Omega\backslash
N}\Big\Vert_{Z(\xi)}\le C\,\Vert f\Vert_{Z(\xi)}.
\end{eqnarray*}
Hence $f\in qX(\mu)$ with $\Vert f\Vert_{qX(\mu)}\le C\,\Vert
f\Vert_{Z(\xi)}$.

(b) $\Rightarrow$ (a) Clearly $[i]\colon Z(\xi)\to X(\mu)$ is well
defined as $qX(\mu)\subset X(\mu)$. Denote by $M$ the continuity
constant of $[i]\colon Z(\xi)\to qX(\mu)$ (recall that every
positive operator between quasi-B.f.s.'\ is continuous). For every
$(f_j)_{j=1}^n\subset Z(\xi)$ we have that
$\big(\sum_{j=1}^n|f_j|^q\big)^{\frac{1}{q}}$ is in $qX(\mu)$ as it
is in $Z(\xi)$, and so
\begin{eqnarray*}
\Big(\sum_{j=1}^n\Vert f_j\Vert_{X(\mu)}^q\Big)^{\frac{1}{q}} & \le
&\Big\Vert
\Big(\sum_{j=1}^n|f_j|^q\Big)^{\frac{1}{q}}\Big\Vert_{qX(\mu)} \\ &
\le & M\,\Big\Vert
\Big(\sum_{j=1}^n|f_j|^q\Big)^{\frac{1}{q}}\Big\Vert_{Z(\xi)}.
\end{eqnarray*}
Hence, $[i]\colon Z(\xi)\to X(\mu)$ is $q$-concave.

In particular, if $Z(\mu)$ is a $q$-concave quasi-B.f.s.\ such that
$Z(\mu)\subset X(\mu)$, we have that $i\colon Z(\mu)\to X(\mu)$ is
well defined, continuous and so $q$-concave. Then, from (a)
$\Rightarrow$ (b) we have that $Z(\mu)\subset qX(\mu)$.
\end{proof}

For $p\in(0,\infty)$, the $p$-power of $qX(\mu)$ can be described in
terms of the $p$-power of $X(\mu)$.

\begin{proposition}\label{PROP: pPower-qX(mu)}
The equality $\big(qX(\mu)\big)^p=qpX(\mu)^p$ holds with equal
norms.
\end{proposition}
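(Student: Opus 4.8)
The plan is to unwind both sides to a supremum of one and the same quantity over one and the same family of representations, via the substitution $g_j=|f_j|^p$. Note first that both spaces are meaningful: $X(\mu)^p$ is $\sigma$-order continuous by Remark~\ref{REM: XpResults}(a), and $qX(\mu)$, being $q$-concave, is $\sigma$-order continuous by Proposition~\ref{PROP: q-concaveImpliesSigmaoc}. By the definition of $p$-powers, a function $f\in L^0(\mu)$ belongs to $\big(qX(\mu)\big)^p$ precisely when $|f|^p\in qX(\mu)$, and then $\|f\|_{(qX(\mu))^p}=\big\||f|^p\big\|_{qX(\mu)}^{1/p}$. Hence it suffices to fix $f$ with $|f|^p\in X(\mu)$ (i.e.\ $f\in X(\mu)^p$) and prove the scalar identity $\|f\|_{qpX(\mu)^p}^{\,p}=\big\||f|^p\big\|_{qX(\mu)}$, with the convention that either side is $+\infty$ when the corresponding membership fails; this single equality delivers at once the equality of the underlying sets and of the norms.

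The core of the argument is the change of variables. In the definitions of $\|\cdot\|_{qpX(\mu)^p}$ and of $\|\cdot\|_{qX(\mu)}$ only the moduli of the functions appearing in a representation play a role, so throughout one may restrict to nonnegative representatives. Given nonnegative $f_1,\dots,f_n\in X(\mu)^p$, set $g_j=f_j^{\,p}\in X(\mu)$. Using the elementary identity $f_j^{\,qp}=(f_j^{\,p})^q=g_j^{\,q}$ one checks that $|f|=\big(\sum_{j=1}^n f_j^{\,qp}\big)^{1/(qp)}$ $\mu$-a.e.\ holds if and only if $|f|^p=\big(\sum_{j=1}^n g_j^{\,q}\big)^{1/q}$ $\mu$-a.e., and the map $g_j\mapsto f_j=g_j^{1/p}$ inverts the construction; thus representations of $|f|$ of the form $\big(\sum_j f_j^{\,qp}\big)^{1/(qp)}$ in $X(\mu)^p$ correspond bijectively to representations of $|f|^p$ of the form $\big(\sum_j g_j^{\,q}\big)^{1/q}$ in $X(\mu)$. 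Moreover $\|f_j\|_{X(\mu)^p}^{\,qp}=\big(\big\|f_j^{\,p}\big\|_{X(\mu)}^{1/p}\big)^{qp}=\|g_j\|_{X(\mu)}^{\,q}$, so matching representations contribute identical sums.

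Putting this together, for $f\in X(\mu)^p$ we obtain
\[
\|f\|_{qpX(\mu)^p}^{\,qp}=\sup\sum_{j=1}^n\|f_j\|_{X(\mu)^p}^{\,qp}=\sup\sum_{j=1}^n\|g_j\|_{X(\mu)}^{\,q}=\big\||f|^p\big\|_{qX(\mu)}^{\,q},
\]
where the first supremum runs over all representations $|f|=\big(\sum_j f_j^{\,qp}\big)^{1/(qp)}$ with $0\le f_j\in X(\mu)^p$, the second over all $|f|^p=\big(\sum_j g_j^{\,q}\big)^{1/q}$ with $0\le g_j\in X(\mu)$ (the two being equal by the preceding paragraph), and the last equality is just the definition of $\|\cdot\|_{qX(\mu)}$ evaluated at $|f|^p$. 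Taking $q$-th roots gives $\|f\|_{qpX(\mu)^p}^{\,p}=\big\||f|^p\big\|_{qX(\mu)}=\|f\|_{(qX(\mu))^p}^{\,p}$, which is the required identity.

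I do not expect a genuine obstacle here: once the substitution $g_j=f_j^{\,p}$ is in place the computation is purely formal. The only points deserving attention are the verification that the representation constraint transforms correctly under this substitution (which amounts to raising the defining equation to a power and invoking $f_j^{\,qp}=(f_j^{\,p})^q$) and the remark that replacing representatives by their moduli leaves all the suprema unchanged, so that the substitution is a genuine bijection between the two families of representations.
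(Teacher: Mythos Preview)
Your proof is correct and follows essentially the same approach as the paper's: both rely on the substitution $g_j=|f_j|^p$ (and its inverse $|f_j|=g_j^{1/p}$) to pass between representations of $|f|$ in $X(\mu)^p$ and representations of $|f|^p$ in $X(\mu)$, together with the identity $\|f_j\|_{X(\mu)^p}^{qp}=\|\,|f_j|^p\,\|_{X(\mu)}^{q}$. The only cosmetic difference is that the paper writes out the two inclusions separately, whereas you package the same computation as a single bijection between the two families of representations.
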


\begin{proof}
Let $f\in \big(qX(\mu)\big)^p$. Since $|f|^p\in qX(\mu)$, in
particular $|f|^p\in X(\mu)$ and so $f\in X(\mu)^p$. Consider
$(f_j)_{j=1}^{n}\subset X(\mu)^p$ satisfying that
$|f|=\big(\sum_{j=1}^n|f_j|^{qp}\big)^{\frac{1}{qp}}$ $\mu$-a.e.
Noting that $(|f_j|^p)_{j=1}^{n}\subset X(\mu)$ and
$|f|^p=\big(\sum_{j=1}^n(|f_j|^p)^q\big)^{\frac{1}{q}}$ $\mu$-a.e.,
we have that
$$
\Big(\sum_{j=1}^n\Vert f_j\Vert_{X(\mu)^p}^{qp}\Big)^{\frac{1}{qp}}=
\Big(\sum_{j=1}^n\Vert\,
|f_j|^p\,\Vert_{X(\mu)}^q\Big)^{\frac{1}{qp}}\le\Vert\,|f|^p\,\Vert_{qX(\mu)}^{\frac{1}{p}}
=\Vert f\Vert_{(qX(\mu))^p}.
$$
Then, $f\in qpX(\mu)^p$ and $\Vert f\Vert_{qpX(\mu)^p}\le\Vert
f\Vert_{(qX(\mu))^p}$.

Let now $f\in qpX(\mu)^p$. In particular $f\in X(\mu)^p$ and so
$|f|^p\in X(\mu)$. Consider $(f_j)_{j=1}^{n}\subset X(\mu)$
satisfying that $|f|^p=\big(\sum_{j=1}^n|f_j|^q\big)^{\frac{1}{q}}$
$\mu$-a.e. Noting that $(|f_j|^{\frac{1}{p}})_{j=1}^{n}\subset
X(\mu)^p$ and
$|f|=\big(\sum_{j=1}^n(|f_j|^{\frac{1}{p}})^{qp}\big)^{\frac{1}{qp}}$
$\mu$-a.e., we have that
$$
\Big(\sum_{j=1}^n\Vert f_j\Vert_{X(\mu)}^q\Big)^{\frac{1}{q}}=
\Big(\sum_{j=1}^n\Vert\,
|f_j|^{\frac{1}{p}}\,\Vert_{X(\mu)^p}^{qp}\Big)^{\frac{1}{q}}\le\Vert
f\Vert_{qpX(\mu)^p}^p.
$$
Then, $|f|^p\in qX(\mu)$ and $\Vert\,|f|^p\,\Vert_{qX(\mu)}\le\Vert
f\Vert_{qpX(\mu)^p}^p$. Hence, $f\in \big(qX(\mu)\big)^p$ and $\Vert
f\Vert_{(qX(\mu))^p}=\Vert\,|f|^p\,\Vert_{qX(\mu)}^{\frac{1}{p}}\le\Vert
f\Vert_{qpX(\mu)^p}$.
\end{proof}

%%% ----------------------------------------------------------------------

\section{Optimal domain for $(p,q)$-power-concave operators}
\label{SEC: (p,q)PowerConcave-quasiBfs}

Let $X(\mu)$ be a $\sigma$-order continuous quasi-B.f.s.\ satisfying
what we call the \emph{$\sigma$-property}:
$$
\Omega=\cup \Omega_n \textnormal{ with } \chi_{\Omega_n}\in X(\mu)
\textnormal{ for all } n,
$$
and let $T\colon X(\mu)\to E$ be a continuous linear operator with
values in a Banach space $E$. We consider the $\delta$-ring
$$
\Sigma_{X(\mu)}=\big\{A\in\Sigma:\, \chi_A\in X(\mu)\big\}
$$
and the vector measure $m_T\colon\Sigma_{X(\mu)}\to E$ given by
$m_T(A)=T(\chi_A)$. Note that the $\sigma$-property of $X(\mu)$
guarantees that $\Sigma_{X(\mu)}^{loc}=\Sigma$ and since $\Vert
m_T\Vert\ll\mu$ we have that $[i]\colon L^0(\mu)\to L^0(m_T)$ is
well defined. Also note that a quasi-B.f.s.\ has the
$\sigma$-property if and only if it contains a function $g>0$
$\mu$-a.e.

As an extension of \cite[\S\,3]{calabuig-delgado-sanchezperez} to
quasi-B.f.s.', in \cite{delgado-sanchezperez} it is proved that
$[i]\colon X(\mu)\to L^1(m_T)$ is well defined and
$T=I_{m_T}\circ[i]$. Even more, $L^1(m_T)$ is the largest
$\sigma$-order continuous quasi-B.f.s.\ with this property. That
means, if $Z(\xi)$ is a $\sigma$-order continuous quasi-B.f.s.\ with
$\xi\ll\mu$ and $T$ factors as
$$
\xymatrix{
X(\mu) \ar[rr]^{T} \ar@{.>}[dr]_(.45){[i]} & & E\\
& Z(\xi) \ar@{.>}[ur]_(.5){S}}
$$
with $S$ being a continuous linear operator, then $[i]\colon
Z(\xi)\to L^1(m_T)$ is well defined and $S=I_{m_T}\circ[i]$. In
other words, $L^1(m_T)$ is the optimal domain to which $T$ can be
extended preserving continuity.

In this section we present the main results of the paper, including
a description of the optimal domain for $T$ (when $T$ is
$q$-concave) preserving $q$-concavity. First, we have to provide a
natural non-finite measure version of the so called $p$-th power
factorable operators, which were developed for the first time in
\cite[\S\,5.1]{okada-ricker-sanchezperez} for the case of finite
measures. For $p\in(0,\infty)$, we say that $T$ is a \emph{$p$-th
power factorable operator with a continuous extension} if there is a
continuous linear extension of $T$ to $X(\mu)^{\frac{1}{p}}+X(\mu)$,
i.e. $T$ factors as
$$
\xymatrix{
X(\mu) \ar[rr]^{T} \ar@{.>}[dr]_(.4){i} & & E\\
& X(\mu)^{\frac{1}{p}}+X(\mu) \ar@{.>}[ur]_(.6){S}}
$$
for a continuous linear operator $S$.

Regarding this definition and having in mind Remark \ref{REM:
XpResults}.(b), two standard cases must be considered whenever
$\chi_\Omega\in X(\mu)$. If $1<p$ we have that
$X(\mu)^{\frac{1}{p}}+X(\mu)=X(\mu)^{\frac{1}{p}}$, and then the
definition of $p$-th power factorable operator with a continuous
extension coincides with the one given in \cite[Definition
5.1]{okada-ricker-sanchezperez}. However, if $p\le1$ we have that
$X(\mu)^{\frac{1}{p}}+X(\mu)=X(\mu)$ and so $p$-th power factorable
operators with continuous extensions are just continuous operators.

The following result, which is proved in \cite{delgado-sanchezperez}
in order to find the optimal domain for $p$-th power factorable
operators, will be the starting point of our work in this section.
The proof is an adaptation to our setting of the proof given in
\cite[Theorem 5.7]{okada-ricker-sanchezperez} for the case when
$\mu$ is finite, $\chi_\Omega\in X(\mu)$ and $p\ge1$.

\begin{theorem}\label{THM: quasiBfsXsubset(LpCapL1)mT}
The following statements are equivalent.
\begin{itemize}\setlength{\leftskip}{-3ex}\setlength{\itemsep}{1ex}
\item[(a)] $T$ is $p$-th power factorable with a continuous extension.

\item[(b)] $[i]\colon X(\mu)^{\frac{1}{p}}+X(\mu)\to L^1(m_T)$ is well defined.

\item[(c)] $[i]\colon X(\mu)\to L^p(m_T)\cap L^1(m_T)$ is well defined.

\item[(d)] There exists $M>0$ such that $\Vert
Tf\Vert_E\le M\Vert f\Vert_{X(\mu)^{\frac{1}{p}}+X(\mu)}$ for all
$f\in X(\mu)$.
\end{itemize}
Moreover, if (a)-(d) holds, the extension of $T$ to
$X(\mu)^{\frac{1}{p}}+X(\mu)$ coincides with the integration
operator $I_{m_T}\circ[i]$.
\end{theorem}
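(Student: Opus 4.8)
The plan is to establish the three equivalences (a)$\Leftrightarrow$(d), (a)$\Leftrightarrow$(b) and (b)$\Leftrightarrow$(c), and to read the \emph{Moreover} assertion off the step (a)$\Rightarrow$(b). The implication (a)$\Rightarrow$(d) is immediate: restrict a continuous extension $S$ of $T$ to $X(\mu)\subset X(\mu)^{\frac{1}{p}}+X(\mu)$ and put $M=\Vert S\Vert$. For (d)$\Rightarrow$(a) the decisive observation is that $X(\mu)$ is dense in $X(\mu)^{\frac{1}{p}}+X(\mu)$. Indeed, by Remark \ref{REM: XpResults}.(a) and the stability of $\sigma$-order continuity under sums recalled in the Preliminaries, $X(\mu)^{\frac{1}{p}}+X(\mu)$ is $\sigma$-order continuous; so if $0\le f\in X(\mu)^{\frac{1}{p}}+X(\mu)$ and $\Omega=\bigcup_n\Omega_n$ with $\chi_{\Omega_n}\in X(\mu)$ (which we may take increasing, using that $\Sigma_{X(\mu)}$ is closed under finite unions), then the truncations $f_n=\min\{f,n\}\chi_{\Omega_n}$ satisfy $f-f_n\downarrow 0$ $\mu$-a.e., whence $\Vert f-f_n\Vert_{X(\mu)^{\frac{1}{p}}+X(\mu)}\to 0$, while $f_n\le n\chi_{\Omega_n}$ puts $f_n\in X(\mu)$; a splitting into positive and negative parts gives the general case. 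Granting (d), $T$ is bounded from $X(\mu)$ equipped with $\Vert\cdot\Vert_{X(\mu)^{\frac{1}{p}}+X(\mu)}$ into $E$, hence uniformly continuous, so it extends uniquely to a continuous linear operator on the quasi-Banach space $X(\mu)^{\frac{1}{p}}+X(\mu)$ (recall $E$ is complete), which is (a).

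For (a)$\Rightarrow$(b) I would invoke the optimal domain property of $L^1(m_T)$ recalled just before the statement, applied to $Z(\xi)=X(\mu)^{\frac{1}{p}}+X(\mu)$, which is a $\sigma$-order continuous quasi-B.f.s.\ carrying the measure $\mu$ itself, so trivially $\xi\ll\mu$: since $T=S\circ i$, where $i\colon X(\mu)\hookrightarrow X(\mu)^{\frac{1}{p}}+X(\mu)$ is the (automatically continuous) inclusion and $S$ is the extension from (a), that property forces $[i]\colon X(\mu)^{\frac{1}{p}}+X(\mu)\to L^1(m_T)$ to be well defined with $S=I_{m_T}\circ[i]$; this gives (b) and, because the extension of $T$ to $X(\mu)^{\frac{1}{p}}+X(\mu)$ is unique by the density just established, also the \emph{Moreover} claim. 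Conversely, if (b) holds then $[i]\colon X(\mu)^{\frac{1}{p}}+X(\mu)\to L^1(m_T)$ is continuous (a positive map between quasi-B.f.s.'), so $I_{m_T}\circ[i]$ is a continuous linear operator on $X(\mu)^{\frac{1}{p}}+X(\mu)$ whose restriction to $X(\mu)$ is $I_{m_T}\circ[i]=T$ (the base case recalled at the start of the section); this is (a).

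Finally, (b)$\Leftrightarrow$(c) is a matter of bookkeeping with powers and solidity. Since $[i]$ always maps $X(\mu)$ into $L^1(m_T)$ (the base case) and $L^1(m_T)$ is a vector space, (b) is equivalent to $[i]\big(X(\mu)^{\frac{1}{p}}\big)\subset L^1(m_T)$. As $X(\mu)$, $X(\mu)^{\frac{1}{p}}$, $L^p(m_T)$ and $L^1(m_T)$ are all order ideals it suffices to test on nonnegative functions, and for $g\ge 0$ one has $g\in X(\mu)^{\frac{1}{p}}\iff g^{1/p}\in X(\mu)$ and $h\in L^p(m_T)\iff h^p\in L^1(m_T)$; chaining these (with $h=g^{1/p}$) shows $[i]\big(X(\mu)^{\frac{1}{p}}\big)\subset L^1(m_T)$ if and only if $[i]\big(X(\mu)\big)\subset L^p(m_T)$, which, since $[i](X(\mu))\subset L^1(m_T)$ automatically, is precisely (c). I do not anticipate any genuine difficulty; the only points requiring care are the density of $X(\mu)$ in $X(\mu)^{\frac{1}{p}}+X(\mu)$ used for (d)$\Rightarrow$(a) and, relatedly, keeping track of exactly which quasi-norm appears in (d) — both handled once the $\sigma$-property and $\sigma$-order continuity are brought in.
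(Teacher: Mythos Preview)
Your proposal is correct and follows essentially the same route as the paper's own (sketched) argument: (a)$\Rightarrow$(b) via the optimality of $L^1(m_T)$, (b)$\Leftrightarrow$(c) as a direct check on $p$-powers, (b)$\Rightarrow$(d) (in your version, (b)$\Rightarrow$(a)$\Rightarrow$(d)) via automatic continuity of positive maps, and (d)$\Rightarrow$(a) by the approximation $f_n=\min\{f,n\}\chi_{\Omega_n}$ afforded by the $\sigma$-property and $\sigma$-order continuity. The only cosmetic difference is that the paper closes the cycle as (a)$\Rightarrow$(b)$\Rightarrow$(d)$\Rightarrow$(a), whereas you pair off (a)$\Leftrightarrow$(d) and (a)$\Leftrightarrow$(b); the ingredients are identical.
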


In a brief overview, (a) implies (b) and the fact that the extension
of $T$ to $X(\mu)^{\frac{1}{p}}+X(\mu)$ is just $I_{m_T}\circ[i]$
follow from the optimality of $L^1(m_T)$. Note that
$X(\mu)^{\frac{1}{p}}+X(\mu)$ is $\sigma$-order continuous as
$X(\mu)$ is so. The equivalence between (b) and (c) is a direct
check. Statement (b) implies (d) since $[i]\colon
X(\mu)^{\frac{1}{p}}+X(\mu)\to L^1(m_T)$ is continuous (as it is
positive) and $T=I_{m_T}\circ[i]$. Finally, (d) implies (a) is based
on a standard argument which use the approximation of a measurable
function through functions in $X(\mu)$ (possible by the
$\sigma$-property) to construct an extension of $T$ to
$X(\mu)^{\frac{1}{p}}+X(\mu)$. For a detailed proof of Theorem
\ref{THM: quasiBfsXsubset(LpCapL1)mT} see
\cite{delgado-sanchezperez}, where moreover it is proved that if $T$
is $p$-th power factorable with a continuous extension then
$L^p(m_T)\cap L^1(m_T)$ is the optimal domain to which $T$ can be
extended preserving this property.

Now, let us go to the new results on optimal domains. We consider
the following property stronger than $p$-th power factorable and
look for its optimal domain.

For $p,q\in(0,\infty)$, we say that $T$ is
\emph{$(p,q)$-power-concave} if there exists a constant $C>0$ such
that
$$
\Big(\sum_{j=1}^n\Vert
T(f_j)\Vert_E^{\frac{q}{p}}\Big)^{\frac{p}{q}}\le
C\,\Big\Vert\Big(\sum_{j=1}^n|f_j|^{\frac{q}{p}}\Big)^{\frac{p}{q}}\Big\Vert_{X(\mu)^{\frac{1}{p}}+X(\mu)}
$$
for every finite subset $(f_j)_{j=1}^n\subset X(\mu)$. If
$\chi_\Omega\in X(\mu)$ and $p\ge1$ we have that
$X(\mu)^{\frac{1}{p}}+X(\mu)=X(\mu)^{\frac{1}{p}}$, and then our
definition of $(p,q)$-power-concave operator coincides with the one
given in \cite[Definition 6.1]{okada-ricker-sanchezperez}.

\begin{remark}\label{REM: (p,q)PowerConcave}
The following statements hold:
\begin{itemize}\setlength{\leftskip}{-2.5ex}\setlength{\itemsep}{1ex}
\item[(i)] A $(1,q)$-power-concave operator is just a $q$-concave operator.

\item[(ii)] If $T$ is $(p,q)$-power-concave then $T$ is
$\frac{q}{p}$-concave, as $X(\mu)\subset
X(\mu)^{\frac{1}{p}}+X(\mu)$ continuously.

\item[(iii)] If $\chi_\Omega\in X(\mu)$ and $p<1$, since $X(\mu)^{\frac{1}{p}}+X(\mu)=X(\mu)$,
we have that $(p,q)$-power-concavity coincides with
$\frac{q}{p}$-concavity.

\item[(iv)] If $T$ is
$(p,q)$-power-concave then $T$ is $p$-th power factorable with a
continuous extension. Indeed, the $(p,q)$-power-concave inequality
applied to an unique function is just the item (d) of Theorem
\ref{THM: quasiBfsXsubset(LpCapL1)mT}
\end{itemize}
\end{remark}

As we will see in the next result, $(p,q)$-power-concavity is close
related to the following property. We say that $T$ is \emph{$p$-th
power factorable with a $q$-concave extension} if there exists a
$q$-concave linear extension of $T$ to
$X(\mu)^{\frac{1}{p}}+X(\mu)$, i.e. $T$ factors as
$$
\xymatrix{
X(\mu) \ar[rr]^{T} \ar@{.>}[dr]_(.45){i} & & E\\
& X(\mu)^{\frac{1}{p}}+X(\mu) \ar@{.>}[ur]_(.55){S}}
$$
with $S$ being a $q$-concave linear operator. In this case, it is
direct to check that $T$ is $q$-concave.

\begin{theorem}\label{THM: Xsubset(q/pL1CapqLp)mT}
The following statements are equivalent:
\begin{itemize}\setlength{\leftskip}{-2.5ex}\setlength{\itemsep}{1ex}
\item[(a)] $T$ is $(p,q)$-power-concave.

\item[(b)] $T$ is $p$-th power factorable with a $\frac{q}{p}$-concave extension.

\item[(c)] $[i]\colon X(\mu)^{\frac{1}{p}}+X(\mu)\to L^1(m_T)$ is well defined and
$\frac{q}{p}$-concave.

\item[(d)] $[i]\colon X(\mu)\to L^1(m_T)$ is well defined and $\frac{q}{p}$-concave, and
$[i]\colon X(\mu)\to L^p(m_T)$ is well defined and $q$-concave.

\item[(e)] $[i]\colon X(\mu)\to \frac{q}{p}L^1(m_T)\cap qL^p(m_T)$ is well
defined.
\end{itemize}
Moreover, if (a)-(e) holds, the extension of $T$ to
$X(\mu)^{\frac{1}{p}}+X(\mu)$ coincides with the integration
operator $I_{m_T}\circ[i]$.
\end{theorem}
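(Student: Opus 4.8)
\emph{Plan.} I will close the cycle $(a)\Rightarrow(c)\Rightarrow(b)\Rightarrow(a)$ and then establish $(c)\Leftrightarrow(d)\Leftrightarrow(e)$ separately. The final ``Moreover'' assertion is then immediate: by Remark \ref{REM: (p,q)PowerConcave}.(iv), $(a)$ implies that $T$ is $p$-th power factorable with a continuous extension, so the ``Moreover'' clause of Theorem \ref{THM: quasiBfsXsubset(LpCapL1)mT} identifies that extension with $I_{m_T}\circ[i]$. In particular, throughout I may use that, under $(a)$, Theorem \ref{THM: quasiBfsXsubset(LpCapL1)mT} already gives that $[i]\colon X(\mu)^{\frac1p}+X(\mu)\to L^1(m_T)$ is well defined and continuous (being positive) and that $T=I_{m_T}\circ[i]$ on $X(\mu)$.

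The core is $(a)\Rightarrow(c)$, carried out in two steps. \emph{Step 1} proves the $\frac qp$-concavity estimate for families in $X(\mu)$: for $f\in X(\mu)$ and an $\mathcal R$-simple $\varphi$ with $|\varphi|\le1$ one has $f\varphi\in X(\mu)$ and $\int_\Omega f\varphi\,dm_T=T(f\varphi)$, so \eqref{EQ: L1m-intnorm} gives $\Vert f\Vert_{m_T}=\sup\{\Vert T(f\varphi)\Vert_E:\varphi\in\mathcal S(\mathcal R),\,|\varphi|\le1\}$. Given $(f_j)_{j=1}^n\subset X(\mu)$ and $\varepsilon>0$, choose $\mathcal R$-simple $\varphi_j$ with $|\varphi_j|\le1$ and $\Vert T(f_j\varphi_j)\Vert_E^{q/p}>\Vert f_j\Vert_{m_T}^{q/p}-\varepsilon/n$; applying $(p,q)$-power-concavity to $(f_j\varphi_j)_j\subset X(\mu)$, using $|f_j\varphi_j|\le|f_j|$ and letting $\varepsilon\to0$ yields
$$\Big(\sum_{j=1}^n\Vert f_j\Vert_{m_T}^{q/p}\Big)^{p/q}\le C\,\Big\Vert\Big(\sum_{j=1}^n|f_j|^{q/p}\Big)^{p/q}\Big\Vert_{X(\mu)^{\frac1p}+X(\mu)}.$$
\emph{Step 2} passes to general $(g_j)_{j=1}^n\subset X(\mu)^{\frac1p}+X(\mu)$ by truncation: with $h=\big(\sum_j|g_j|^{q/p}\big)^{p/q}\in X(\mu)^{\frac1p}+X(\mu)$, $\Omega=\cup_k\Omega_k$, $\chi_{\Omega_k}\in X(\mu)$, $\Omega_k\uparrow$, put $h_k=\min(h,k\chi_{\Omega_k})\in X(\mu)$ and $g_j^{(k)}=g_j h_k/h$ (and $0$ where $h=0$). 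Then $|g_j^{(k)}|\le h_k\in X(\mu)$, so $g_j^{(k)}\in X(\mu)$, and $\big(\sum_j|g_j^{(k)}|^{q/p}\big)^{p/q}=h_k$. Applying Step 1 to $(g_j^{(k)})_j$, bounding $\Vert h_k\Vert\le\Vert h\Vert$ in $X(\mu)^{\frac1p}+X(\mu)$, and letting $k\to\infty$ — so $g_j^{(k)}\to g_j$ in the $\sigma$-order continuous space $X(\mu)^{\frac1p}+X(\mu)$ and hence $\Vert g_j^{(k)}\Vert_{m_T}\to\Vert g_j\Vert_{m_T}$ by continuity of $[i]$ and the fact that $L^1(m_T)$ is a \emph{Banach} function space — gives the $\frac qp$-concavity of $[i]\colon X(\mu)^{\frac1p}+X(\mu)\to L^1(m_T)$, i.e.\ $(c)$.

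For $(c)\Rightarrow(b)$ take $S=I_{m_T}\circ[i]$: composing the $\frac qp$-concave $[i]$ with the bounded $I_{m_T}$ preserves $\frac qp$-concavity, and $S$ extends $T$. $(b)\Rightarrow(a)$ is the restriction of the $\frac qp$-concave extension to $X(\mu)$. For $(c)\Rightarrow(d)$: restricting $(c)$ to $X(\mu)$ (using $\Vert\cdot\Vert_{X(\mu)^{\frac1p}+X(\mu)}\le\Vert\cdot\Vert_{X(\mu)}$ there) gives $\frac qp$-concavity of $[i]\colon X(\mu)\to L^1(m_T)$; applying the $\frac qp$-concavity of $[i]$ to the functions $|f_j|^p\in X(\mu)^{\frac1p}$, and using $\Vert\,|f_j|^p\,\Vert_{m_T}=\Vert f_j\Vert_{L^p(m_T)}^p$ together with $\Vert h^p\Vert_{X(\mu)^{\frac1p}}=\Vert h\Vert_{X(\mu)}^p$ (where $h=(\sum_j|f_j|^q)^{1/q}\in X(\mu)$), yields $q$-concavity of $[i]\colon X(\mu)\to L^p(m_T)$. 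For $(d)\Rightarrow(c)$: the two well-definedness statements in $(d)$ force $X(\mu)^{\frac1p}\subset L^1(m_T)$ (write $g=(|g|^{1/p})^p$ with $|g|^{1/p}\in X(\mu)\subset L^p(m_T)$), and by Lemma \ref{LEM: q-concaveOperatorOnX+Y} it remains to check $\frac qp$-concavity of $[i]$ on $X(\mu)^{\frac1p}$, which follows from the $q$-concavity of $[i]\colon X(\mu)\to L^p(m_T)$ applied to $f_j=|g_j|^{1/p}$ by the same power bookkeeping.

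Finally, $(d)\Leftrightarrow(e)$ is two applications of Proposition \ref{PROP: q-concaveCore}, with ambient spaces $L^1(m_T)$ and $L^p(m_T)$ respectively — legitimate since the measure $\eta$ equivalent to $\Vert m_T\Vert$ satisfies $\eta\ll\mu$ (as $\Vert m_T\Vert\ll\mu$) — plus the observation that $[i]$ maps $X(\mu)$ into $\frac qp L^1(m_T)\cap qL^p(m_T)$ (with its $\max$ norm) iff it maps into each factor. I expect the main obstacle to be Step 2 of $(a)\Rightarrow(c)$: the truncations must be arranged so that $\big(\sum_j|g_j^{(k)}|^{q/p}\big)^{p/q}$ is exactly $h_k$ and so that $\Vert g_j^{(k)}\Vert_{m_T}\to\Vert g_j\Vert_{m_T}$, which is precisely where $\sigma$-order continuity of $X(\mu)^{\frac1p}+X(\mu)$ and the completeness (genuinely Banach, not merely quasi-Banach) of $L^1(m_T)$ are used; the power manipulations in $(c)\Leftrightarrow(d)$ are routine once one keeps $\Vert g\Vert_{X(\mu)^{\frac1p}}=\Vert\,|g|^{1/p}\,\Vert_{X(\mu)}^{\,p}$ in mind.
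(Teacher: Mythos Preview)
Your proof is correct and uses the same key ingredients as the paper: the norm formula \eqref{EQ: L1m-intnorm} combined with the $\varphi_j$-trick, the $\sigma$-property approximation exploiting $\sigma$-order continuity, Lemma \ref{LEM: q-concaveOperatorOnX+Y}, Proposition \ref{PROP: q-concaveCore}, and Theorem \ref{THM: quasiBfsXsubset(LpCapL1)mT}. The only difference is organizational: the paper proves the chain $(a)\Rightarrow(b)\Rightarrow(c)$ and then $(c)\Rightarrow(a)$, first showing in $(a)\Rightarrow(b)$ that the continuous extension $S$ is $\tfrac{q}{p}$-concave via $\sigma$-approximation from $X(\mu)$, and then in $(b)\Rightarrow(c)$ transferring this to $[i]$ via the $\varphi_j$-trick; you instead merge these into a direct $(a)\Rightarrow(c)$ (your Step~1 is the $\varphi_j$-trick on $X(\mu)$, your Step~2 the $\sigma$-approximation up to $X(\mu)^{1/p}+X(\mu)$) and close the cycle with the trivial $(c)\Rightarrow(b)\Rightarrow(a)$. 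Both orderings work; yours is marginally more economical, while the paper's intermediate stop at $(b)$ isolates the statement that \emph{any} continuous extension of $T$ is automatically $\tfrac{q}{p}$-concave, which is of some independent interest.
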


\begin{proof}
First note that $\frac{q}{p}L^1(m_T)\cap qL^p(m_T)$ is
$\sigma$-order continuous as a consequence of Proposition \ref{PROP:
q-concaveImpliesSigmaoc}.

(a) $\Rightarrow$ (b) From Remark \ref{REM: (p,q)PowerConcave}.(iv)
we have that $T$ is $p$-th power factorable with a continuous
extension. Let $S\colon X(\mu)^{\frac{1}{p}}+X(\mu)\to E$ be a
continuous linear operator extending $T$. We are going to see that
$S$ is $\frac{q}{p}$-concave. Since $T$ is $(p,q)$-power-concave and
$S=T$ on $X(\mu)$, there exists $C>0$ such that
$$
\Big(\sum_{j=1}^n\Vert
S(f_j)\Vert_E^{\frac{q}{p}}\Big)^{\frac{p}{q}}\le
C\,\Big\Vert\Big(\sum_{j=1}^n|f_j|^{\frac{q}{p}}\Big)^{\frac{p}{q}}\Big\Vert_{X(\mu)^{\frac{1}{p}}+X(\mu)}
$$
for all finite subset $(f_j)_{j=1}^n\subset X(\mu)$. Consider
$(f_j)_{j=1}^n\subset X(\mu)^{\frac{1}{p}}+X(\mu)$ with $f_j\ge0$
$\mu$-a.e.\ for all $j$. The $\sigma$-property of $X(\mu)$ allows to
find for each $j=1,...,n$ a sequence $(h_k^j)\subset X(\mu)$ such
that $0\le h_k^j\uparrow f_j$ $\mu$-a.e.\ as $k\to\infty$ (see
\cite{delgado-sanchezperez} for the details). For every $k$, we have
that
\begin{eqnarray*}
\Big(\sum_{j=1}^n\Vert
S(h_k^j)\Vert_E^{\frac{q}{p}}\Big)^{\frac{p}{q}} & \le &
C\,\Big\Vert\Big(\sum_{j=1}^n|h_k^j|^{\frac{q}{p}}\Big)^{\frac{p}{q}}\Big\Vert_{X(\mu)^{\frac{1}{p}}+X(\mu)}
\\ & \le &
C\,\Big\Vert\Big(\sum_{j=1}^n|f_j|^{\frac{q}{p}}\Big)^{\frac{p}{q}}\Big\Vert_{X(\mu)^{\frac{1}{p}}+X(\mu)}.
\end{eqnarray*}
On other hand, since $X(\mu)^{\frac{1}{p}}+X(\mu)$ is $\sigma$-order
continuous, it follows that $h_k^j\to f_j$ in
$X(\mu)^{\frac{1}{p}}+X(\mu)$ as $k\to\infty$, and so $S(h_k^j)\to
S(f_j)$ in $E$ as $k\to\infty$. Hence, taking limit as $k\to\infty$
in the above inequality, it follows that
$$
\Big(\sum_{j=1}^n\Vert
S(f_j)\Vert_E^{\frac{q}{p}}\Big)^{\frac{p}{q}}\le
C\,\Big\Vert\Big(\sum_{j=1}^n|f_j|^{\frac{q}{p}}\Big)^{\frac{p}{q}}\Big\Vert_{X(\mu)^{\frac{1}{p}}+X(\mu)}.
$$
For a general $(f_j)_{j=1}^n\subset X(\mu)^{\frac{1}{p}}+X(\mu)$,
write $f_j=f_j^+-f_j^-$ where $f_j^+$ and $f_j^-$ are the positive
and negative parts respectively of each $f_j$. By using inequality
\eqref{EQ: t-inequality} and denoting
$\alpha_{p,q}=\max\{1,2^{1-\frac{p}{q}}\}$, we have that
\begin{eqnarray*}
\Big(\sum_{j=1}^n\Vert
S(f_j)\Vert_E^{\frac{q}{p}}\Big)^{\frac{p}{q}} & \le &
\Big(\sum_{j=1}^n\big(\Vert S(f_j^+)\Vert_E+\Vert
S(f_j^-)\Vert_E\big)^{\frac{q}{p}}\Big)^{\frac{p}{q}} \\ & \le &
\alpha_{p,q}\Big(\sum_{j=1}^n\Vert
S(f_j^+)\Vert_E^{\frac{q}{p}}+\sum_{j=1}^n\Vert S(f_j^-)\Vert_E^{\frac{q}{p}}\Big)^{\frac{p}{q}} \\
& \le &
\alpha_{p,q}\,C\,\Big\Vert\Big(\sum_{j=1}^n|f_j^+|^{\frac{q}{p}}
+\sum_{j=1}^n|f_j^-|^{\frac{q}{p}}\Big)^{\frac{p}{q}}\Big\Vert_{X(\mu)^{\frac{1}{p}}+X(\mu)}
\\ & = & \alpha_{p,q}\,C\,
\Big\Vert\Big(\sum_{j=1}^n|f_j|^{\frac{q}{p}}\Big)^{\frac{p}{q}}\Big\Vert_{X(\mu)^{\frac{1}{p}}+X(\mu)}
\end{eqnarray*}
(for the last equality note that
$|f_j|^{\frac{q}{p}}=|f_j^+|^{\frac{q}{p}} +|f_j^-|^{\frac{q}{p}}$
as $f_j^+$ and $f_j^-$ have disjoint support).

(b) $\Rightarrow$ (c) Since $T$ is $p$-th power factorable with a
$\frac{q}{p}$-concave (and so continuous) extension, from Theorem
\ref{THM: quasiBfsXsubset(LpCapL1)mT}, the map $[i]\colon
X(\mu)^{\frac{1}{p}}+X(\mu)\to L^1(m_T)$ is well defined. Let
$S\colon X(\mu)^{\frac{1}{p}}+X(\mu)\to E$ be a
$\frac{q}{p}$-concave linear operator extending $T$. Note that
$S=I_{m_T}\circ[i]$ (Theorem \ref{THM: quasiBfsXsubset(LpCapL1)mT}).
Denote by $C$ the $\frac{q}{p}$-concavity constant of $S$. Consider
$(f_j)_{j=1}^n\subset X(\mu)^{\frac{1}{p}}+X(\mu)$ and fix
$\varepsilon>0$. For each $j$, by \eqref{EQ: L1m-intnorm}, we can
take $\varphi_j\in\mathcal{S}\big(\Sigma_{X(\mu)}\big)$ such that
$|\varphi_j|\le1$ and
$$
\Vert
f_j\Vert_{L^1(m_T)}\le\Big(\frac{\varepsilon}{2^j}\Big)^{\frac{p}{q}}+\Vert
I_{m_T}(f_j\varphi_j)\Vert_E.
$$
Since $f_j\varphi_j\in  X(\mu)^{\frac{1}{p}}+X(\mu)$ as
$|f_j\varphi_j|\le|f_j|$, then
$I_{m_T}(f_j\varphi_j)=S(f_j\varphi_j)$. So, by using inequality
\eqref{EQ: t-inequality} and the $\frac{q}{p}$-concavity of $S$, we
have that
\begin{eqnarray*}
\sum_{j=1}^n\Vert f_j\Vert_{L^1(m_T)}^{\frac{q}{p}} & \le &
\sum_{j=1}^n\left(\Big(\frac{\varepsilon}{2^j}\Big)^{\frac{p}{q}}+\Vert
S(f_j\varphi_j)\Vert_E\right)^{\frac{q}{p}} \\
& \le &
\max\{1,2^{\frac{q}{p}-1}\}\left(\sum_{j=1}^n\frac{\varepsilon}{2^j}+
\sum_{j=1}^n\Vert S(f_j\varphi_j)\Vert_E^{\frac{q}{p}}\right) \\ &
\le & \max\{1,2^{\frac{q}{p}-1}\}
\left(\varepsilon+C^{\frac{q}{p}}\,\Big\Vert\Big(\sum_{j=1}^n
|f_j\varphi_j|^{\frac{q}{p}}\Big)^{\frac{p}{q}}\Big\Vert_{X(\mu)^{\frac{1}{p}}+X(\mu)}^{\frac{q}{p}}\right)
\\ &
\le & \max\{1,2^{\frac{q}{p}-1}\}
\left(\varepsilon+C^{\frac{q}{p}}\,\Big\Vert\Big(\sum_{j=1}^n
|f_j|^{\frac{q}{p}}\Big)^{\frac{p}{q}}\Big\Vert_{X(\mu)^{\frac{1}{p}}+X(\mu)}^{\frac{q}{p}}\right).
\end{eqnarray*}
Taking limit as $\varepsilon\to0$, we obtain
$$
\sum_{j=1}^n\Vert f_j\Vert_{L^1(m_T)}^{\frac{q}{p}}\le
C^{\frac{q}{p}}\,\max\{1,2^{\frac{q}{p}-1}\}
\Big\Vert\Big(\sum_{j=1}^n
|f_j|^{\frac{q}{p}}\Big)^{\frac{p}{q}}\Big\Vert_{X(\mu)^{\frac{1}{p}}+X(\mu)}^{\frac{q}{p}}
$$
and so
$$
\Big(\sum_{j=1}^n\Vert
f_j\Vert_{L^1(m_T)}^{\frac{q}{p}}\Big)^{\frac{p}{q}}\le
C\,\max\{1,2^{1-\frac{p}{q}}\} \Big\Vert\Big(\sum_{j=1}^n
|f_j|^{\frac{q}{p}}\Big)^{\frac{p}{q}}\Big\Vert_{X(\mu)^{\frac{1}{p}}+X(\mu)}.
$$
Hence, $[i]\colon X(\mu)^{\frac{1}{p}}+X(\mu)\to L^1(m_T)$ is
$\frac{q}{p}$-concave.

(c) $\Leftrightarrow$ (d) From Theorem \ref{THM:
quasiBfsXsubset(LpCapL1)mT}, we have that $[i]\colon
X(\mu)^{\frac{1}{p}}+X(\mu)\to L^1(m_T)$ is well defined if and only
if $[i]\colon X(\mu)\to L^p(m_T)\cap L^1(m_T)$ is well defined,
which is equivalent to $[i]\colon X(\mu)\to L^1(m_T)$ and $[i]\colon
X(\mu)\to L^p(m_T)$ well defined. By Lemma \ref{LEM:
q-concaveOperatorOnX+Y} we have that $[i]\colon
X(\mu)^{\frac{1}{p}}+X(\mu)\to L^1(m_T)$ is $\frac{q}{p}$-concave if
and only if $[i]\colon X(\mu)^{\frac{1}{p}}\to L^1(m_T)$ and
$[i]\colon X(\mu)\to L^1(m_T)$ are $\frac{q}{p}$-concave. On other
hand, it is straightforward to verify that $[i]\colon
X(\mu)^{\frac{1}{p}}\to L^1(m_T)$ is $\frac{q}{p}$-concave if and
only if $[i]\colon X(\mu)\to L^p(m_T)$ is $q$-concave.

(d) $\Leftrightarrow$ (e) follows from Proposition \ref{PROP:
q-concaveCore}.

(c) $\Rightarrow$ (a) Denote by $C$ the $\frac{q}{p}$-concavity
constant of $[i]\colon X(\mu)^{\frac{1}{p}}+X(\mu)\to L^1(m_T)$.
Consider $(f_j)_{j=1}^n\subset X(\mu)$ and note that $f_j\in
L^1(m_T)$ with $I_{m_T}(f_j)=T(f_j)$ for all $j$. Then,
\begin{eqnarray*}
\Big(\sum_{j=1}^n\Vert
T(f_j)\Vert_E^{\frac{q}{p}}\Big)^{\frac{p}{q}} & = &
\Big(\sum_{j=1}^n\Vert
I_{m_T}(f_j)\Vert_E^{\frac{q}{p}}\Big)^{\frac{p}{q}} \\ & \le &
\Big(\sum_{j=1}^n\Vert
f_j\Vert_{L^1(m_T)}^{\frac{q}{p}}\Big)^{\frac{p}{q}} \\ & \le &
C\,\Big\Vert\Big(\sum_{j=1}^n|f_j|^{\frac{q}{p}}\Big)^{\frac{p}{q}}\Big\Vert_{X(\mu)^{\frac{1}{p}}+X(\mu)}.
\end{eqnarray*}
\end{proof}

Note that $qL^p(m_T)=\big(\frac{q}{p}L^1(m_T)\big)^p$ (see
Proposition \ref{PROP: pPower-qX(mu)}). In particular, in the case
when $T$ is $(p,q)$-power-concave and $\chi_\Omega\in X(\mu)$ (so
$\chi_\Omega\in \frac{q}{p}L^1(m_T)$), from Remark \ref{REM:
XpResults}.(b) it follows that $\frac{q}{p}L^1(m_T)\cap
qL^p(m_T)=qL^p(m_T)$ if $p\ge1$ and $\frac{q}{p}L^1(m_T)\cap
qL^p(m_T)=\frac{q}{p}L^1(m_T)$ if $p<1$.

\begin{theorem}\label{THM: (p,q)PowerConcave-Factorization}
Suppose that $T$ is $(p,q)$-power-concave. Then, $T$ factors as
\begin{equation}\label{EQ: (p,q)PowerConcave-Factorization1}
\begin{split}
\xymatrix{
X(\mu) \ar[rr]^{T} \ar@{.>}[dr]_(.4){[i]} & & E\\
& \frac{q}{p}L^1(m_T)\cap qL^p(m_T) \ar@{.>}[ur]_(.6){I_{m_T}}}
\end{split}
\end{equation}
with $I_{m_T}$ being $(p,q)$-power-concave. Moreover, the
factorization is \emph{optimal} in the sense:
$$
\left.\begin{minipage}{6.2cm} \textnormal{\it If $Z(\xi)$ is a
$\sigma$-order continuous quasi-B.f.s.\ such that $\xi\ll\mu$ and}
\leqnomode
\begin{equation}\label{EQ: (p,q)PowerConcave-Factorization2}
\begin{split}
\xymatrix{
X(\mu) \ar[rr]^{T} \ar@{.>}[dr]_(.45){[i]} & & E\\
& Z(\xi) \ar@{.>}[ur]_(.5){S}}
\end{split}
\end{equation}
\textnormal{\it with $S$ being a $(p,q)$-power-concave linear
operator}
\end{minipage}\ \right\} \ \Longrightarrow \ \ \
\begin{minipage}{6cm}
\textnormal{\it $[i]\colon Z(\xi)\to \frac{q}{p}L^1(m_T)\cap
qL^p(m_T)$}
\\
\textnormal{\it is well defined and $S=I_{m_T}\circ[i]$.}
\end{minipage}
$$
\end{theorem}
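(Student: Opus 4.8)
The plan is to deduce both halves of the statement from Theorem~\ref{THM: Xsubset(q/pL1CapqLp)mT} (mainly the equivalence (a)$\Leftrightarrow$(e)) together with the optimality of $L^1(m_T)$ for the preservation of continuity, recalled just before Theorem~\ref{THM: quasiBfsXsubset(LpCapL1)mT}. Write $Y:=\frac{q}{p}L^1(m_T)\cap qL^p(m_T)$. By the opening line of the proof of Theorem~\ref{THM: Xsubset(q/pL1CapqLp)mT}, $Y$ is a $\sigma$-order continuous quasi-B.f.s.; moreover $X(\mu)\subset Y$ by (a)$\Rightarrow$(e) of that theorem, and since $X(\mu)$ contains some $g>0$ $\mu$-a.e.\ which remains strictly positive $\Vert m_T\Vert$-a.e.\ (as $\Vert m_T\Vert\ll\mu$), the space $Y$ also has the $\sigma$-property.

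For the factorization \eqref{EQ: (p,q)PowerConcave-Factorization1} I would argue as follows. Theorem~\ref{THM: Xsubset(q/pL1CapqLp)mT} ((a)$\Rightarrow$(e)) gives $[i]\colon X(\mu)\to Y$ well defined, and its ``moreover'' part, via Theorem~\ref{THM: quasiBfsXsubset(LpCapL1)mT}, identifies the extension of $T$ to $X(\mu)^{1/p}+X(\mu)$ with $I_{m_T}\circ[i]$; restricting to $X(\mu)$ makes the triangle commute. To obtain that $I_{m_T}\colon Y\to E$ is $(p,q)$-power-concave I would apply Theorem~\ref{THM: Xsubset(q/pL1CapqLp)mT} once more, this time to the continuous operator $I_{m_T}\colon Y\to E$, whose associated vector measure is $A\mapsto I_{m_T}(\chi_A)$ on $\Sigma_Y$. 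Once one knows (see the last paragraph) that this measure generates the same $L^1$ and $L^p$ spaces as $m_T$, so that the corresponding intersection of $\frac{q}{p}$- and $q$-concave cores is again $Y$, the implication (e)$\Rightarrow$(a) of Theorem~\ref{THM: Xsubset(q/pL1CapqLp)mT} reduces the $(p,q)$-power-concavity of $I_{m_T}\colon Y\to E$ to the triviality that $[i]\colon Y\to Y$ is well defined.

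For the optimality, given $Z(\xi)$ and $S$ as in \eqref{EQ: (p,q)PowerConcave-Factorization2}, I would first note that $S$, being $(p,q)$-power-concave, is continuous, so the optimality of $L^1(m_T)$ for continuity immediately yields that $[i]\colon Z(\xi)\to L^1(m_T)$ is well defined and $S=I_{m_T}\circ[i]$ --- the second half of the conclusion. For the first half, since $X(\mu)\subset Z(\xi)$ (implicit in \eqref{EQ: (p,q)PowerConcave-Factorization2}) and $\xi\ll\mu$, the space $Z(\xi)$ contains a strictly positive function and hence has the $\sigma$-property, so Theorem~\ref{THM: Xsubset(q/pL1CapqLp)mT} ((a)$\Rightarrow$(e)) applies to $S\colon Z(\xi)\to E$ and gives $[i]\colon Z(\xi)\to\frac{q}{p}L^1(m_S)\cap qL^p(m_S)$ well defined, with $m_S(A)=S(\chi_A)$. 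Because $L^p(m_S)$ is by definition the $p$-th power of $L^1(m_S)$ and the concave cores depend only on the underlying space, it then suffices to check $L^1(m_S)=L^1(m_T)$.

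The one step that is not bookkeeping is this identity $L^1(m_S)=L^1(m_T)$ (and, for the factorization, its counterpart for the measure attached to $I_{m_T}\colon Y\to E$), and I expect it to be the main obstacle. I would derive it from the two-sided optimality of the continuity-preserving domain: on the one hand $S$ factors $T$ through $Z(\xi)$ and hence through $L^1(m_S)$ --- whose underlying measure is dominated by $\xi$, since $\xi(A)=0$ forces $\chi_A=0$ in $Z(\xi)$ and so $m_S(A)=0$, and therefore dominated by $\mu$ as well --- so the optimality of $L^1(m_T)$ gives $L^1(m_S)\subset L^1(m_T)$; on the other hand $T$ already factors as $I_{m_T}\circ[i]$ through $L^1(m_T)$, hence so does $S$, and since $m_T$ is the restriction of $m_S$ to $\Sigma_{X(\mu)}$ its semivariation --- and thus the measure underlying $L^1(m_T)$ --- is dominated by that of $m_S$ and in particular by $\xi$, so the optimality of $L^1(m_S)$ for $S$ gives $L^1(m_T)\subset L^1(m_S)$. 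Hence $L^1(m_S)=L^1(m_T)$ as sets, which is exactly what ``well defined'' requires. The same two-step comparison, with $I_{m_T}\colon Y\to E$ and its associated vector measure in place of $S$ and $m_S$, completes the factorization part.
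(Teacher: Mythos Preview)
Your proposal is correct and follows essentially the same route as the paper: both parts are deduced from Theorem~\ref{THM: Xsubset(q/pL1CapqLp)mT} together with the identification $L^1(m_S)=L^1(m_T)$ (and its counterpart $L^1(m_{I_{m_T}})=L^1(m_T)$). The only difference is that the paper obtains these identities by invoking \cite[Lemma~3]{calabuig-delgado-sanchezperez} directly (once $m_T$ is recognized as the restriction of $m_S$, respectively of $m_{I_{m_T}}$, to $\Sigma_{X(\mu)}$), whereas you rederive them via a two-sided application of the continuity-optimality of $L^1$ of a vector measure; this is in effect the content of that lemma, so your argument is a self-contained reproof of it rather than a genuinely different approach.
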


\begin{proof}
The factorization \eqref{EQ: (p,q)PowerConcave-Factorization1}
follows from Theorem \ref{THM: Xsubset(q/pL1CapqLp)mT}. The space
$\frac{q}{p}L^1(m_T)\cap qL^p(m_T)$ is $\sigma$-order continuous as
noted before and satisfies the $\sigma$-property as $X(\mu)$ does.
Since $ {\textstyle I_{m_T}\colon \frac{q}{p}L^1(m_T)\cap
qL^p(m_T)\to E} $ is continuous (as $I_{m_T}\colon L^1(m_T)\to E$ is
so), we can apply Theorem \ref{THM: Xsubset(q/pL1CapqLp)mT} to see
that it is $(p,q)$-power-concave. Note that
$\Sigma_{X(\mu)}\subset\Sigma_{\frac{q}{p}L^1(m_T)\cap qL^p(m_T)}$
and $m_{I_{m_T}}(A)=I_{m_T}(\chi_A)=T(\chi_A)=m_T(A)$ for all
$A\in\Sigma_{X(\mu)}$. That is, $m_T$ is the restriction of
$m_{I_{m_T}}\colon\Sigma_{\frac{q}{p}L^1(m_T)\cap qL^p(m_T)}\to E$
to $\Sigma_{X(\mu)}$. From \cite[Lemma
3]{calabuig-delgado-sanchezperez}, it follows that
$L^1(m_{I_{m_T}})=L^1(m_T)$. Then,
$$
{\textstyle
[i]\colon \frac{q}{p}L^1(m_T)\cap qL^p(m_T)\to
\frac{q}{p}L^1(m_{I_{m_T}})\cap qL^p(m_{I_{m_T}})}
$$
is well defined as $\frac{q}{p}L^1(m_{I_{m_T}})\cap
qL^p(m_{I_{m_T}})=\frac{q}{p}L^1(m_T)\cap
qL^p(m_T)$.

Let $Z(\xi)$ satisfy \eqref{EQ: (p,q)PowerConcave-Factorization2}.
In particular, $Z(\xi)$ has the $\sigma$-property. From Theorem
\ref{THM: Xsubset(q/pL1CapqLp)mT} applied to the operator $S\colon
Z(\xi)\to E$, we have that $[i]\colon Z(\xi)\to
\frac{q}{p}L^1(m_S)\cap qL^p(m_S)$ is well defined and
$S=I_{m_S}\circ[i]$. Since $\Sigma_{X(\mu)}\subset\Sigma_{Z(\xi)}$
and $m_S(A)=S(\chi_A)=T(\chi_A)=m_T(A)$ for all
$A\in\Sigma_{X(\mu)}$ (i.e.\ $m_T$ is the restriction of
$m_S\colon\Sigma_{Z(\xi)}\to E$ to $\Sigma_{X(\mu)}$), from
\cite[Lemma 3]{calabuig-delgado-sanchezperez}, it follows that
$L^1(m_S)=L^1(m_T)$ and $I_{m_S}=I_{m_T}$. Therefore,
$$
{\textstyle[i]\colon Z(\xi)\to \frac{q}{p}L^1(m_S)\cap
qL^p(m_S)=\frac{q}{p}L^1(m_T)\cap qL^p(m_T)}
$$
is well defined and
$S=I_{m_S}\circ[i]=I_{m_T}\circ[i]$.
\end{proof}

We can rewrite Theorem \ref{THM: (p,q)PowerConcave-Factorization} in
terms of optimal domains.

\begin{corollary}
Suppose that $T$ is $(p,q)$-power-concave. Then
$\frac{q}{p}L^1(m_T)\cap qL^p(m_T)$ is the largest $\sigma$-order
continuous quasi-B.f.s.\ to which $T$ can be extended as a
$(p,q)$-power-concave operator still with values in $E$. Moreover,
the extension of $T$ to the space $\frac{q}{p}L^1(m_T)\cap
qL^p(m_T)$ is given by the integration operator $I_{m_T}$.
\end{corollary}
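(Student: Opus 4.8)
The plan is to deduce the statement almost verbatim from Theorem~\ref{THM: (p,q)PowerConcave-Factorization}, once we fix the meaning of ``a $\sigma$-order continuous quasi-B.f.s.\ to which $T$ can be extended as a $(p,q)$-power-concave operator still with values in $E$'': this is a space $Z(\xi)$ with $\xi\ll\mu$ for which the inclusion map $[i]\colon X(\mu)\to Z(\xi)$ is well defined and there exists a $(p,q)$-power-concave operator $S\colon Z(\xi)\to E$ with $S\circ[i]=T$, i.e.\ a space fitting into the diagram \eqref{EQ: (p,q)PowerConcave-Factorization2}.

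First I would check that $\frac{q}{p}L^1(m_T)\cap qL^p(m_T)$ is itself an admissible domain. By Theorem~\ref{THM: (p,q)PowerConcave-Factorization} the inclusion $[i]\colon X(\mu)\to \frac{q}{p}L^1(m_T)\cap qL^p(m_T)$ is well defined, the space is a $\sigma$-order continuous quasi-B.f.s.\ (Proposition~\ref{PROP: q-concaveImpliesSigmaoc}), and the factorization \eqref{EQ: (p,q)PowerConcave-Factorization1} gives $T=I_{m_T}\circ[i]$ with $I_{m_T}$ being $(p,q)$-power-concave. Hence the extension exists and equals $I_{m_T}$, which already settles the ``moreover'' clause.

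For maximality, given any admissible $Z(\xi)$ as above, I would invoke the optimality part of Theorem~\ref{THM: (p,q)PowerConcave-Factorization}: it yields that $[i]\colon Z(\xi)\to \frac{q}{p}L^1(m_T)\cap qL^p(m_T)$ is well defined and $S=I_{m_T}\circ[i]$; that is, $Z(\xi)$ embeds into $\frac{q}{p}L^1(m_T)\cap qL^p(m_T)$ and its extension $S$ is just the restriction of $I_{m_T}$. This is exactly the assertion that $\frac{q}{p}L^1(m_T)\cap qL^p(m_T)$ is the largest such space. The only point requiring care — more bookkeeping than a genuine obstacle — is the translation between the informal phrase ``largest space to which $T$ extends'' and the precise factorization hypothesis \eqref{EQ: (p,q)PowerConcave-Factorization2}: one must insist that the extension is compatible with the identity map $[i]$, so that the values in $E$ and the action of $T$ are genuinely preserved, and that $\xi\ll\mu$ so that $[i]$ is meaningful. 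Once this dictionary is in place, nothing further needs to be proved.
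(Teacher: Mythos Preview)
Your proposal is correct and matches the paper's approach: the paper presents this corollary as a direct restatement of Theorem~\ref{THM: (p,q)PowerConcave-Factorization} in the language of optimal domains, giving no separate proof, and your argument spells out precisely that translation.
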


Recalling that the $(1,q)$-power-concave operators coincide with the
$q$-concave operators, we obtain our main result.

\begin{corollary} \label{COR: qConcaveOptimalDomain}
Suppose that $T$ is $q$-concave. Then $qL^1(m_T)$ is the largest
$\sigma$-order continuous quasi-B.f.s.\ to which $T$ can be extended
as a $q$-concave operator still with values in $E$. Moreover, the
extension of $T$ to $qL^1(m_T)$ is given by the integration operator
$I_{m_T}$.
\end{corollary}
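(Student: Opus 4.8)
The plan is to obtain this statement as the special case $p=1$ of Theorem \ref{THM: (p,q)PowerConcave-Factorization}. The first step is to match hypotheses: by Remark \ref{REM: (p,q)PowerConcave}.(i), an operator is $(1,q)$-power-concave if and only if it is $q$-concave, so the assumption that $T$ is $q$-concave says exactly that $T$ is $(1,q)$-power-concave. In particular the auxiliary notion of $p$-th power factorability is vacuous here, since $X(\mu)^{\frac{1}{p}}+X(\mu)=X(\mu)^1+X(\mu)=X(\mu)$ when $p=1$ (with the same quasi-norm, up to the triangle-inequality constant), so Theorem \ref{THM: quasiBfsXsubset(LpCapL1)mT} contributes nothing beyond the standing hypothesis that $T$ is continuous.

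The second step is to read off the conclusion of Theorem \ref{THM: (p,q)PowerConcave-Factorization} at $p=1$. The optimal domain appearing there is $\frac{q}{p}L^1(m_T)\cap qL^p(m_T)$; since $L^1(m_T)$ is its own $1$-power we have $L^p(m_T)=L^1(m_T)$ when $p=1$, hence $\frac{q}{p}L^1(m_T)\cap qL^p(m_T)=qL^1(m_T)\cap qL^1(m_T)=qL^1(m_T)$. Thus the factorization \eqref{EQ: (p,q)PowerConcave-Factorization1} becomes $T=I_{m_T}\circ[i]$ with $[i]\colon X(\mu)\to qL^1(m_T)$ and $I_{m_T}\colon qL^1(m_T)\to E$ being $(1,q)$-power-concave, that is, $q$-concave; and the optimality clause of the theorem says precisely that any $\sigma$-order continuous quasi-B.f.s.\ $Z(\xi)$ with $\xi\ll\mu$ through which $T$ factors via a $q$-concave operator $S$ embeds into $qL^1(m_T)$ with $S=I_{m_T}\circ[i]$. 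This is exactly the asserted maximality of $qL^1(m_T)$, together with the identification of the maximal extension as the integration operator.

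I do not expect a genuine obstacle here: the argument is a pure specialization, so the only things to check are the bookkeeping in the identifications $X(\mu)^1+X(\mu)=X(\mu)$ and $qL^1(m_T)\cap qL^1(m_T)=qL^1(m_T)$, and the verification that $qL^1(m_T)$ qualifies as a target space in the sense required by the theorem — it is a $\sigma$-order continuous quasi-B.f.s.\ (being $q$-concave, by Proposition \ref{PROP: q-concaveImpliesSigmaoc}) and it inherits the $\sigma$-property from $X(\mu)$. All of these are already contained in the quoted results, so the proof amounts to invoking Theorem \ref{THM: (p,q)PowerConcave-Factorization} with $p=1$.
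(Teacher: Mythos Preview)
Your proposal is correct and follows exactly the paper's approach: the paper derives this corollary in a single sentence by recalling that $(1,q)$-power-concave operators are precisely the $q$-concave ones, so the result is the specialization $p=1$ of Theorem~\ref{THM: (p,q)PowerConcave-Factorization} (and its corollary). Your additional bookkeeping about $X(\mu)^1+X(\mu)=X(\mu)$ and $\frac{q}{p}L^1(m_T)\cap qL^p(m_T)=qL^1(m_T)$ is accurate and just makes explicit what the paper leaves implicit.
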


Let us give now a direct application related to the Maurey-Rosenthal
factorization of $q$-concave operators defined on a $q$-convex
quasi-B.f.s. In the case when $T$ is $q$-concave, by Corollary
\ref{COR: qConcaveOptimalDomain}, the integration operator $I_{m_T}$
extends $T$ to the space $qL^1(m_T)$. Note that the map $[i]\colon
X(\mu)\to qL^1(m_T)$ is $q$-concave as it is continuous and
$qL^1(m_T)$ is $q$-concave. From a variant of the Maurey-Rosenthal
theorem proved in \cite[Corollary 5]{defant}, under some extra
conditions, if $X(\mu)$ is $q$-convex then $[i]\colon X(\mu)\to
qL^1(m_T)$ factors through the space $L^q(\mu)$. So, we obtain the
following improvement of the usual factorization of $q$-concave
operators on $q$-convex quasi-B.f.s.'.

\begin{corollary}\label{COR: MaureyRosenthalFactorization}
Let $1\le q<\infty$. Assume that $\mu$ is $\sigma$-finite and that
$X(\mu)$ is $q$-convex and has the $\sigma$-Fatou property. If $T$
is $q$-concave then it can be factored as
$$
\xymatrix{
X(\mu) \ar[rr]^{T} \ar@{.>}[d]_{M_g} & &   E\\
L^q(\mu)  \ar@{.>}[rr]^{M_{g^{-1}}}&  & qL^1(m_T)
\ar@{.>}[u]_{I_{m_T}}}
$$
for positive multiplication operators $M_g$ and $M_{g^{-1}}$. The
converse is also true.
\end{corollary}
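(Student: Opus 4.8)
The plan is to combine Corollary \ref{COR: qConcaveOptimalDomain} with the lattice form of the Maurey--Rosenthal factorization theorem.

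Assume first that $T$ is $q$-concave. By Corollary \ref{COR: qConcaveOptimalDomain}, $T=I_{m_T}\circ[i]$ with $[i]\colon X(\mu)\to qL^1(m_T)$ well defined; being positive, $[i]$ is continuous, and since $qL^1(m_T)$ is $q$-concave the operator $[i]\colon X(\mu)\to qL^1(m_T)$ is itself $q$-concave (its constant being at most $\Vert[i]\Vert$ times the $q$-concavity constant of $qL^1(m_T)$). I would then apply the variant of the Maurey--Rosenthal theorem in \cite[Corollary 5]{defant} to this $q$-concave operator. Its hypotheses coincide with those of the corollary ($\mu$ $\sigma$-finite, $X(\mu)$ $q$-convex with the $\sigma$-Fatou property), so it yields a measurable weight $g$ with $0<g<\infty$ $\mu$-a.e.\ such that $M_g\colon X(\mu)\to L^q(\mu)$ and $M_{g^{-1}}\colon L^q(\mu)\to qL^1(m_T)$ are bounded and $[i]=M_{g^{-1}}\circ M_g$ (the last identity being clear pointwise, since $g^{-1}(gf)=f$). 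Composing on the left with $I_{m_T}$ gives $T=I_{m_T}\circ M_{g^{-1}}\circ M_g$, which is precisely the diagram in the statement.

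For the converse, suppose $T=I_{m_T}\circ M_{g^{-1}}\circ M_g$ for bounded positive multiplication operators $M_g\colon X(\mu)\to L^q(\mu)$ and $M_{g^{-1}}\colon L^q(\mu)\to qL^1(m_T)$. The key observation is that $M_g$ is automatically $q$-concave with constant $\Vert M_g\Vert$: for $(f_j)_{j=1}^n\subset X(\mu)$, using $\sum_j|gf_j|^q=g^q\sum_j|f_j|^q$ and the expression for the $L^q(\mu)$-norm,
$$
\Big(\sum_{j=1}^n\Vert M_gf_j\Vert_{L^q(\mu)}^q\Big)^{1/q}=\Big\Vert M_g\Big(\Big(\sum_{j=1}^n|f_j|^q\Big)^{1/q}\Big)\Big\Vert_{L^q(\mu)}\le\Vert M_g\Vert\,\Big\Vert\Big(\sum_{j=1}^n|f_j|^q\Big)^{1/q}\Big\Vert_{X(\mu)}.
$$
Since $q$-concavity is preserved by composition on the left with a bounded operator, and $R:=I_{m_T}\circ M_{g^{-1}}\colon L^q(\mu)\to E$ is bounded, it follows that $T=R\circ M_g$ is $q$-concave.

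The one point that needs genuine care is the invocation of \cite[Corollary 5]{defant}: one must check that its hypotheses are met by our $X(\mu)$ and $\mu$, and --- more delicately --- that the weight it produces can be taken finite and strictly positive $\mu$-a.e., so that $M_g$ and $M_{g^{-1}}$ are mutually inverse multiplication operators whose composition really is $[i]$. Everything beyond that is the routine composition bookkeeping above.
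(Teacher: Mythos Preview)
Your proposal is correct and follows essentially the same route as the paper: the paragraph immediately preceding the corollary already sketches the forward direction exactly as you do (apply Corollary~\ref{COR: qConcaveOptimalDomain}, note that $[i]\colon X(\mu)\to qL^1(m_T)$ is $q$-concave because it is continuous into a $q$-concave space, then invoke \cite[Corollary~5]{defant}), and the paper gives no separate proof beyond that. Your explicit treatment of the converse and the caveat about the weight $g$ being $\mu$-a.e.\ finite and strictly positive are reasonable elaborations of points the paper leaves implicit.
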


%%% ----------------------------------------------------------------------

\section{Vector measure representation of $q$-concave Banach lattices}
\label{SEC: qConcaveBanachLattices}

In this last section we look for a characterization of the class of
Banach lattices which are $p$-convex and $q$-concave in terms of
spaces of integrable functions with respect to a vector measure. For
$1<p$, it is known that order continuous $p$-convex Banach lattices
can be order isometrically represented as spaces $L^p$ of a vector
measure defined on a $\delta$-ring (see \cite[Theorem
10]{calabuig-juan-sanchezperez}). We will see that the addition of
the $q$-concavity property to the represented Banach lattice
translates to adding some concavity property to the corresponding
integration map.

First let us show two results concerning concavity for the
integration operator of a vector measure which will be needed later.

Let $m\colon\mathcal{R}\to E$ be a vector measure defined on a
$\delta$--ring $\mathcal{R}$ of subsets of $\Omega$ and with values
in a Banach space $E$.

\begin{proposition}\label{PROP: Im-qConcave}
The integration operator $I_m\colon L^1(m)\to E$ is $q$-concave if
and only if $L^1(m)$ is $q$-concave.
\end{proposition}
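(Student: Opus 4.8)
The plan is to prove the two implications separately, the substantive one being that $q$-concavity of $I_m$ forces $q$-concavity of $L^1(m)$.

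For the implication ``$L^1(m)$ $q$-concave $\Rightarrow$ $I_m$ $q$-concave'' I would just use the contractivity of the integration operator, $\Vert I_m(f)\Vert_E\le\Vert f\Vert_m$ for $f\in L^1(m)$. Given a finite family $(f_j)_{j=1}^n\subset L^1(m)$, this yields $\sum_j\Vert I_m(f_j)\Vert_E^q\le\sum_j\Vert f_j\Vert_m^q$, and applying the $q$-concavity inequality of the identity map on $L^1(m)$ (with constant $C$) to the right-hand side gives $\big(\sum_j\Vert I_m(f_j)\Vert_E^q\big)^{1/q}\le C\,\big\Vert\big(\sum_j|f_j|^q\big)^{1/q}\big\Vert_m$, which is exactly $q$-concavity of $I_m$.

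For the converse, assume $I_m$ is $q$-concave with constant $C$, fix a finite family $(f_j)_{j=1}^n\subset L^1(m)$ and $\varepsilon>0$. Using the formula \eqref{EQ: L1m-intnorm} for $\Vert\cdot\Vert_m$, I would choose, for each $j$, a function $\varphi_j\in\mathcal{S}(\mathcal{R})$ with $|\varphi_j|\le1$ and $\Vert f_j\Vert_m\le(\varepsilon\,2^{-j})^{1/q}+\Vert I_m(f_j\varphi_j)\Vert_E$; note that $f_j\varphi_j\in L^1(m)$ since $|f_j\varphi_j|\le|f_j|$, and that consequently $\big(\sum_j|f_j\varphi_j|^q\big)^{1/q}\le\big(\sum_j|f_j|^q\big)^{1/q}$ pointwise. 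Then, raising to the $q$-th power, invoking \eqref{EQ: t-inequality} with $t=q$ to split $(a+b)^q$, and afterwards applying the $q$-concavity of $I_m$ together with the lattice property of $\Vert\cdot\Vert_m$, I would arrive at
\begin{equation*}
\sum_{j=1}^n\Vert f_j\Vert_m^q\le\max\{1,2^{q-1}\}\Big(\varepsilon+C^q\,\Big\Vert\Big(\sum_{j=1}^n|f_j|^q\Big)^{1/q}\Big\Vert_m^q\Big).
\end{equation*}
Letting $\varepsilon\to0$ and taking $q$-th roots gives $q$-concavity of the identity on $L^1(m)$ with constant $\max\{1,2^{q-1}\}^{1/q}C=\max\{1,2^{1-1/q}\}\,C$.

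I do not anticipate a genuine obstacle: the converse is a direct adaptation of the argument proving the implication (b)$\Rightarrow$(c) of Theorem \ref{THM: Xsubset(q/pL1CapqLp)mT}. The only points requiring attention are that one must first replace the norm $\Vert\cdot\Vert_m$ on $L^1(m)$ by the scalars $\Vert I_m(f_j\varphi_j)\Vert_E$ via \eqref{EQ: L1m-intnorm} before the $q$-concavity hypothesis on $I_m$ can be used, and the routine bookkeeping of the factor $\max\{1,2^{q-1}\}$ coming from \eqref{EQ: t-inequality} when $q\neq1$; no $\sigma$-order continuity considerations are needed since $L^1(m)$ is $\sigma$-order continuous in any case.
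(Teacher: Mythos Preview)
Your proof is correct and follows the same core idea as the paper: both directions rely on the formula \eqref{EQ: L1m-intnorm} expressing $\Vert f\Vert_m$ as a supremum of $\Vert I_m(f\varphi)\Vert_E$ over $\varphi\in\mathcal{S}(\mathcal{R})$ with $|\varphi|\le1$, together with the contractivity of $I_m$.

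The only difference is in the execution of the substantive implication. You introduce an $\varepsilon$-argument (mirroring (b)$\Rightarrow$(c) of Theorem~\ref{THM: Xsubset(q/pL1CapqLp)mT}), which costs you the factor $\max\{1,2^{1-1/q}\}$. The paper proceeds more directly: applying the $q$-concavity of $I_m$ to the family $(f_j\varphi_j)_{j=1}^n$ gives
\[
\sum_{j=1}^n\Vert I_m(f_j\varphi_j)\Vert_E^q\le C^q\,\Big\Vert\Big(\sum_{j=1}^n|f_j\varphi_j|^q\Big)^{1/q}\Big\Vert_m^q\le C^q\,\Big\Vert\Big(\sum_{j=1}^n|f_j|^q\Big)^{1/q}\Big\Vert_m^q,
\]
and since the right-hand side is independent of the $\varphi_j$'s, one can take the supremum over each $\varphi_j$ separately on the left to obtain $\sum_j\Vert f_j\Vert_m^q$ with the \emph{same} constant $C$. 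This avoids both the $\varepsilon$ and the appeal to \eqref{EQ: t-inequality}.
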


\begin{proof}
Suppose that $I_m\colon L^1(m)\to E$ is $q$-concave and denote by
$C$ its $q$-concavity constant. Take $(f_j)_{j=1}^n\subset L^1(m)$
and $(\varphi_j)_{j=1}^n\subset \mathcal{S}(\mathcal{R})$ with
$|\varphi_j|\le1$ for all $j$. Since $(f_j\varphi_j)_{j=1}^n\subset
L^1(m)$, as $|f_j\varphi_j|\le|f_j|$ for all $j$, we have that
$$
\Big(\sum_{j=1}^n\Vert
I_m(f_j\varphi_j)\Vert_E^q\Big)^{\frac{1}{q}}\le
C\,\Big\Vert\Big(\sum_{j=1}^n|f_j\varphi_j|^q\Big)^{\frac{1}{q}}\Big\Vert_{L^1(m)}\le
C\,\Big\Vert\Big(\sum_{j=1}^n|f_j|^q\Big)^{\frac{1}{q}}\Big\Vert_{L^1(m)}.
$$
Taking supremum for each $j=1,..,n$ over all $\varphi_j\in
\mathcal{S}(\mathcal{R})$ with $|\varphi_j|\le1$, from \eqref{EQ:
L1m-intnorm}, if follows that
$$
\Big(\sum_{j=1}^n\Vert f_j\Vert_{L^1(m)}^q\Big)^{\frac{1}{q}}\le
C\,\Big\Vert\Big(\sum_{j=1}^n|f_j|^q\Big)^{\frac{1}{q}}\Big\Vert_{L^1(m)}.
$$
The converse is obvious as $I_m$ is continuous.
\end{proof}

Direct useful consequences can be deduced of the fact that the
integration map $I_m:L^1(m) \to E$ is $q$-concave. Assume that $m$
is defined on a $\sigma$-algebra and note that $q$-concavity for
$q\ge1$ always implies $(q,1)$-concavity (see the definition for
instance in \cite[p.\,61]{okada-ricker-sanchezperez2}). Thus, by
\cite[Proposition 7.9]{okada-ricker-sanchezperez2}, if $I_m$ is
$q$-concave for $q\ge1$ then it is \emph{weakly completely
continuous} (i.e.\ it maps weak Cauchy sequences into weakly
convergent sequences). Moreover, this implies that $L^1(m)$
coincides with the space $L_w^1(m)$ and so it has the $\sigma$-Fatou
property.

In the case when $\chi_\Omega\in L^1(m)$ (for instance if $m$ is
defined on a $\sigma$-algebra), we obtain a further result regarding
$(p,q)$-power-concave operators.

\begin{proposition}\label{PROP: Lp(m)-qConcave}
Suppose that $\chi_\Omega\in L^1(m)$ and $p\ge1$. The integration
operator $I_m\colon L^p(m)\to E$ is $(p,q)$-power-concave if and
only if $L^p(m)$ is $q$-concave.
\end{proposition}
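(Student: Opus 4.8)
The plan is to apply Theorem~\ref{THM: Xsubset(q/pL1CapqLp)mT} with the role of the base space $X(\mu)$ played by $L^p(m)$ itself and the role of the operator $T$ played by the integration map $I_m\colon L^p(m)\to E$. First I would check the set-up is legitimate: since $\chi_\Omega\in L^1(m)$ we have $\chi_\Omega\in L^p(m)$ (because $|\chi_\Omega|^p=\chi_\Omega$), so $L^p(m)$ is a $\sigma$-order continuous B.f.s.\ (recall $p\ge1$) enjoying the $\sigma$-property, and $I_m$ is continuous on it. Next I would identify the vector measure $m_{I_m}$ attached to this situation: every $A\in\mathcal R^{loc}$ has $\chi_A\in L^p(m)$ (that space being an ideal containing $\chi_\Omega$), so $\Sigma_{L^p(m)}=\mathcal R^{loc}$ and $m_{I_m}(A)=I_m(\chi_A)=m_{\chi_\Omega}(A)$, where $m_{\chi_\Omega}$ is the vector measure from the Preliminaries; hence $L^1(m_{I_m})=L^1(m_{\chi_\Omega})=L^1(m)$ isometrically, and therefore $L^p(m_{I_m})=L^p(m)$. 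With these identifications the equivalence (a)$\Leftrightarrow$(d) of Theorem~\ref{THM: Xsubset(q/pL1CapqLp)mT} reads: $I_m\colon L^p(m)\to E$ is $(p,q)$-power-concave if and only if $[i]\colon L^p(m)\to L^1(m)$ is $\tfrac qp$-concave \emph{and} the identity map $L^p(m)\to L^p(m)$ is $q$-concave, i.e.\ $L^p(m)$ is $q$-concave.

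Granting this reformulation, the implication ``$I_m$ $(p,q)$-power-concave $\Rightarrow$ $L^p(m)$ $q$-concave'' is immediate, so everything reduces to the converse: assuming only that $L^p(m)$ is $q$-concave, I must recover the extra fact that $[i]\colon L^p(m)\to L^1(m)$ is $\tfrac qp$-concave. My plan here is a power-space computation. By Proposition~\ref{PROP: q-concaveCore}, $L^p(m)$ being $q$-concave means exactly $qL^p(m)=L^p(m)$. Writing $L^p(m)=\big(L^1(m)\big)^p$ and invoking Proposition~\ref{PROP: pPower-qX(mu)} with $X(\mu)=L^1(m)$ and exponent $\tfrac qp$ in place of $q$ (so that the $(\tfrac qp\cdot p)$-core of the $p$-power is the $q$-core) gives $qL^p(m)=q\big((L^1(m))^p\big)=\big(\tfrac qp L^1(m)\big)^p$. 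Hence $\big(\tfrac qp L^1(m)\big)^p=(L^1(m))^p$, and since taking $\tfrac1p$-th powers is injective on ideals of $L^0(m)$ (a nonnegative $g$ lies in an ideal $W$ iff $g^{1/p}$ lies in $W^{1/p}$, and $W^{1/p}$ determines $W$), this forces $\tfrac qp L^1(m)=L^1(m)$ as sets, hence with equivalent norms, because $\tfrac qp L^1(m)$ is always contractively contained in $L^1(m)$ and a positive operator between quasi-B.f.s.' is automatically continuous. Since $\tfrac qp L^1(m)$ is $\tfrac qp$-concave (every space $q'X(\mu)$ is $q'$-concave), so is $L^1(m)$; and because $L^p(m)\subseteq L^1(m)$ continuously (Remark~\ref{REM: XpResults}(b), using $p\ge1$ and $\chi_\Omega\in L^1(m)$), the continuous inclusion $[i]\colon L^p(m)\to L^1(m)$ inherits $\tfrac qp$-concavity from the target. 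Theorem~\ref{THM: Xsubset(q/pL1CapqLp)mT} then returns that $I_m\colon L^p(m)\to E$ is $(p,q)$-power-concave, completing the converse.

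I expect the converse to be the main obstacle, and within it the delicate point is the step $\big(\tfrac qp L^1(m)\big)^p=(L^1(m))^p\ \Rightarrow\ \tfrac qp L^1(m)=L^1(m)$: one must apply Proposition~\ref{PROP: pPower-qX(mu)} with the exponents matched so that $\tfrac qp$ (not $qp$) appears, and then upgrade the resulting equality of ideals to a norm equivalence. As a variant avoiding Theorem~\ref{THM: Xsubset(q/pL1CapqLp)mT} at the final step, once $L^1(m)$ is known to be $\tfrac qp$-concave one may instead apply Proposition~\ref{PROP: Im-qConcave} with exponent $\tfrac qp$ to get that $I_m\colon L^1(m)\to E$ is $\tfrac qp$-concave, and then simply restrict that inequality to families $(f_j)\subset L^p(m)\subseteq L^1(m)$; together with the observation that, for $X(\mu)=L^p(m)$, one has $X(\mu)^{1/p}+X(\mu)=L^1(m)+L^p(m)=L^1(m)$ with equivalent norm (since $(L^p(m))^{1/p}=L^1(m)$), this is precisely the $(p,q)$-power-concavity inequality for $I_m$ on $L^p(m)$.
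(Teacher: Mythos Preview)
Your proof is correct. The setup, the identification $m_{I_m}=m_{\chi_\Omega}$ (hence $L^1(m_{I_m})=L^1(m)$), and the forward implication via Theorem~\ref{THM: Xsubset(q/pL1CapqLp)mT} match the paper's argument exactly.

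For the converse the paper takes a shorter path. Instead of deducing $\tfrac{q}{p}L^1(m)=L^1(m)$ from the core machinery and Proposition~\ref{PROP: pPower-qX(mu)}, it simply says ``it is direct to check that $L^1(m)$ is $\tfrac{q}{p}$-concave'': for $(g_j)\subset L^1(m)$ put $f_j=|g_j|^{1/p}\in L^p(m)$ and apply the assumed $q$-concavity of $L^p(m)$; the identities $\|f_j\|_{L^p(m)}^q=\|g_j\|_{L^1(m)}^{q/p}$ and $\big\|(\sum|f_j|^q)^{1/q}\big\|_{L^p(m)}^q=\big\|(\sum|g_j|^{q/p})^{p/q}\big\|_{L^1(m)}^{q/p}$ give the result in one line. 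The paper then finishes exactly as in your ``variant'': since $\big(L^p(m)\big)^{1/p}+L^p(m)=L^1(m)$ and $\|I_m(f)\|_E\le\|f\|_{L^1(m)}$, the $\tfrac{q}{p}$-concavity of $L^1(m)$ yields the $(p,q)$-power-concave inequality directly, without going back through Theorem~\ref{THM: Xsubset(q/pL1CapqLp)mT}. Your route through $qL^p(m)=\big(\tfrac{q}{p}L^1(m)\big)^p$ and the injectivity of $p$-th powers is valid and illustrates the coherence of the core construction, but it is more elaborate than needed here; your final variant is essentially the paper's proof.
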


\begin{proof}
First note that under the hypothesis it follows that $L^p(m)$ has
the $\sigma$-property (in fact $\chi_\Omega\in L^p(m)$) and
$L^p(m)\subset L^1(m)$. So, $I_m\colon L^p(m)\to E$ is well defined
and continuous.

Suppose that $I_m\colon L^p(m)\to E$ is $(p,q)$-power-concave. From
Theorem \ref{THM: Xsubset(q/pL1CapqLp)mT}, we have that $[i]\colon
L^p(m)\to\frac{q}{p}L^1(m_{I_m})\cap qL^p(m_{I_m})$ is well defined.
Note that $(\mathcal{R}^{loc})_{L^p(m)}=\mathcal{R}^{loc}$ and so
$m_{I_m}$ coincides with $m_{\chi_\Omega}$ (see Preliminaries).
Then, $L^1(m_{I_m})=L^1(m)$ and so
$L^p(m)\subset\frac{q}{p}L^1(m)\cap qL^p(m)\subset qL^p(m)$. Hence,
$L^p(m)$ is $q$-concave as $L^p(m)= qL^p(m)$.

Suppose now that $L^p(m)$ is $q$-concave. Then, it is direct to
check that $L^1(m)$ is $\frac{q}{p}$-concave. Since $L^p(m)\subset
L^1(m)$, the integration operator $I_m\colon L^1(m)\to E$ is
continuous and $\big(L^p(m)\big)^{\frac{1}{p}}+L^p(m)=L^1(m)$, it
follows that $I_m\colon L^p(m)\to E$ satisfies the inequality of the
definition of $(p,q)$-power-concave operator.
\end{proof}

Let us go now to the representation of $q$-concave Banach lattices
as spaces of integrable functions. We begin by considering B.f.s.'.

\begin{proposition}\label{PROP: Bfs-Representation}
Let $p,q\in(0,\infty)$ and let $Z(\xi)$ be a $q$-concave B.f.s.\
which is also $p$-convex in the case when $p>1$. Then, $Z(\xi)$
coincides with the space $L^p(m)$ of a Banach space valued vector
measure $m\colon\mathcal{R}\to E$ defined on a $\delta$-ring whose
integration operator $I_m\colon L^1(m)\to E$ is
$\frac{q}{p}$-concave. Moreover, if $\chi_\Omega\in Z(\xi)$, the
vector measure $m$ is defined on a $\sigma$-algebra.
\end{proposition}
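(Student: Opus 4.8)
The plan is to pass to the $\tfrac{1}{p}$-th power of $Z(\xi)$, apply the known vector measure representation of $\sigma$-order continuous Banach function spaces to it, and then take $p$-th powers again. First I would normalize: when $p>1$, after replacing $\Vert\cdot\Vert_{Z(\xi)}$ by an equivalent lattice norm I may assume that the $p$-convexity constant $M^{(p)}[Z(\xi)]$ equals $1$; this is harmless, and in fact forced, since $L^p(m)=\bigl(L^1(m)\bigr)^p$ always has $p$-convexity constant $1$ by Remark~\ref{REM: XpResults}.(c). Put $Y:=Z(\xi)^{1/p}$. Then $Y$ is a B.f.s.: if $p\le 1$ this is Remark~\ref{REM: XpResults}.(d) with exponent $1/p\ge1$ (as $Z(\xi)$ is a B.f.s.), and if $p>1$ it is Remark~\ref{REM: XpResults}.(e), using $M^{(p)}[Z(\xi)]=1$. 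Moreover $Z(\xi)=Y^p$ with equal norms, since $\Vert\,|f|^p\,\Vert_{Y}=\Vert\,|f|\,\Vert_{Z(\xi)}^{p}$ gives $\Vert f\Vert_{Y^p}=\Vert f\Vert_{Z(\xi)}$ for every $f$.

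Next I would verify the two properties of $Y$ that are needed. Being $q$-concave, $Z(\xi)$ is $\sigma$-order continuous by Proposition~\ref{PROP: q-concaveImpliesSigmaoc}, hence so is $Y=Z(\xi)^{1/p}$ by Remark~\ref{REM: XpResults}.(a). The key point is that $Y$ is $\tfrac{q}{p}$-concave: writing $\Vert g\Vert_{Y}=\Vert\,|g|^{1/p}\,\Vert_{Z(\xi)}^{p}$ and substituting $h_j=|g_j|^{1/p}$, a direct computation shows that the $\tfrac{q}{p}$-concavity inequality for $Y$ applied to $(g_j)_{j=1}^n$ is exactly the $q$-concavity inequality for $Z(\xi)$ applied to the nonnegative functions $(h_j)_{j=1}^n$ — this is the same manipulation used, at the level of operators, in the proof of Theorem~\ref{THM: Xsubset(q/pL1CapqLp)mT} to pass between $[i]\colon X(\mu)^{1/p}\to L^1(m_T)$ and $[i]\colon X(\mu)\to L^p(m_T)$. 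Since $q$-concavity may be tested on nonnegative functions only, $Y$ is $\tfrac{q}{p}$-concave, with constant $C^{p}$ where $C$ is the $q$-concavity constant of $Z(\xi)$.

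Now $Y$ is a $\sigma$-order continuous B.f.s., so by the known vector measure representation of such spaces there is a Banach space $E$ and a vector measure $m\colon\mathcal{R}\to E$ defined on a $\delta$-ring with $Y$ order isometric to $L^1(m)$: one uses \cite[Theorem~8]{curbera} when $Y$ has a weak unit, giving $m$ on a $\sigma$-algebra, and \cite[Theorem~5]{delgado-juan} or the case $p=1$ of \cite[Theorem~10]{calabuig-juan-sanchezperez} in general. Taking $p$-th powers and using $Z(\xi)=Y^p$ with equal norms, $Z(\xi)$ is order isometric to $\bigl(L^1(m)\bigr)^p=L^p(m)$. Since this isometry carries the $\tfrac{q}{p}$-concave space $Y$ onto $L^1(m)$, the space $L^1(m)$ is $\tfrac{q}{p}$-concave, and therefore $I_m\colon L^1(m)\to E$ is $\tfrac{q}{p}$-concave by Proposition~\ref{PROP: Im-qConcave} (applied with $\tfrac{q}{p}$ in the role of $q$). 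For the last assertion, if $\chi_\Omega\in Z(\xi)$ then $\chi_\Omega=|\chi_\Omega|^{1/p}\in Z(\xi)^{1/p}=Y$, so $Y$ has a weak unit and $m$ may be taken on a $\sigma$-algebra.

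The power-law bookkeeping in the first two paragraphs is routine. The one step that really needs care is the normalization $M^{(p)}[Z(\xi)]=1$ when $p>1$ — without it $Y$ is only a quasi-B.f.s.\ and cannot coincide with an $L^1(m)$, which is exactly why $p$-convexity of $Z(\xi)$ is imposed precisely in the range $p>1$ — together with the correct invocation, when $Y$ has no weak unit, of the $\delta$-ring representation theorem for $\sigma$-order continuous Banach function spaces recalled in the Introduction.
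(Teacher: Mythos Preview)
Your argument is correct in substance but follows a genuinely different route from the paper's, and there is one small point you glossed over.

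The paper does not invoke any abstract representation theorem for $Y=Z(\xi)^{1/p}$. Instead it \emph{constructs} the vector measure explicitly: it takes $E=Z(\xi)^{1/p}$ itself, $\mathcal{R}=\Sigma_{Z(\xi)}=\{A\in\Sigma:\chi_A\in Z(\xi)\}$, and defines $m(A)=\chi_A$. Since $m$ is positive, $\Vert\varphi\Vert_{L^1(m)}=\Vert I_m(|\varphi|)\Vert_{Z(\xi)^{1/p}}=\Vert\varphi\Vert_{Z(\xi)^{1/p}}$ on simple functions, and a density argument on both sides gives the literal equality $L^1(m)=Z(\xi)^{1/p}$ with equal norms (not merely an order isometry). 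This is more elementary and self-contained, and it makes ``coincides'' in the statement mean actual equality of function spaces over the same measure space; your approach yields only an order isometry, which is enough for the application in Theorem~\ref{THM: BanachLattice-Representation} but is formally weaker than what the paper proves. The explicit construction also makes the final clause immediate: if $\chi_\Omega\in Z(\xi)$ then $\Sigma_{Z(\xi)}=\Sigma$ is a $\sigma$-algebra, with no need to pass through weak units.

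The one genuine gap in your write-up: the representation theorems you cite (\cite[Theorem~8]{curbera}, \cite[Theorem~5]{delgado-juan}) require \emph{order} continuity of the Banach lattice, whereas you only established that $Y$ is $\sigma$-order continuous via Remark~\ref{REM: XpResults}.(a). You need to add that a B.f.s.\ is automatically $\sigma$-complete, so $\sigma$-order continuity upgrades to order continuity by \cite[Proposition~1.a.8]{lindenstrauss-tzafriri}; this is exactly the step taken in the proof of Theorem~\ref{THM: BanachLattice-Representation}. With that fix your argument goes through.
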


\begin{proof}
Note that if $p\le1$ then $Z(\xi)^{\frac{1}{p}}$ is a B.f.s.\ (see
Remark \ref{REM: XpResults}.(d)). In the case when $p>1$, renorming
$Z(\xi)$ if it is necessary, we can assume that the $p$-convexity
constant of $Z(\xi)$ is equal to $1$ (see \cite[Proposition
1.d.8]{lindenstrauss-tzafriri}), and so $Z(\xi)^{\frac{1}{p}}$ is a
B.f.s. (see Remark \ref{REM: XpResults}.(e)). Consider the
$\delta$-ring $\Sigma_{Z(\xi)}=\big\{A\in\Sigma:\,\chi_A\in
Z(\xi)\big\}$ and the finitely additive set function
$m\colon\Sigma_{Z(\xi)}\to Z(\xi)^{\frac{1}{p}}$ given by
$m(A)=\chi_A$. Since $Z(\xi)^{\frac{1}{p}}$ is $\sigma$-order
continuous, as $Z(\xi)$ is so by Proposition \ref{PROP:
q-concaveImpliesSigmaoc}, it follows that $m$ is a vector measure.
Let us see that $L^1(m)=Z(\xi)^{\frac{1}{p}}$ with equal norms and
so we will have that $Z(\xi)$ coincides with $L^p(m)$. For
$\varphi\in\mathcal{S}(\Sigma_{Z(\xi)})$ we have that $\varphi\in
Z(\xi)^{\frac{1}{p}}$ and $I_m(\varphi)=\varphi$. Moreover, since
$m$ is positive,
\begin{equation}\label{EQ: Bfs-Representation}
\Vert\varphi\Vert_{L^1(m)}=\Vert
I_m(|\varphi|)\Vert_{Z(\xi)^{\frac{1}{p}}}=\Vert
\varphi\Vert_{Z(\xi)^{\frac{1}{p}}}.
\end{equation}
In particular, by taking $\varphi=\chi_A$, we obtain that $\Vert
m\Vert$ is equivalent to $\xi$. Given $f\in L^1(m)$, since
$\mathcal{S}(\Sigma_{Z(\xi)})$ is dense in $L^1(m)$, we can take
$(\varphi_n)\subset\mathcal{S}(\Sigma_{Z(\xi)})$ such that
$\varphi_n\to f $ in $L^1(m)$ and $m$-a.e. From \eqref{EQ:
Bfs-Representation}, we have that $(\varphi_n)$ is a Cauchy sequence
in $Z(\xi)^{\frac{1}{p}}$ and so there is $h\in
Z(\xi)^{\frac{1}{p}}$ such that $\varphi_n\to h$ in
$Z(\xi)^{\frac{1}{p}}$. Taking a subsequence $\varphi_{n_j}\to h$
$\xi$-a.e.\ we see that $f=h\in Z(\xi)^{\frac{1}{p}}$ and
$$
\Vert f\Vert_{Z(\xi)^{\frac{1}{p}}}=\lim\Vert
\varphi_n\Vert_{Z(\xi)^{\frac{1}{p}}}=\lim\Vert
\varphi_n\Vert_{L^1(m)}=\Vert f\Vert_{L^1(m)}.
$$
Let now $f\in Z(\xi)^{\frac{1}{p}}$ and take
$(\varphi_n)\subset\mathcal{S}(\Sigma)$ such that
$0\le\varphi_n\uparrow|f|$. For any $n$, writing
$\varphi_n=\sum_{j=1}^m\alpha_j\chi_{A_j}$ with $(A_j)_{j=1}^m$
being pairwise disjoint and $\alpha_j>0$ for all $j$, we see that
$\chi_{A_j}\le\alpha_j^{-1/p}|f|^{1/p}$ and so
$\varphi_n\in\mathcal{S}(\Sigma_{Z(\xi)})$. On other hand, since
$Z(\xi)^{\frac{1}{p}}$ is $\sigma$-order continuous, we have that
$\varphi_n\to f$ in $Z(\xi)^{\frac{1}{p}}$. From \eqref{EQ:
Bfs-Representation}, we have that $(\varphi_n)$ is a Cauchy sequence
in $L^1(m)$ and so there is $h\in L^1(m)$ such that $\varphi_n\to h$
in $L^1(m)$. Taking a subsequence $\varphi_{n_j}\to h$ $m$-a.e.\ we
see that $f=h\in L^1(m)$.

Hence, $L^1(m)=Z(\xi)^{\frac{1}{p}}$ with equal norms and, since
$Z(\xi)$ is $q$-concave, it follows that $L^1(m)$ is
$\frac{q}{p}$-concave. From Proposition \ref{PROP: Im-qConcave}, the
integration operator $I_m\colon L^1(m)\to E$ is
$\frac{q}{p}$-concave.

Note that if $\chi_\Omega\in Z(\xi)$, then $\Sigma_{Z(\xi)}=\Sigma$
and so $m$ is defined on a $\sigma$-algebra.
\end{proof}

For the final result we need some concepts related to Banach
lattices. The definitions of $p$-convexity, $q$-concavity and
$\sigma$-order continuity for Banach lattices are the same that for
B.f.s.'. A Banach lattice $F$ is said to be \emph{order continuous}
if for every downwards directed system $(x_\tau)\subset F$ with
$x_\tau\downarrow0$ it follows that $\Vert x_\tau\Vert_F\downarrow0$
and is said to be \emph{$\sigma$-complete} if every order bounded
sequence in $F$ has a supremum. A Banach lattice which is
$\sigma$-order continuous and $\sigma$-complete at the same time is
order continuous, see \cite[Proposition
1.a.8]{lindenstrauss-tzafriri}. A \emph{weak unit} of a Banach
lattice $F$ is an element $0\le e\in F$ such that $\inf\{x,e\}=0$
implies $x=0$. An operator $T\colon F_1\to F_2$ between Banach
lattices is said to be an \emph{order isometry} if it is linear, one
to one, onto, $\Vert Tx\Vert_{F_2}=\Vert x\Vert_{F_1}$ for all $x\in
F_1$ and $T(\inf\{x,y\})=\inf\{Tx,Ty\}$ for all $x,y\in F_1$. In
particular, an order isometry is a positive operator. So, by using
\cite[Proposition 1.d.9]{lindenstrauss-tzafriri}, it is direct to
check that every order isometry preserves $p$-convexity and
$q$-concavity whenever $p,q\ge1$.

\begin{theorem}\label{THM: BanachLattice-Representation}
Let $p,q\in[1,\infty)$ and let $F$ be a Banach lattice. The
following statements are equivalent:
\begin{itemize}\setlength{\leftskip}{-3ex}\setlength{\itemsep}{.5ex}
\item[(a)] $F$ is $q$-concave and $p$-convex.

\item[(b)] $F$ is order isometric to a space $L^p(m)$ of
a Banach space valued vector measure $m\colon\mathcal{R}\to E$
defined on a $\delta$-ring whose integration operator $I_m\colon
L^1(m)\to E$ is $\frac{q}{p}$-concave.
\end{itemize}
Moreover, (a) holds with $F$ having a weak unit if and only if (b)
holds with $m$ defined on a $\sigma$-algebra. In this last case
$I_m\colon L^p(m)\to E$ is $(p,q)$-power-concave.
\end{theorem}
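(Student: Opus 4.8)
The plan is to prove the two implications separately and then the ``moreover'' part, using Proposition \ref{PROP: Im-qConcave} and the $\delta$-ring representation theorems for order continuous Banach lattices as the bridge between the lattice side and the vector measure side. For (b) $\Rightarrow$ (a) the work is light: $L^1(m)$ is a B.f.s., so by Remark \ref{REM: XpResults}.(c) the $p$-power $L^p(m)=L^1(m)^p$ is $p$-convex; and since $I_m\colon L^1(m)\to E$ is $\frac{q}{p}$-concave, Proposition \ref{PROP: Im-qConcave} makes $L^1(m)$ $\frac{q}{p}$-concave, whence $L^p(m)=L^1(m)^p$ is $q$-concave by the same ``direct check'' already used in the proof of Proposition \ref{PROP: Lp(m)-qConcave} (if a B.f.s.\ $Y$ is $s$-concave then $Y^p$ is $sp$-concave). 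Since $p,q\ge1$, the order isometry of (b) transfers $p$-convexity and $q$-concavity to $F$, as noted before the statement.

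For (a) $\Rightarrow$ (b), the first step is to show that a $q$-concave Banach lattice $F$ (with $q\ge1$) is order continuous. I would mimic the proof of Proposition \ref{PROP: q-concaveImpliesSigmaoc}, replacing the use of a.e.\ convergence (Proposition \ref{PROP: mu-a.e.ConvergenceSubsequence}) by closedness of the positive cone. Applying $q$-concavity to the telescoping increments of a sequence $f_n\downarrow 0$, whose partial sums are dominated by $f_{n_1}$, shows $(f_n)$ is norm-Cauchy; its limit is then a lower bound of $(f_n)$, hence $0$, so $F$ is $\sigma$-order continuous. The same device applied to a norm-bounded increasing sequence $0\le x_n\uparrow$, using the increments $x_{m_k}-x_{n_k}$ over a run of non-overlapping index intervals $[n_k,m_k]$ (whose partial sums are dominated by $x_{m_j}$, hence stay norm-bounded), shows $(x_n)$ is norm-Cauchy, so $F$ is $\sigma$-Dedekind complete; by \cite[Proposition 1.a.8]{lindenstrauss-tzafriri}, $F$ is order continuous. (Alternatively one may quote that a $q$-concave Banach lattice contains no copy of $c_0$.)

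Being order continuous and $p$-convex, $F$ is order isometric to a space $L^p(m)$ of a Banach space valued vector measure $m$ on a $\delta$-ring: this is \cite[Theorem 10]{calabuig-juan-sanchezperez} when $p>1$, and \cite[Theorem 5]{delgado-juan} (with $L^p(m)=L^1(m)$) when $p=1$. Since $p,q\ge1$, this order isometry makes $L^p(m)$ $q$-concave; then $L^1(m)=L^p(m)^{1/p}$ is $\frac{q}{p}$-concave by the direct check quoted above, and Proposition \ref{PROP: Im-qConcave} gives that $I_m\colon L^1(m)\to E$ is $\frac{q}{p}$-concave, which is (b). (Equivalently one can first realize $F$ as an order isometric B.f.s.\ $Z(\xi)$ -- inheriting $q$-concavity and, for $p>1$, $p$-convexity -- and apply Proposition \ref{PROP: Bfs-Representation} directly, which yields $Z(\xi)=L^p(m)$ with $I_m$ already $\frac{q}{p}$-concave.)

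For the ``moreover'', if $F$ has a weak unit the weak-unit versions of the representation theorems (\cite[Proposition 2.4]{fernandez-mayoral-naranjo-saez-sanchezperez} for $p>1$, \cite[Theorem 8]{curbera} for $p=1$; or the ``moreover'' clause of Proposition \ref{PROP: Bfs-Representation} with the weak unit corresponding to $\chi_\Omega$) give $m$ defined on a $\sigma$-algebra, and the argument above applies verbatim. Conversely, if $m$ is defined on a $\sigma$-algebra then $\chi_\Omega\in L^1(m)$, hence $\chi_\Omega=\chi_\Omega^{1/p}\in L^p(m)$, and $\chi_\Omega$ is a weak unit of the ideal $L^p(m)$, so $F$ inherits a weak unit through the order isometry. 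Finally, in the $\sigma$-algebra case $\chi_\Omega\in L^p(m)$, $p\ge1$, and $L^p(m)\cong F$ is $q$-concave, so Proposition \ref{PROP: Lp(m)-qConcave} yields that $I_m\colon L^p(m)\to E$ is $(p,q)$-power-concave. I expect the main obstacle to be the order continuity step -- specifically $\sigma$-Dedekind completeness of $F$ -- together with the bookkeeping required to match the $\delta$-ring versus $\sigma$-algebra and the $p=1$ versus $p>1$ cases of the representation theorems (note the $L^1$-representation cannot be used when $p>1$, as it would force $1$-convexity).
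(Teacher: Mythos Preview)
Your proposal is correct and covers the same ground as the paper, but your primary route for (a) $\Rightarrow$ (b) differs slightly from the paper's. The paper does \emph{not} invoke \cite[Theorem 10]{calabuig-juan-sanchezperez} to get $F\cong L^p(m)$ directly; instead it uses only the $L^1$-representation \cite[Theorem 5]{delgado-juan} (or \cite[Theorem 8]{curbera} in the weak-unit case) to realize $F$ as a B.f.s.\ $L^1(\nu)$, and then applies Proposition \ref{PROP: Bfs-Representation} to that B.f.s.\ to produce $m$ with $I_m$ already $\frac{q}{p}$-concave. This is exactly your parenthetical alternative, and your closing remark that ``the $L^1$-representation cannot be used when $p>1$, as it would force $1$-convexity'' is a misconception: every Banach lattice is $1$-convex, and the point of the $L^1$-step is only to land in a concrete B.f.s.\ so that Proposition \ref{PROP: Bfs-Representation} applies. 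Your direct route via \cite[Theorem 10]{calabuig-juan-sanchezperez} is equally valid and arguably cleaner, since it avoids reproving the $p$-power construction inside Proposition \ref{PROP: Bfs-Representation}; the paper's route has the advantage of needing only one external representation theorem for all $p$. For order continuity the paper simply cites that $q$-concavity gives a lower $q$-estimate and then \cite[Proposition 1.f.5]{lindenstrauss-tzafriri}, which is quicker than your hands-on argument (your argument is fine, though). The (b) $\Rightarrow$ (a) direction and the ``moreover'' part match the paper essentially verbatim.
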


\begin{proof}
(a) $\Rightarrow$ (b) Since $F$ is $q$-concave, it satisfies a lower
$q$-estimate (see \cite[Definition 1.f.4]{lindenstrauss-tzafriri})
and then it is $\sigma$-complete and $\sigma$-order continuous (see
the proof of \cite[Proposition 1.f.5]{lindenstrauss-tzafriri}). So,
$F$ is order continuous. From \cite[Theorem 5]{delgado-juan} we have
that $F$ is order isometric to a space $L^1(\nu)$ of a Banach space
valued vector measure $\nu$ defined on a $\delta$-ring. Then,
$L^1(\nu)$ is a B.f.s.\ satisfying the conditions of Proposition
\ref{PROP: Bfs-Representation} and so $L^1(\nu)=L^p(m)$ with
$m\colon\mathcal{R}\to E$ being a vector measure defined on a
$\delta$-ring $\mathcal{R}$ and with values in a Banach space $E$,
whose integration operator $I_m\colon L^1(m)\to E$ is
$\frac{q}{p}$-concave.

(b) $\Rightarrow$ (a) Since $L^p(m)$ is $p$-convex (Remark \ref{REM:XpResults}.(c)) and $q$-concave (as $L^1(m)$ is
$\frac{q}{p}$-concave by Proposition \ref{PROP: Im-qConcave}), $F$
also is.

Now suppose that (a) holds with $F$ having a weak unit. From
\cite[Theorem 8]{curbera} we have that $F$ is order isometric to a
space $L^1(\nu)$ of a Banach space valued vector measure $\nu$
defined on a $\sigma$-algebra. Since $\chi_\Omega\in L^1(\nu)$, from
Proposition \ref{PROP: Bfs-Representation} we have that (b) holds
with $m$ defined on a $\sigma$-algebra.

Conversely, if (b) holds with $m$ defined on a $\sigma$-algebra then
$\chi_\Omega\in L^p(m)$ (as $\chi_\Omega\in L^1(m)$). So, the image
of $\chi_\Omega$ by the order isometry is a weak unit in $F$.
Moreover, from Proposition \ref{PROP: Lp(m)-qConcave} it follows
that $I_m\colon L^p(m)\to E$ is $(p,q)$-power-concave.
\end{proof}

In particular, from Theorem \ref{THM: BanachLattice-Representation}
we obtain that a Banach lattice is $q$-concave (with $q\ge1$) if and
only if it is order isometric to a space $L^1(m)$ of a vector
measure $m$ with a $q$-concave integration operator.

%-------------------------------------------------------------------------

\end{document}